\DeclareMathAlphabet{\itbf}{OML}{cmm}{b}{it}
\def\by{{{\bf y}}}
\def\bx{{{\bf x}}}
\def\bz{{{\bf z}}}
\def\bpsi{{\boldsymbol{\psi}}}
\def\bpi{{\boldsymbol{\pi}}}
\def\bphi{{\boldsymbol{\varphi}}}
\def\be{{\hat{\itbf e}}}
\def\bu{{{\bf u}}}
\def\bv{{{\bf v}}}
\def\bw{{{\bf w}}}
\def\Bx{{\bf x}}
\def\By{{\bf y}}
\def\bs{{{\bf s}}}
\def\Ba{{\bf a}}
\def\Bb{{\bf b}}
\newcommand{\ds}{\displaystyle}
\newcommand{\RR}{\mathbb{R}}
\newcommand{\C}{\mathcal{C}}
\newcommand{\CC}{\mathbb{C}}
\newcommand{\nm}{\smallskip}
\newcommand{\bp}{\mathbf{p}}
\newcommand{\bg}{\mathbf{g}}
\newcommand{\bnu}{\bm{\nu}}
\newcommand{\OL}{\mathcal{L}}
\newcommand{\bU}{\mathbf{U}}
\newcommand{\bGam}{\mathbf{\Gamma}}
\newcommand{\I}{\mathbf{I}}
\newcommand{\Kcal}{\mathcal{K}}
\newcommand{\NN}{\mathbf{N}}
\newcommand{\bLambda}{\mathbf{\Lambda}}
\newcommand{\OR}{\mathcal{R}}
\newcommand{\bG}{\mathbf{G}}
\newcommand{\br}{\mathbf{r}}
\newcommand{\bfeta}{\boldsymbol{\eta}}
\newcommand{\D}{\mathcal{D}}
\newcommand{\f}{\mathbf{f}}
\newtheorem{thm}{Theorem}[section]
\newtheorem{lem}[thm]{Lemma}
\newcommand{\email}[1]{\protect\href{mailto:#1}{#1}}
\numberwithin{equation}{section}
\newcommand{\pathfigures}{Figures/}
\begin{document}
\title{A Joint Sparse Recovery Framework for Accurate Reconstruction of Inclusions in Elastic Media\thanks{This research was supported by the Ministry of Science, ICT and Future Planning through the National Research Foundation of Korea grants  NRF-2016R1A2B3008104 (to J.Y., J.C.Y., and A.W.), NRF-2014R1A2A1A11052491 (to J.Y., J.C.Y., and A.W.),  NRF-2016R1A2B4014530 (to Y.J., and M.L.), NRF-2015H1D3A1062400 (to A.W. through the Korea Research Fellowship Program) and R\&D Convergence Program of National Research Council of Science and Technology of Korea grant no. CAP-13-3-KERI (to J.Y., J.C.Y., and A.W.).}
}
\author{
Jaejun Yoo\footnotemark[2]\ \footnotemark[4]
\and
Younghoon Jung\footnotemark[3]\ \footnotemark[4]
\and 
Mikyoung Lim\footnotemark[3]\ \footnotemark[5]
\and 
Jong Chul Ye\footnotemark[2]\ \footnotemark[5]
\and 
Abdul Wahab\footnotemark[2]\ \footnotemark[6]
}
\maketitle
\renewcommand{\thefootnote}{\fnsymbol{footnote}}
\footnotetext[2]{Bio Imaging and Signal Processing Lab., Department of Bio and Brain Engineering, Korea Advanced Institute of Science and Technology, 291 Daehak-ro, Yuseong-gu, Daejeon 34141, Republic of Korea (\email{jaejun2004@kaist.ac.kr}; \email{jong.ye@kaist.ac.kr}; \email{wahab@kaist.ac.kr}).}
\footnotetext[3]{Department of Mathematics, Korea Advanced Institute of Science and Technology, 291 Daehak-ro, Yuseong-gu, Daejeon 34141, Republic of Korea (\email{hapy1010@kaist.ac.kr}; \email{mklim@kaist.ac.kr}).}
\footnotetext[4]{J. Yoo and Y. Jung contributed equally to this work and are co-first authors.}
\footnotetext[5]{M. Lim and J. C. Ye are co-corresponding authors.}
\footnotetext[6]{Address all correspondence to A. Wahab  at \email{wahab@kaist.ac.kr}, Ph.:+82-42-3504320, Fax:+82-42-3504310.}
\renewcommand{\thefootnote}{\arabic{footnote}}
\begin{abstract}
A robust algorithm is proposed to reconstruct the spatial support and the  Lam\'e parameters of multiple inclusions in a homogeneous background elastic material using a few measurements of the displacement field over a finite collection of boundary points.   The algorithm does not require any linearization or iterative update of Green's function but still allows very accurate reconstruction. The breakthrough comes from a novel interpretation of Lippmann-Schwinger type integral representation of the displacement field in terms of unknown densities having common sparse support on the location of inclusions.
Accordingly, the proposed algorithm consists of a two-step approach. First, the localization problem is recast as a joint sparse recovery problem that renders the densities and the inclusion support simultaneously. Then, a noise robust constrained optimization problem is formulated for the reconstruction of elastic parameters. 
An efficient algorithm is designed for numerical implementation using the  Multiple Sparse Bayesian Learning (M-SBL) for joint sparse recovery problem and the Constrained Split Augmented Lagrangian Shrinkage Algorithm (C-SALSA) for the constrained optimization problem. The efficacy of the proposed framework is manifested through extensive numerical simulations.  To the best of our knowledge, this is the first algorithm tailored for parameter reconstruction problems in elastic media using highly under-sampled data in the sense of Nyquist rate.
\end{abstract}

\noindent {\footnotesize {\bf AMS subject classifications 2000.} Primary, 35R30, 74B05, 74J20, 78A46; Secondary, 15A29, 45Q05,  65F50,  94A12}

\noindent {\footnotesize {\bf Key words.} elastic medium scattering, elasticity imaging, compressed sensing, joint sparsity, inverse scattering}

\section{Introduction}

Elasticity imaging or elastography is a set of thriving non-invasive imaging techniques that have led to significant improvements  in the quantitative evaluation and visualization of mechanical properties of elastic materials \cite{Princeton,  Greenleaf}. It aims to recover spatial variations in certain material and geometric parameters of structures inside an elastic body from displacement data obtained non-invasively over a part of the boundary surface or inside the body using classical imaging modalities such as ultrasound, magnetic resonance, or speckle interferometry \cite{Oberai, Sinkus00, Sinkus05, Sinkus06}. Different terminologies (static, quasi-static, time-harmonic, and dynamic elasticity imaging) are used to differentiate techniques based on excitation mechanism adapted to probe the underlying elastic body \cite{Parker}.

Elasticity imaging frameworks cater to a broad range of applications, for example,  non-destructive testing of elastic objects for material impurities and structural integrity \cite{Kang16}, exploration geophysics for mineral reservoir prospecting \cite{Tarantola, Weglein}, and medical diagnosis, in particular, for detection and characterization of potential tumors of diminishing sizes \cite{Sinkus00, Sinkus05,  Oberai}. In the perspectives of medical diagnosis  elasticity imaging aims to fathom spatial variations in the material parameters of human tissues by harnessing the interdependence between elastic field  and tissue elasticity. It can be perceived as a modernization of tissue palpation technique that has been used for centuries to identify abnormalities \cite{Sinkus06}.   In fact, the correlation between changes in the stiffness of tissues  with pathological phenomena, such as cirrhosis of the liver \cite{Cirrhosis}, weakening of vessel walls, and recruitment of collagen during tumorigenesis \cite{Garra, Hiltawsky, Wellmann}, has given an impetus to the quantitative  characterization of underlying elastic properties using modern apparatus.  

The inverse problem of quantitative evaluation of constitutive parameters is notorious  for its complexity and ill-posed character. Many dedicated mathematical and computational  algorithms for the reconstruction of location and parameters of anomalies of different geometrical nature (cavities, cracks, and inclusions) have been proposed over the past few decades (see, for instance, \cite{Abbas, TDelastic, TrElastic, Bal14, Bal15a, Bal15b, Barbone, Constantinescu,Kang16,  Eskin, Geymonat, Guchhait, Ikehata, Imanuvilov,  Nakamura94, Nakamura03, Widlak},  the survey articles \cite{Bonnet, Bonnet08}, and the monograph \cite{Princeton}).  Most of the classical techniques are suited to continuous measurements, in other words, to experimental setups allowing to measure continuum deformations inside the elastic body or on a substantial part of its boundary. In practice, this requires mechanical systems that furnish discrete data sampled on a very fine grid confirming to the Nyquist sampling rate. Unfortunately, this is not practically feasible due to mechanical, computational and financial constraints. On the other hand,  several algorithms are based on linearizations with respect to the leading order of the scale factor of inclusions (for instance, asymptotic expansion methods \cite{TDelastic, Garapon}),  or the variations in the constitutive parameters. Born, Rytov and Foldy-Lax type approximations are also adopted. These simplifications are not always valid and are too strong to allow an accurate reconstruction. This results in a dramatic loss of image resolution and quality. The algorithms avoiding such assumptions usually require iterative updates and only a handful of direct reconstruction algorithms can be found in the literature.  Specifically, these techniques are computationally very costly and are highly prone to instabilities as they require computation of numerous forward solutions for iterative updates and suffer from intrinsic ill-posedness of the problem \cite{Monard}. In a nutshell, the existing results found in the literature are clearly not satisfactory from a practical point of view.

In this work,  an accurate novel imaging algorithm is proposed for the reconstruction of multiple inclusions present in a bounded isotropic homogeneous elastic formation. It is assumed that a few measurements of the displacement field over a small finite set of boundary points are available.  For simplicity,  an elastostatic regime is considered, however,  the quasi-static and time-harmonic elasticity problems are amenable to the same treatment with minor changes.
One of the most important features of the proposed algorithm is that it does not require any linearization or iterative update of the Green's function, yet it is felicitous to furnish the spatial support of the inclusions and their material parameters very accurately. The breakthrough comes from a novel interpretation of the Lippmann-Schwinger type integral representation of the displacement field that is derived in terms of unknown densities having jointly sparse spatial support on the location of inclusions.  Therefore, the support identification problem can be recast as a joint sparse recovery problem for the unknown densities given that the support set of inclusions is itself \emph{sparse} inside the elastic formation. This allows invoking a variety of compressed sensing signal recovery algorithms. Consequently, using any one of these algorithms, the solution of the joint sparse recovery problem can be obtained which yields not only the spatial support of the inclusions but also renders the unknown densities. 
The Lam\'e parameters of the inclusions are estimated using recovered densities in the second step of the proposed imaging framework by solving a linear inverse problem in sought parameters.
 
It is worthwhile mentioning that the additional information contained in the recovered densities is linked to the perturbed displacement and strain fields inside the support of the inclusions. The availability of the internal data and the sparsity assumption on the support of the inclusions thus compensate for the lack of over-determined measurements and significantly reduce the ill-posedness of the problem.  In particular, this paves the way to a resolution enhancement since no linearization is applied in the proposed algorithm. Moreover, the numerical implementation of the proposed technique, as will be discussed later on in Section \ref{s:Implement},  does not require multiple forward solutions and is computationally very efficient.  
In fact, similar two-step approaches using joint sparse recovery formulations have been previously developed by our group for inverse scattering problems related to scalar Helmholtz equation \cite{Helmholtz},  diffuse optical tomography \cite{Lee2011CDOT, Lee2013DOT}, and electric impedance tomography \cite{EIT}. An extension to the electromagnetic inverse wave scattering governed by full Maxwell equations is on-going and has also provided very promising preliminary results. This extension will be reported elsewhere. 

An important aspect of this work is the manifestation that even for elastic scattering problems, despite their complexity and ill-posedness,  the derivation of an integral representation in terms of jointly sparse densities is possible that leads to an accurate and stable reconstruction with highly under-sampled data, which is not generally tractable by classical techniques. 
This clearly indicates, in view of the present investigation and the previously obtained results \cite{Helmholtz, Lee2011CDOT, Lee2013DOT}, that the proposed formulation may be so general that it provides a unified reconstruction framework for assorted inverse scattering problems.   

The contents of this article are organized in the following order.  The mathematical formulation of the inverse problem is provided in Section \ref{s:Form}.  The Lippmann-Schwinger type integral representation of the displacement field is derived in Section \ref{s:int}. Section \ref{s:Recon} is dedicated to the joint sparse recovery based reconstruction algorithm. The computational aspects of the algorithm are discussed in Section \ref{s:Implement}. Several numerical experiments are conducted to substantiate the appositeness of the proposed technique in Section \ref{s:NS}. The article ends with a  summary of this investigation and a brief discussion provided in Section \ref{s:conc}.

\section{Mathematical formulation}\label{s:Form}

In this section, the nomenclature and assumptions are specified that are adopted throughout this article and the mathematical formulation of the inverse problem dealt with in this article is provided.

\subsection{Preliminaries and nomenclature}\label{ss:Pre}

Consider an open bounded domain $\Omega\subset\RR^d$, $d=2,3$, with connected $\C^2$-boundary $\partial\Omega$. Let us define $L^2(\Omega)$, $L^\infty(\Omega)$, $H^1(\Omega)$ and   $H^2(\Omega)$ in the usual way endowed with standard norms. Let
 $H^{3/2}(\Omega)$ be defined as the  interpolation space $[H^1(\Omega), H^2(\Omega)]_{1/2}$.  

To facilitate the latter analysis, the subspace $L_{\Psi}^2(\partial\Omega)$ of $L^2(\partial\Omega)^d$ is defined by 
$$
L^2_\Psi(\partial\Omega):=\left\{\bphi\in L^2(\partial\Omega)^d\,\Big|\,\int_{\partial\Omega}\bphi\cdot\bpsi d\sigma=0,\quad \forall \bpsi\in\Psi\right\},
$$
where $d\sigma$ denotes the infinitesimal surface element and  $\Psi$ is the $d(d+1)/2$ dimensional vector space of infinitesimal rigid displacements, i.e.,
$$
\Psi:=\left\{\bpsi\in H^1(\Omega)^d\,\Big|\, \partial_i\psi_j+\partial_j\psi_i=0,\quad 1\leq i,j \leq d\right\}.
$$
It is interesting to note that the vector space $\Psi$ contains constant functions and as a result
$$
\int_{\partial\Omega} \bphi d\sigma=0,\quad\forall \bphi\in L^2_\Psi(\partial\Omega).
$$

Let $\Omega$ be loaded with an isotropic homogeneous elastic material so that its stiffness tensor $\CC^0=(C^0_{ijkl})_{i,j,k,l=1}^d$ is defined by 
\begin{eqnarray*}
C^0_{ijkl}:=\lambda_0\delta_{ij}\delta_{kl}+\mu_0\left(\delta_{ik}\delta_{jl}+\delta_{il}\delta_{jk}\right),
\end{eqnarray*}
where $\delta_{ij}$ is the Kronecker's delta function, and  constants $\lambda_0$ and $\mu_0$ are respectively the compression and shear moduli of the elastic material. It is assumed that these Lam\'e parameters satisfy the conditions
\begin{eqnarray}
\mu_0>0\quad\text{and}\quad d\lambda_0+2\mu_0>0.\label{convexity}
\end{eqnarray}
In fact, the conditions in \eqref{convexity} ensure the strong convexity of the stiffness tensor $\CC^0$. Precisely, for all symmetric matrices $\mathbf{A}\in\RR^{d\times d}\setminus\{\mathbf{0}\}$,  
\begin{eqnarray*}
\left(\CC^0:\mathbf{A}\right):\mathbf{A}\geq \min(2\mu_0, d\lambda_0+2\mu_0) \|\mathbf{A}\|_F^2,
\end{eqnarray*}
where the double dot operator $``:"$ (with varying definitions for $2-$, $3-$, or $4-$rank tensors by abuse of notation) and the Frobenius norm $\|\cdot\|_F$ are defined by 
\begin{align*}
\mathbb{M}:\mathbf{A}:=\bigg(\sum_{k,l=1}^d m_{ijkl}a_{kl}\bigg)_{i,j=1}^d,
\quad
\mathbf{A}:\mathbf{B}:=\sum_{i,j=1}^d a_{ij}b_{ij},
\quad\text{and}\quad
\|\mathbf{A}\|_{F}:=\sqrt{\mathbf{A}:\mathbf{A}},
\end{align*}
for real matrices $\mathbf{A}=(a_{ij})_{i,j=1}^d$ and $\mathbf{B}=(b_{ij})_{i,j=1}^d$, and $4-$ rank tensors $\mathbb{M}=\left(m_{ijkl}\right)_{i,j,k,l=1}^d$.

Suppose that the material loaded in $\Omega$ contains $N\in\mathbb{N}$ open and bounded elastic inclusions $D_n$, $n=1,\cdots,N$, with simply connected  smooth boundaries $\partial D_n$. To simplify the matters, the following assumptions are made throughout in this investigation. 
\begin{enumerate}

\item[\textbf{H1.}] All inclusions are separated apart from $\partial \Omega$ and their closures are mutually disjoint, i.e., there exists a constant $d_0\in\RR_+$ such that 
\begin{equation*}
\inf_{x\in \overline{D_n}}{\rm dist}(x,\partial\Omega)\geq d_0
\quad\text{and}\quad
\inf_{x\in \overline{D_m}}{\rm dist}(x,\overline{D_n})\geq d_0, \quad \forall m\neq n,
\end{equation*}
where \emph{dist} represents the usual distance function in $\RR^d$. 

\item[\textbf{H2.}] Each $D_n$ is isotropic but is allowed to be  inhomogeneous so that its stiffness tensor, $\CC^n(\bx):=(C^n_{ijkl}(\bx))_{i,j,k,l}^d$, is given by 
\begin{eqnarray*}
C^n_{ijkl}(\bx):= \lambda_n(\bx)\delta_{ij}\delta_{kl}+\mu_n(\bx)\left(\delta_{ik}\delta_{jl}+\delta_{il}\delta_{jk}\right),
\qquad
\forall\,\bx\in D_n,
\end{eqnarray*}
in terms of spatially varying Lam\'e parameters $\lambda_n,\mu_n\in L^{\infty}(\Omega)$. 

\item[\textbf{H3.}] There exist constants $\underline{\lambda}_n, \overline{\lambda}_n, \underline{\mu}_n,\overline{\mu}_n\in\RR$ such that 
\begin{eqnarray*}
0<\underline{\mu}_n\leq \mu_n(\bx)\leq \overline{\mu}_n
\quad\text{and}\quad
0< d\underline{\lambda}_n+2\underline{\mu}_n\leq d\lambda_n(\bx)+2\mu_n(\bx)\leq d\overline{\lambda}_n+2\overline{\mu}_n,
\end{eqnarray*}
for all $\bx\in\overline{D_n}$ and $n\in\{1,\cdots,N\}$. By this assumption, It is ensured that, for all symmetric matrices $\mathbf{A}\in\RR^{d\times d}\setminus\{\mathbf{0}\}$ and $\bx\in D_n$,
\begin{eqnarray*}
\max\left(2\overline{\mu}_n, d\overline{\lambda}_n+2\overline{\mu}_n\right)
\|\mathbf{A}\|_{F}^2
\geq \; 
\left(\CC^n(\bx):\mathbf{A}\right):\mathbf{A}
\geq \; 
\min\left(2\underline{\mu}_n, d\underline{\lambda}_n+2\underline{\mu}_n\right) 
\|\mathbf{A}\|_{F}^2.
\end{eqnarray*}

\item[\textbf{H4.}] The degenerate cases are avoided hereinafter by assuming  that there exists a constant $\xi_0\in\RR_+$ so that
\begin{equation*}
(\lambda_0-\lambda_n(\bx))(\mu_0-\mu_n(\bx))\geq \xi_0,
\quad \forall\,\bx\in D_n, \, \,n=1,\cdots, N.
\end{equation*}
The assumption of uniform positivity is required to maintain the positivity of the elastic energy form involved in the Lippmann-Schwinger type representation of the scattered field presented in Section \ref{ss:IntEq}.
By this, it is also ensured that the stiffness tensor,
\begin{eqnarray*}
\CC^\star(\bx):=\CC^0\chi_{\Omega\setminus\cup_{n=1}^N \overline{D_n}}(\bx)+\sum_{n=1}^N \CC^n\chi_{D_n}(\bx), 
\end{eqnarray*}
of $\Omega$ in the presence of inclusions, $D_1,\cdots,D_N$, also satisfies strong convexity condition. Here $\chi_{D_n}$ represents the characteristic function of domain $D_n$. 
\qquad\endproof
\end{enumerate}

To facilitate latter analysis, let us introduce piece-wise defined functions:  
\begin{align*}
\lambda(\bx):=\sum_{n=1}^N \lambda_n(\bx)\chi_{D_n}(\bx),
\quad
\mu(\bx):=\sum_{n=1}^N \mu_n(\bx)\chi_{D_n}(\bx),
\quad\text{and}\quad
\CC(\bx):=\sum_{n=1}^N\CC^n(\bx)\chi_{D_n}(\bx),
\end{align*}
for all $\bx\in \ds\cup_{n=1}^N D_n$. Moreover, the following conventions will be used henceforth. Let $\mathbf{p}=(p_i)_{i=1}^d$, $\mathbf{A}=(a_{ij})_{i,j=1}^d$ and $\mathbf{B}=(b_{ijk})_{i,j,k=1}^d$ be respectively any arbitrary vector, a matrix and  a $3-$rank tensor. Let  $\mathbf{H}:\RR^d \to H^1(\Omega)^{d\times d} $, $\bx\mapsto (h_{ij}(\bx))_{i,j=1}^d$ be any matrix valued function  and $(\be_1,\cdots,\be_d)$ be the standard basis in $\RR^d$. Then, for all  $ i,j,k\in\{1,\cdots,d\}$,
\begin{align*} 
&
\mathbf{A}\cdot\mathbf{p}:=\sum_{i,j} a_{ij}p_j\be_i 
\quad\text{and}\quad
\mathbf{B}:\mathbf{A}:=\sum_{i,j,k} b_{ijk} a_{jk} \be_{i},
\\
&
\nabla\cdot\mathbf{H}(\bx):= \sum_{i,j}\left(\frac{\partial h_{ij}}{\partial x_i}(\bx)\right)\be_j,
\quad
\left[\nabla\mathbf{H}(\bx)\right]_{ijk}:=\frac{\partial h_{ij}}{\partial x_k}(\bx),
\quad\text{and}\quad
\left[\left(\nabla\mathbf{H}(\bx)\right)^\top\right]_{ijk}:=\frac{\partial h_{ik}}{\partial x_j}(\bx),
\end{align*}
where superposed $\top$ indicates the transpose operation and the notation $[\cdot]$ is used to denote any component of a tensor, matrix or vector (e.g., $[\bp]_2=p_2$ for a vector $\bp\in\RR^d$).
The operators ${slice}_1:\{1,\cdots,d\}\times \RR^{d\times d\times d}\to \RR^{d\times d}$ and $vec:\RR^{d\times d}\to \RR^{d^2}$, defined by 
\begin{align*}
&{\rm slice}_1(i,\mathbf{B}):=\left(b_{ijk}\right)_{j,k=1}^d \,\,\text{(for fixed $i$)} 
\quad \text{and}\quad
{\rm vec}(\mathbf{A}):= 
\begin{pmatrix}
\mathbf{a}_{1}^\top, & \cdots, & \mathbf{a}_{d}^\top 
\end{pmatrix}^\top,
\end{align*}
will be useful in the sequel. Here $\mathbf{a}_{i}$ denotes the $i-$th column of matrix $\mathbf{A}$. Finally, 
$$
{\rm diag}(r_1,\cdots,r_\ell)\in \RR^{\ell\times \ell}, \qquad \ell\in\mathbb{N},
$$
will represent a diagonal matrix with diagonal elements $r_1,\cdots, r_\ell\in\RR$.

\subsection{Problem formulation}\label{ss:Prob}

Let $\bu_m:\overline{\Omega}\to\RR^d$, for  each $m=1,\cdots, M\in\mathbb{N}$, be the displacement field in $\Omega$, in the presence of $D_1,\cdots, D_N$,  caused by an applied  surface traction $\bg_m\in L^2_\Psi(\partial\Omega)$ on its boundary $\partial \Omega$. Then, the vector field $\bu_m$ is the solution to 
\begin{equation}
\begin{cases}
\ds\nabla\cdot\left(\CC^\star:\mathcal{E}[\bu_m]\right)= \mathbf{0},  & \text{in}\;\;\Omega,
\\ 
\left(\CC^0:\mathcal{E}[\bu_m]\right)\cdot \bnu=\bg_m,  & \text{on}\;\; \partial \Omega,
\end{cases}
\label{Prob_Trans1}
\end{equation}
where $\bu_m\in H^1(\Omega)$ such that $\bu_m\big|_{\partial\Omega} \in L^2_\Psi(\partial\Omega)$ in order to ensure the existence of a unique weak solution. Here $\bnu$ is the outward unit normal to $\partial \Omega$ and $\mathcal{E}(\bu_m)$ denotes the strain tensor related to $\bu_m$ and is given by 
$
\mathcal{E}[\bu_m]:=(\nabla\bu_m+\nabla\bu_m^\top)/2.
$

For convenience,  the linear isotropic elasticity operator and the corresponding surface traction operator associated with $\CC^0$ will be usually denoted by $\OL_{\lambda_0,\mu_0}$ and ${\partial}/{\partial \bnu}$ respectively, i.e., for any smooth function  $\bw: \RR^d \to \RR^d$, 
\begin{align*}
&\OL_{\lambda_0,\mu_0}[\bw]:= \nabla\cdot\left(\CC^0:\mathcal{E}[\bw]\right),
\\
&\frac{\partial\bw}{\partial \bnu}:=  \left(\CC^0:\mathcal{E}[\bw]\right)\cdot\bnu=\lambda_0(\nabla\cdot\bw)\bnu+2\mu_0\mathcal{E}(\bw)\cdot\bnu.
\end{align*}
Similarly, analogous notation, $\OL_{\lambda,\mu}$ and $\partial/\partial \widetilde{\bnu}$, will be used for operators corresponding to $\CC$ (i.e., to $\lambda$ and $\mu$). Moreover, a \emph{de facto} extension of these operators will be used  for matrix valued functions by invoking conventions listed in Section  \ref{ss:Pre}.

It can be easily verified that the problem \eqref{Prob_Trans1} is equivalent to the  transmission problem 
\begin{equation}
\label{Prob_Trans2}
\begin{cases}
\OL_{\lambda_0,\mu_0}[\bu_m]=0,  & \text{in }\Omega\setminus\cup_{n=1}^N\overline{D_n}, 
\\
\OL_{\lambda,\mu}[\bu_m]=0,  & \text{in } \cup_{n=1}^N{D_n}, 
\\ 
\bu_m{\big|_-}=\bu_m{\big|_+}, & \text{on }  \partial{D_n} \quad (\forall n=1,\cdots,N),
\\
\ds\frac{\partial\bu_m}{\partial\widetilde\bnu}{\Big|_-}=\ds\frac{\partial\bu_m}{\partial\bnu}{\Big|_+}, & \text{on } \partial{D_n}\quad (\forall n=1,\cdots,N),
\\
\ds\frac{\partial\bu_m}{\partial\bnu}=\bg_m, & \text{on }\partial\Omega 
\quad\Big(\bu_m\big|_{\partial\Omega}\in L^2_{\Psi}(\partial\Omega)\Big),
\end{cases}
\end{equation}
where subscripts $+$ and $-$ indicate the limiting values across the interface $\partial D_n$ from outside and from inside $D_n$ respectively, i.e., for any function  $\bw$,
$$
\bw\big|_\pm(\bx):=\lim_{\epsilon\to 0^+} \bw(\bx\pm \epsilon\bnu), \quad \bx\in\partial D_n.
$$

The background displacement field $\bU_m$ in $\Omega$ (in the absence of any inclusion), caused by the surface traction $\bg_m\in L^2_\Psi(\partial\Omega)$ applied on $\partial\Omega$, is also required. The vector field  $\bU_m:\overline{\Omega}\to\RR^d$ is the solution to 
\begin{equation*} 
\begin{cases}
\OL_{\lambda_0,\mu_0}[\bU_m]=0,  & \text{in}\;\; \Omega, 
\\ 
\ds\frac{\partial\bU_m}{\partial\bnu}=\bg_m, &\text{on}\;\;\partial\Omega 
\quad \Big( \bU_m\big|_{\partial\Omega}\in L^2_\Psi(\partial\Omega)\Big).
\end{cases}
\end{equation*}

The following inverse problem is dealt with in this article.  
\subsubsection*{Inverse Problem}
{\em
Let $\{\bx_{r}\}_{r=1}^R\subset\partial\Omega$, for some $R\in\mathbb{N}$, be a finite collection of points on $\partial\Omega$. Let a known traction $\bg_m\in L^2_\Psi(\partial\Omega)$, for each $m\in\{1,\cdots, M\in\mathbb{N}\}$, be applied on $\partial \Omega$ which induces the displacement fields $\bu_m$ and $\bU_m$ in $\Omega$, respectively, with and without the presence of inclusions, $D_1,\cdots,D_N$. Given the set of measurements
$$
\Big\{(\bu_m-\bU_m)(\bx_r)\quad\Big|\quad  r=1,\cdots, R,\; m=1,\cdots, M\Big\},
$$
locate $D_1,\cdots, D_N$ and reconstruct the corresponding Lam\'e parameters, $\lambda_n$ and $\mu_n$, for all $n=1,\cdots, N$. \qquad\endproof
}

 \section{Lippmann-Schwinger type integral representation of perturbed displacement} \label{s:int}

The main ingredient of our reconstruction framework is a Lippmann-Schwinger type integral representation of the perturbations in the displacement fields, $(\bu_m-\bU_m)$, due to the presence of inclusions, $D_1,\cdots, D_N$. In this section, an exact analytic formula is derived for the perturbations $(\bu_m-\bU_m)$ using tools mostly borrowed from the existing literature on integral equations. It is emphasized that similar Lippmann-Schwinger type formulations can be found in the literature; see, e.g., \cite{Snieder}. Nevertheless, the detailed derivation is provided in Section \ref{ss:IntEq} for completeness sake as this formulation is the key component of our algorithm. Towards this end, it is best to pause and recall a few elements from layer potential theory for the linear elastostatic system in Section \ref{ss:BIO}.  The readers interested in further details are invited to read, for instance, the recent monograph \cite{Princeton}.

\subsection{Elements of  layer potential theory for elastostatics}\label{ss:BIO}

Let $\bGam$ denote the Kelvin matrix  of fundamental solutions to the elastostatic system $\OL_{\lambda_0,\mu_0}$ in $\RR^d$, i.e., 
\begin{align}
\OL_{\lambda_0,\mu_0}[\bGam](\bx)= \delta_\mathbf{0}(\bx)\I_d, \qquad\forall\,\bx\in\RR^d,
\label{Gamma}
\end{align} 
where $\I_d\in\RR^{d\times d}$ is the identity matrix and $\delta_{\mathbf{0}}$ is the Dirac mass at $\mathbf{0}$. It is well known that (see, for instance, \cite[Lemma 6.2]{AK-Book}) 
\begin{align*}
\bGam (\bx)=
\begin{cases}
\ds\alpha\ln|\bx|\I_2-\frac{\beta}{|\bx|^2}\bx \bx^\top,  & \text{for}\;\; d=2,
\\ 
\ds-\frac{\alpha}{2|\bx|}\I_3-\frac{\beta}{2|\bx|^3}\bx \bx^\top,  &\text{for}\;\; d=3,
\end{cases}
\end{align*}
where the parameters $\alpha$ and $\beta$ are given by
\begin{equation}
\label{AlphaBeta}
\alpha:=\frac{\lambda_0+3\mu_0}{4\pi\mu_0(\lambda_0+2\mu_0)}
\quad\text{and}\quad
\beta:=\frac{\lambda_0+\mu_0}{4\pi\mu_0(\lambda_0+2\mu_0)}.
\end{equation}
The double layer potential associated with operator $\OL_{\lambda_0,\mu_0}$ in $\Omega$, hereinafter denoted  by $\mathcal{D}_\Omega$, is defined as
\begin{eqnarray*}
\mathcal{D}_\Omega[\bphi](\bx):=\int_{\partial \Omega}\frac{\partial}{\partial\bnu_\by}\bGam(\bx-\by)\cdot\bphi(\by)d\sigma(\by), \qquad \bx\in\RR^d\setminus\partial\Omega,
\end{eqnarray*}
for all  $\bphi\in L^2(\partial\Omega )^d$.  Remind that,   $\mathcal{D}_\Omega[\bphi]$ is a solution to $\OL_{\lambda_0,\mu_0}\big[\mathcal{D}_\Omega[\bphi]\big]=\mathbf{0}$ in $\RR^d\setminus\partial\Omega$ for all $\bphi\in L^2_\Psi(\partial\Omega)$ and the quantities $\mathcal{D}_\Omega[\bphi]\big|_{\pm}$ are well-defined for all $\bx\in\partial\Omega$. In fact, the following jump relations hold (see, e.g., \cite{Dahlberg})  
\begin{align}
\label{Djumps}
\ds\mathcal{D}_\Omega[\bphi]\Big|_{\pm}(\bx)=\left(\mp\frac{1}{2}\mathcal{I}+\mathcal{K}_\Omega\right)\bphi(\bx), \qquad{\rm a.e.}\quad\bx\in\partial \Omega,
\end{align}
where $\mathcal{I}:L^2(\partial\Omega)^d\to L^2(\partial\Omega)^d$ is the identity map and $\Kcal_\Omega$ is the so-called \emph{Neumann-Poincar\'e operator}, defined by 
\begin{equation*}
\Kcal_\Omega[\bphi](\bx) := {\rm p.v.} \ds\int_{\partial \Omega}\frac{\partial}{\partial\bnu_\by}\bGam (\bx-\by)\cdot\bphi(\by)d\sigma(\by),\quad{\rm a.e.}\quad\,\bx\in\partial\Omega,
\end{equation*}
for all $\bphi\in L^2(\partial\Omega)^d$. Here \emph{p.v.} stands for the Cauchy principle value. 

Let $\NN(\cdot,\by):\overline{\Omega}\to\RR^{d\times d}$, for a fixed $\by\in\overline{\Omega}$, be the Neumann function for the background domain $\Omega$ without any inclusion, i.e., the weak solution to  
\begin{equation}
\begin{cases}\label{Neumann2}
\OL_{\lambda_0,\mu_0}[\NN] (\bx,\by) =-\delta_\by(\bx)\I_d,   &  \text{for } \bx\in\Omega ,
\\ 
\ds\frac{\partial\NN}{\partial\bnu_\bx}(\bx,\by)=-\frac{1}{|\partial\Omega|}\I_d,  & \text{for } \bx\in\partial\Omega,
\end{cases}
\end{equation}
subject to the
condition $\NN(\cdot,\by)\in L^2_\Psi(\partial \Omega)$, for all $\by\in\overline{\Omega}$.

The following result from \cite[Lemma 6.16]{AK-Book}  is of great significance in the latter analysis. 
\begin{lem}\label{N-K-Gamma}
For all $\bx\in\partial\Omega$ and $\by\in\Omega$,
\begin{equation*} 
\left(-\ds\frac{1}{2}\mathcal{I}+ \Kcal_\Omega\right) [\NN(\cdot,\by)](\bx)=\bGam(\bx,\by)\quad\text{\rm modulo}\,\,\Psi.
\end{equation*}
\end{lem}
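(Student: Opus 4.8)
The plan is to establish the identity by verifying that the two sides, viewed as functions of $\bx$ on $\partial\Omega$ with $\by\in\Omega$ fixed, arise as boundary traces of the \emph{same} elastostatic field in $\Omega$ (modulo rigid motions). First I would introduce the single layer potential $\mathcal{S}_\Omega$ associated with $\OL_{\lambda_0,\mu_0}$ and recall its standard mapping and jump properties: $\OL_{\lambda_0,\mu_0}[\mathcal{S}_\Omega[\bphi]]=\mathbf{0}$ in $\Omega$, and the conormal jump relation $\frac{\partial}{\partial\bnu}\mathcal{S}_\Omega[\bphi]\big|_{\pm}=(\pm\frac12\mathcal{I}+\mathcal{K}_\Omega^*)\bphi$, where $\mathcal{K}_\Omega^*$ is the $L^2$-adjoint of the Neumann--Poincar\'e operator $\mathcal{K}_\Omega$ appearing in \eqref{Djumps}. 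The key structural fact I would invoke is the symmetry/duality between $\mathcal{S}_\Omega$ applied to the Neumann data and $\mathcal{D}_\Omega$ applied to the boundary trace, together with the defining properties \eqref{Neumann2} of the Neumann function $\NN(\cdot,\by)$: it solves $\OL_{\lambda_0,\mu_0}[\NN(\cdot,\by)]=-\delta_\by\I_d$ in $\Omega$ with constant conormal derivative $-\frac{1}{|\partial\Omega|}\I_d$ on $\partial\Omega$, normalized in $L^2_\Psi(\partial\Omega)$.

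The heart of the argument is Green's identity (Betti's reciprocity formula) for the elastostatic operator, applied to the pair $\NN(\cdot,\by)$ and the Kelvin fundamental solution $\bGam(\cdot-\bz)$ (or more precisely $\bGam(\cdot,\bz)$), for $\bz$ either in $\Omega$ or approaching $\partial\Omega$. Integrating by parts twice over $\Omega$ (with care at the singularities of both kernels, excising small balls around $\by$ and around $\bz$ and letting their radii shrink) produces, on one side, point evaluations generated by the two Dirac masses, and on the other, boundary integrals over $\partial\Omega$ involving the traces and conormal derivatives of the two fields. Because $\frac{\partial\NN}{\partial\bnu}$ is the constant $-\frac{1}{|\partial\Omega|}\I_d$, the corresponding boundary term collapses to an integral of $\bGam$ that is a rigid motion (indeed, constants lie in $\Psi$, and one checks the relevant term is annihilated modulo $\Psi$); the remaining boundary integral is exactly $\mathcal{D}_\Omega[\NN(\cdot,\by)]$ evaluated as a trace, which by the jump relation \eqref{Djumps} gives $\bigl(-\frac12\mathcal{I}+\mathcal{K}_\Omega\bigr)[\NN(\cdot,\by)]$. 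Matching this against the point-evaluation side, which reproduces $\bGam(\bx,\by)$, yields the claimed equality modulo $\Psi$.

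The main obstacle I anticipate is the careful bookkeeping of the rigid-motion ambiguity: every object here — $\NN$, $\bGam$, the layer potentials acting on them — is only defined modulo $\Psi$ or has a prescribed normalization in $L^2_\Psi(\partial\Omega)$, so one must check that the boundary term coming from the constant conormal derivative of $\NN$ genuinely falls in $\Psi$ and that the operator $\bigl(-\frac12\mathcal{I}+\mathcal{K}_\Omega\bigr)$ is well-defined on the relevant quotient. A secondary technical point is justifying the double excision of singularities and showing the surface integrals over the shrinking spheres converge to the correct point values, using the precise asymptotics of $\bGam$ near its pole together with the regularity of $\NN(\cdot,\by)$ away from $\by$. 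Since this lemma is quoted verbatim from \cite[Lemma 6.16]{AK-Book}, I would in practice either cite it directly or reproduce only the Betti-identity computation above, relegating the singularity analysis to the standard references.
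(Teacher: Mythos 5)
You should first note that the paper offers no proof of this lemma at all: it is imported verbatim as \cite[Lemma 6.16]{AK-Book}, so there is no in-paper argument to compare against, and your instinct to either cite it or reproduce the Betti computation is consistent with what the authors do. Your Green's-identity skeleton is the standard route and, carried out (excising balls around the two poles, using $\OL_{\lambda_0,\mu_0}[\NN](\cdot,\by)=-\delta_\by\I_d$, $\OL_{\lambda_0,\mu_0}[\bGam](\cdot-\bz)=\delta_\bz\I_d$ and $\partial\NN/\partial\bnu=-\I_d/|\partial\Omega|$), it gives for $\bz\in\Omega$
\begin{equation*}
\mathcal{D}_\Omega\bigl[\NN(\cdot,\by)\bigr](\bz)=\NN(\bz,\by)+\bGam(\bz-\by)-\frac{1}{|\partial\Omega|}\int_{\partial\Omega}\bGam(\bw-\bz)\,d\sigma(\bw),
\end{equation*}
and hence, by the interior trace in \eqref{Djumps},
\begin{equation*}
\Bigl(-\tfrac{1}{2}\mathcal{I}+\Kcal_\Omega\Bigr)\bigl[\NN(\cdot,\by)\bigr](\bx)=\bGam(\bx-\by)-\frac{1}{|\partial\Omega|}\,\mathcal{S}_\Omega[\I_d](\bx),\qquad \bx\in\partial\Omega .
\end{equation*}

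The genuine gap is your assertion that this leftover boundary term ``is a rigid motion'' and is therefore absorbed modulo $\Psi$. As a function of $\bx\in\partial\Omega$ it is the single layer potential of the constant densities, and $\mathcal{S}_\Omega[\I_d]$ restricted to $\Omega$ is a rigid motion only when the constant densities lie in $\ker(-\tfrac12\mathcal{I}+\Kcal_\Omega^*)$ --- true for disks and balls, false for a general $\mathcal{C}^2$ domain. So under the natural reading (``modulo $\Psi$ in the $\bx$-variable, for each fixed $\by$''), the check you defer would actually fail, and indeed the identity in that strong form cannot hold: pairing both sides against a density $\bphi\in\ker(-\tfrac12\mathcal{I}+\Kcal_\Omega^*)$ whose single layer potential is a non-constant rigid motion produces a contradiction. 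What rescues the lemma, and what the paper actually needs when passing from \eqref{IntEq1} to \eqref{IntEqGam1}, is that the remainder is \emph{independent of} $\by$: viewed as a function of the source point $\by\in\Omega$ it is a constant, constants belong to $\Psi$, and it is killed by the $\by$-derivatives ($\nabla_\by\cdot$ and $\mathcal{E}$ in $\by$) applied to the kernel in Lemma~\ref{LemIntEq2}. Your write-up should therefore either keep the remainder explicit, or state precisely in which variable ``modulo $\Psi$'' is meant. The other technical points you flag are fine: the singularity excision is routine given the asymptotics of $\bGam$, and $(-\tfrac12\mathcal{I}+\Kcal_\Omega)$ is well defined on the quotient by $\Psi$ because $\mathcal{D}_\Omega[\bpsi]\equiv\mathbf{0}$ outside $\overline{\Omega}$ for every $\bpsi\in\Psi$, so the operator annihilates $\Psi|_{\partial\Omega}$.
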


Lemma \ref{N-K-Gamma} indicates that the operator $(-\mathcal{I}/2+\Kcal_\Omega)$ \emph{filters} the effects of an imposed traction condition. It maps the solution of the elastostatic system in $\Omega$ with traction condition on $\partial \Omega$ to a solution of elastostatic system in $\RR^d$ subject to radiation conditions. The operator $(-\mathcal{I}/2+\Kcal_\Omega)$ will be coined as \emph{Calder\'{o}n preconditioner} in the sequel and will be effectively used to design an efficient reconstruction algorithm.

\subsection{Integral representation of perturbed displacement field}\label{ss:IntEq}

To facilitate the ensuing analysis, it is best to recall beforehand the Green's identities corresponding to elastostatic system. These identities can be easily derived using integration by parts and the divergence theorem (see, for instance, \cite[Sec. 9.1]{AK-Book}).
\begin{itemize}
\item If $\bv\in H^1(\Omega)^d$ and $\bw\in H^{3/2}(\Omega)^d$ then 
\begin{eqnarray}
\int_{\partial\Omega}\bv\cdot\frac{\partial \bw}{\partial\bnu} d\sigma = \int_\Omega\bv\cdot\OL_{\lambda_0,\mu_0}[\bw] d\bx+ \mathcal{Q}_\Omega^{\CC^0}(\bv,\bw),\label{GreenID1}
\end{eqnarray}
where the  quadratic (or so-called elastic energy) form $\mathcal{Q}_\Omega^{\CC^0}: H^{1}(\Omega)^d\times H^{1}(\Omega)^d\to \RR$ is defined by 
\begin{align}
\mathcal{Q}_\Omega^{\CC^0}(\bv,\bw):=
&\int_{\Omega}\mathcal{E}[\bv]:\CC^0:\mathcal{E}[\bw]d\bx
\nonumber
\\
=&\int_{\Omega}\Big[\lambda_0(\nabla\cdot\bv)(\nabla\cdot\bw)+2\mu_0(\mathcal{E}[\bv]:\mathcal{E}[\bw])\Big]d\bx.
\label{QuadraticForm}
\end{align}
Once again, the \emph{de facto} extension of $\mathcal{Q}^{\CC^0}_{\Omega}$ is used if one of the arguments is a matrix, in which case it would furnish a vector field.

\item If  $\bv,\bw\in H^{3/2}(\Omega)^d$ then the formula \eqref{GreenID1} leads to another Green's identity 
\begin{eqnarray}
\int_{\partial\Omega} \left(\bv\cdot\frac{\partial\bw}{\partial\bnu}-\bw\cdot\frac{\partial \bv}{\partial\bnu}\right)d\sigma(\bx)
=
\int_\Omega\left(\bv\cdot\OL_{\lambda_0,\mu_0}[\bw]-\bw\cdot\OL_{\lambda_0,\mu_0}[\bv]\right)d\bx.\label{GreenID2}
\end{eqnarray}
It is interesting to note that, by formula \eqref{GreenID2},  if $\bv$ is in addition a solution to $\OL_{\lambda_0,\mu_0}[\bv]=\mathbf{0}$ in $\Omega$ then   ${\partial\bv}/{\partial\bnu}\in L^2_\Psi(\partial\Omega)$.
\end{itemize}

The first important result of this section is the following integral representation of the perturbed displacement field. 
\begin{lem}
\label{LemIntEq1}
For all $\bx\in\overline{\Omega}$,  
\begin{align}
\bu_m(\bx)-\bU_m(\bx)=\int_{\cup_{n=1}^N D_n}\Big[
\big(\lambda_0-\lambda(\by)\big)
&\big(\nabla_\by\cdot\NN(\bx,\by)\big)\big(\nabla\cdot\bu_m(\by)\big)
\nonumber
\\
&
+2(\mu_0-\mu(\by))\mathcal{E}[\NN(\bx,\cdot)](\by):\mathcal{E}[\bu_m](\by)
\Big]d\by.
\label{IntEq1}
\end{align}
\end{lem}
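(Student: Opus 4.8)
The plan is to derive \eqref{IntEq1} by applying the second Green's identity \eqref{GreenID2} on the background domain $\Omega$ with the pair $(\bv,\bw)=(\NN(\bx,\cdot),\bu_m-\bU_m)$, and then converting the resulting boundary and volume terms. First I would record that $\bu_m-\bU_m$ satisfies $\OL_{\lambda_0,\mu_0}[\bu_m-\bU_m]=\OL_{\lambda_0,\mu_0}[\bu_m]$ in $\Omega$ (since $\bU_m$ is $\OL_{\lambda_0,\mu_0}$-harmonic), that $\partial(\bu_m-\bU_m)/\partial\bnu=\mathbf{0}$ on $\partial\Omega$ (both fields have the same Neumann data $\bg_m$), and that $(\bu_m-\bU_m)\big|_{\partial\Omega}\in L^2_\Psi(\partial\Omega)$. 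On the other side, $\NN(\bx,\cdot)$ is $\OL_{\lambda_0,\mu_0}$-harmonic in $\Omega$ away from $\bx$, has the boundary co-normal derivative $-\I_d/|\partial\Omega|$ by \eqref{Neumann2}, and lies in $L^2_\Psi(\partial\Omega)$. Feeding these into \eqref{GreenID2} (used columnwise / in its \emph{de facto} matrix-valued extension), the boundary term involving $\partial(\bu_m-\bU_m)/\partial\bnu$ vanishes, the boundary term $\int_{\partial\Omega}(\bu_m-\bU_m)\cdot(\partial\NN/\partial\bnu)\,d\sigma$ becomes $-\frac{1}{|\partial\Omega|}\int_{\partial\Omega}(\bu_m-\bU_m)\,d\sigma=\mathbf{0}$ because $(\bu_m-\bU_m)\big|_{\partial\Omega}\in L^2_\Psi(\partial\Omega)$, and the source $\OL_{\lambda_0,\mu_0}[\NN](\cdot,\bx)=-\delta_\bx\I_d$ reproduces $(\bu_m-\bU_m)(\bx)$ on the right. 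What remains is
\[
(\bu_m-\bU_m)(\bx)=-\int_\Omega \NN(\bx,\by)\cdot\OL_{\lambda_0,\mu_0}[\bu_m](\by)\,d\by,
\]
so the task reduces to rewriting $\OL_{\lambda_0,\mu_0}[\bu_m]$, which is supported in $\cup_n D_n$.

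Next I would exploit the transmission problem \eqref{Prob_Trans2}: since $\OL_{\lambda,\mu}[\bu_m]=\mathbf{0}$ in $\cup_n D_n$, we have, on $D_n$,
\[
\OL_{\lambda_0,\mu_0}[\bu_m]=\OL_{\lambda_0,\mu_0}[\bu_m]-\OL_{\lambda,\mu}[\bu_m]=\nabla\cdot\big((\CC^0-\CC):\mathcal{E}[\bu_m]\big),
\]
and using the isotropic form of $\CC^0-\CC$ this equals $\nabla\cdot\big[(\lambda_0-\lambda)(\nabla\cdot\bu_m)\I_d+2(\mu_0-\mu)\mathcal{E}[\bu_m]\big]$. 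In the exterior region $\OL_{\lambda_0,\mu_0}[\bu_m]=\mathbf{0}$, so the distributional divergence also produces single-layer contributions on each $\partial D_n$ coming from the jump in co-normal derivative; but by the transmission condition $\partial\bu_m/\partial\widetilde\bnu|_-=\partial\bu_m/\partial\bnu|_+$ together with continuity of $\bu_m$ across $\partial D_n$, these interface jumps cancel, so $\OL_{\lambda_0,\mu_0}[\bu_m]=\nabla\cdot\big((\CC^0-\CC):\mathcal{E}[\bu_m]\big)$ holds as a distribution on all of $\Omega$ with no boundary masses. Substituting and integrating by parts (equivalently, invoking \eqref{GreenID1} on each $D_n$ and summing, with the interface contributions again telescoping via continuity of $\NN(\bx,\cdot)$ and the transmission conditions) moves the divergence onto $\NN(\bx,\cdot)$:
\[
(\bu_m-\bU_m)(\bx)=\int_{\cup_n D_n}\mathcal{E}[\NN(\bx,\cdot)](\by):\big[(\lambda_0-\lambda)(\nabla\cdot\bu_m)\I_d+2(\mu_0-\mu)\mathcal{E}[\bu_m]\big](\by)\,d\by.
\]
Finally I would simplify $\mathcal{E}[\NN(\bx,\cdot)]:\I_d=\operatorname{tr}\mathcal{E}[\NN(\bx,\cdot)]=\nabla_\by\cdot\NN(\bx,\by)$ for the first term, leaving exactly \eqref{IntEq1}.

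I expect the main obstacle to be the careful bookkeeping of the interface terms on the $\partial D_n$: one must check that when Green's identity is applied piecewise (separately on each $D_n$ and on $\Omega\setminus\cup_n\overline{D_n}$), the boundary integrals over $\partial D_n$ contributed by the two sides cancel, which is precisely where the transmission conditions $\bu_m|_-=\bu_m|_+$ and $\partial\bu_m/\partial\widetilde\bnu|_-=\partial\bu_m/\partial\bnu|_+$ (and the continuity and $\OL_{\lambda_0,\mu_0}$-regularity of $\NN(\bx,\cdot)$ across $\partial D_n$) enter. A secondary technical point is the mild singularity of $\NN(\bx,\cdot)$ at $\by=\bx$: for $\bx\in\partial\Omega$ or $\bx$ outside the inclusions the integrand in \eqref{IntEq1} is nonsingular, and for $\bx$ inside some $D_n$ one uses that $\mathcal{E}[\NN(\bx,\cdot)]$ has the locally integrable Kelvin-type singularity of $\bGam$, so all volume integrals converge; the distributional identity $\OL_{\lambda_0,\mu_0}[\NN](\cdot,\bx)=-\delta_\bx\I_d$ handles the reproducing property rigorously. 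Regularity hypotheses (the $\C^2$ boundary, smooth $\partial D_n$, $L^\infty$ Lamé parameters, and $\bu_m\in H^1$) are exactly what is needed to justify the integrations by parts and the trace statements.
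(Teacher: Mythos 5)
Your argument is correct and is in substance the same as the paper's: both rest on the Green identities \eqref{GreenID1}--\eqref{GreenID2} applied with the Neumann function, the transmission conditions on $\partial D_n$, and the reproducing property $\OL_{\lambda_0,\mu_0}[\NN](\cdot,\bx)=-\delta_\bx\I_d$, arriving at the same intermediate identity $(\bu_m-\bU_m)(\bx)=\mathcal{Q}^{\CC^0-\CC}_{\cup_n D_n}(\NN(\bx,\cdot),\bu_m)$. The only difference is bookkeeping --- the paper applies the identities piecewise on $\Omega\setminus\cup_n\overline{D_n}$ and on $\cup_n D_n$, while you package the interface terms into a single distributional identity for $\OL_{\lambda_0,\mu_0}[\bu_m]$; one small wording caveat is that each side of that identity carries the \emph{same} generally nonzero single layer with density $\frac{\partial\bu_m}{\partial\widetilde{\bnu}}\big|_- - \frac{\partial\bu_m}{\partial\bnu}\big|_-$ on $\partial D_n$ rather than ``no boundary masses''; it is their difference that has no surface part, which is all your subsequent integration by parts needs.
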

\begin{proof}
Let us start with the  representation,
$$
\bU_m(\bx)=\int_{\partial\Omega}\NN(\bx,\by)\cdot\bg_m(\by)d\sigma(\by)=\int_{\partial\Omega}\NN(\bx,\by)\cdot\frac{\partial\bu_m}{\partial\bnu}(\by)d\sigma(\by), \quad  \bx\in\overline{\Omega}, 
$$
of the background solution $\bU_m$.
Note also that, by virtue of the imposed boundary condition on $\NN$ and the fact that $\bu_m\big|_{\partial\Omega}\in L^2_{\Psi}(\partial\Omega)$,   
$$
\int_{\partial\Omega}\frac{\partial}{\partial\bnu_\by}\NN(\bx,\by)\cdot\bu_m(\by)d\sigma(\by)=\mathbf{0},\quad\bx\in\overline{\Omega}.
$$
Therefore, the background field can be expressed as
\begin{align*}
\bU_m(\bx)=\int_{\partial\Omega}\left(\NN(\bx,\by)\cdot\frac{\partial\bu_m}{\partial\bnu}(\by)-\frac{\partial}{\partial\bnu_\by}\NN(\bx,\by)\cdot\bu_m(\by)\right)d\sigma(\by), \quad \bx\in\overline{\Omega}.
\end{align*}
Consequently, a simple application of the Green's formula \eqref{GreenID2} over $\Omega\setminus \cup_{n=1}^N \overline{D_n}$ furnishes 
\begin{align}
\bU_m(\bx)=&-\int_{\Omega\setminus\cup_{n=1}^N \overline{D_n}} \Big(\OL_{\lambda_0,\mu_0}[\NN](\bx,\by)\cdot\bu_m(\by)-\NN(\bx,\by)\cdot\OL_{\lambda_0,\mu_0}[\bu_m](\by)\Big)d\by
\nonumber
\\
&+\sum_{n=1}^N\int_{\partial D_n}\left(\NN(\bx,\by)\cdot\frac{\partial\bu_m}{\partial\bnu}(\by)\Big|_{+}-\frac{\partial}{\partial\bnu_\by}\NN(\bx,\by)\cdot\bu_m(\by)\Big|_+\right)d\sigma(\by),\quad\bx\in\overline{\Omega}.\label{eq:1}
\end{align}

Remark that the second term on the right hand side (RHS) of \eqref{eq:1} is identically zero thanks to the first equation in \eqref{Prob_Trans2}.  Moreover,  by invoking the transmission conditions on the displacement $\bu_m$  and its surface traction on the boundaries $\partial D_n$ in \eqref{Prob_Trans2},  one obtains
\begin{align*}
\bU_m(\bx)=&-\int_{\Omega\setminus \cup_{n=1}^N \overline{D_n}}\OL_{\lambda_0,\mu_0}[\NN](\bx,\by)\cdot\bu_m(\by)d\by
\nonumber
\\
&+\sum_{n=1}^N\int_{\partial D_n}\left(\NN(\bx,\by)\cdot\frac{\partial\bu_m}{\partial\widetilde{\bnu}}(\by)\Big|_{-}-\frac{\partial}{\partial\bnu_\by}\NN(\bx,\by)\cdot\bu_m(\by)\Big|_-\right)d\sigma(\by), \quad \bx\in\overline{\Omega}.
\end{align*}
By applying the Green's identity \eqref{GreenID2} once again, but this time over $\cup_{n=1}^N D_n$, one gets
\begin{align}
\bU_m(\bx)=&
-\int_{\Omega}\OL_{\lambda_0,\mu_0}[\NN](\bx,\by)\cdot\bu_m(\by)d\by
+\int_{\cup_{n=1}^N D_n}\NN(\bx,\by)\cdot\OL_{\lambda_0,\mu_0}[\bu_m](\by)d\by
\nonumber
\\
&+\sum_{n=1}^N\int_{\partial D_n}\NN(\bx,\by)\cdot\left(\frac{\partial\bu_m}{\partial\widetilde{\bnu}}(\by)\Big|_{-}-\frac{\partial\bu_m}{\partial\bnu}(\by)\Big|_{-}\right)d\sigma(\by), \quad \bx\in\overline{\Omega}.
\label{eq:Intermid1}
\end{align}
Thanks to the first equation in \eqref{Neumann2} for the Neumann solution, one can easily identify the first term on the RHS of \eqref{eq:Intermid1} as  $\bu_m(\bx)$. Therefore,  
\begin{align}
\bU_m(\bx)-\bu_m(\bx)=&
\int_{\cup_{n=1}^N D_n}\NN(\bx,\by)\cdot\OL_{\lambda_0,\mu_0}[\bu_m](\by)d\by
\nonumber
\\
&
+\sum_{n=1}^N\int_{\partial D_n}\NN(\bx,\by)\cdot\left(\frac{\partial\bu_m}{\partial\widetilde{\bnu}}(\by)\Big|_{-}-\frac{\partial\bu_m}{\partial\bnu}(\by)\Big|_{-}\right)d\sigma(\by), \quad \bx\in\overline{\Omega}.
\label{Eq2.4}
\end{align}

On the other hand, by using Green's identity  \eqref{GreenID1} over $\cup_{n=1}^N D_n$, it is found that
\begin{align*}
\int_{\cup_{n=1}^N D_n}\NN  (\bx,\by) 
\cdot\OL_{\lambda_0,\mu_0}[\bu_m](\by)d\by
=& 
\sum_{n=1}^N\int_{\partial D_n}\NN(\bx,\by)\cdot\frac{\partial\bu_m}{\partial\bnu}(\by)\Big|_{-}d\sigma(\by)
\\
&-\mathcal{Q}_{\cup_{n=1}^N D_n}^{\CC^0}\left(\NN(\bx,\cdot),\bu_m\right), \quad \bx\in\overline{\Omega}.
\end{align*}
Consequently, a few fairly easy manipulations lead us to 
\begin{align*}
\int_{\cup_{n=1}^N D_n}\NN (\bx,\by)\cdot\OL_{\lambda_0,\mu_0}[\bu_m](\by)d\by
=& 
\sum_{n=1}^N\int_{\partial D_n}\NN(\bx,\by)\cdot\left(\frac{\partial\bu_m}{\partial\bnu}(\by)\Big|_{-}-\frac{\partial\bu_m}{\partial\widetilde{\bnu}}(\by)\Big|_{-}\right)d\sigma(\by)
\\
&
-\mathcal{Q}_{\cup_{n=1}^N D_n}^{\CC^0-\CC}(\NN(\bx,\cdot),\bu_m) 
\\
&
+\int_{\cup_{n=1}^N D_n}\NN(\bx,\by)\cdot\OL_{\lambda,\mu}[\bu_m](\by)d\by, \quad \bx\in\overline{\Omega}, 
\end{align*}
wherein the last term vanishes thanks to the second equation in \eqref{Prob_Trans2}. Therefore,  
\begin{align}
\int_{\cup_{n=1}^N D_n}
&\NN (\bx,\by)\cdot\OL_{\lambda_0,\mu_0}[\bu_m](\by)d\by
+\sum_{n=1}^N\int_{\partial D_n}\NN(\bx,\by)\cdot\left(\frac{\partial\bu_m}{\partial\widetilde{\bnu}}(\by)\Big|_{-}-\frac{\partial\bu_m}{\partial\bnu}(\by)\Big|_{-}\right)d\sigma(\by)
\nonumber
\\
=&
-\mathcal{Q}_{\cup_{n=1}^N D_n}^{\CC^0-\CC}(\NN(\bx,\cdot),\bu_m),
\quad \forall \bx\in\overline{\Omega}.\label{eq:3}
\end{align}
Finally, combining \eqref{Eq2.4} and \eqref{eq:3}, it can be seen that 
\begin{align*}
\bu_m(\bx)-\bU_m(\bx)=&\mathcal{Q}_{\cup_{n=1}^N D_n}^{\CC^0-\CC}(\NN(\bx,\cdot),\bu_m), \quad \bx\in\overline{\Omega}.
\end{align*}
This leads to the conclusion together with the definition \eqref{QuadraticForm} of the quadratic form.
\end{proof}

The integral equation \eqref{IntEq1} provides an exact expression for the perturbations in the displacement field. However, it is important to note that the Neumann function $\NN$ does not admit an exact explicit expression (except in a few trivial cases of simple domains such as disks and balls) in contrast to the Kelvin matrix $\bGam$ which is easily accessible. Therefore, although it is exact and valid for all $\bx\in\overline{\Omega}$, the practical utility of the relation \eqref{IntEq1} is restricted.  Nevertheless, a \emph{pre-processing} of the  data using Calder\'on preconditioner is particularly felicitous. In fact, by preconditioning integral equation  \eqref{IntEq1} using $(-\mathcal{I}/2+\Kcal_\Omega)$  and invoking the Lemma \ref{N-K-Gamma},  one can easily obtain an equivalent representation of the \emph{filtered measurements} in terms of the Kelvin matrix. The following result is readily proved thanks to Lemmas \ref{N-K-Gamma}--\ref{LemIntEq1}.
\begin{lem} 
\label{LemIntEq2}
For a.e. $\bx\in\partial\Omega$, 
\begin{align}
\left(-\frac{1}{2}\mathcal{I}+\Kcal_\Omega\right)\left[\bu_m-\bU_m\right](\bx)
=&
\int_{\cup_{n=1}^N D_n}\Big[\big(\lambda_0-\lambda(\by)\big)\big(\nabla_\by\cdot\bGam(\bx,\by)\big)\big(\nabla\cdot\bu_m(\by)\big)
\nonumber
\\
&
+2(\mu_0-\mu(\by))\mathcal{E}[\bGam(\bx,\cdot)](\by):\mathcal{E}[\bu_m](\by)\Big]d\by.  
\label{IntEqGam1}
\end{align}
\end{lem}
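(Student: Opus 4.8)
The plan is to obtain \eqref{IntEqGam1} from the exact representation \eqref{IntEq1} of Lemma~\ref{LemIntEq1} by \emph{preconditioning} it with the Calder\'on operator. Write $T:=-\tfrac{1}{2}\mathcal{I}+\Kcal_\Omega$. Since \eqref{IntEq1} holds for every $\bx\in\overline{\Omega}$, it holds in particular for a.e. $\bx\in\partial\Omega$; applying $T$ in the $\bx$ variable to both sides, the goal is to move $T$ \emph{inside} the integral over $\cup_{n=1}^{N}D_n$ and \emph{past} the operators $\nabla_\by\cdot$ and $\mathcal{E}[\,\cdot\,]$ (taken in $\by$) acting on $\NN(\bx,\by)$, so that $T$ ends up applied to $\bx\mapsto\NN(\bx,\by)$ alone (columnwise, via the \emph{de facto} extension), and then to invoke Lemma~\ref{N-K-Gamma} to replace $T[\NN(\cdot,\by)](\bx)$ by $\bGam(\bx,\by)$.

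For the interchange I would use two facts. First, $\Kcal_\Omega$, and hence $T$, is a bounded linear operator on $L^2(\partial\Omega)^d$, so it commutes with integration of an integrable $L^2(\partial\Omega)^d$-valued family. Second, by assumption \textbf{H1} the closures $\overline{D_n}$ lie at distance at least $d_0>0$ from $\partial\Omega$, so that for $\by\in\cup_{n=1}^{N}D_n$ and $\bx,\bz\in\partial\Omega$ the kernel $\frac{\partial}{\partial\bnu_\bz}\bGam(\bx-\bz)\,\NN(\bz,\by)$ and all of its $\by$-derivatives are smooth and uniformly bounded; this both legitimizes differentiation under the integral sign in the definition of $\Kcal_\Omega$ and shows that the integrand of \eqref{IntEq1}, regarded as an $L^2(\partial\Omega)^d$-valued function of $\by$, is integrable. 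Together these give that $T$ commutes with $\int_{\cup_{n=1}^{N}D_n}(\cdot)\,d\by$, with $\nabla_\by\cdot$, and with $\mathcal{E}[\,\cdot\,]$, so that applying $T$ to \eqref{IntEq1} produces its right-hand side with each $\NN(\bx,\by)$ replaced by $T[\NN(\cdot,\by)](\bx)$.

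By Lemma~\ref{N-K-Gamma}, $T[\NN(\cdot,\by)](\bx)=\bGam(\bx,\by)$ modulo $\Psi$, so substituting the $\bGam$-part reproduces exactly the right-hand side of \eqref{IntEqGam1} (namely $\mathcal{Q}_{\cup_{n=1}^{N}D_n}^{\CC^0-\CC}(\bGam(\bx,\cdot),\bu_m)$ in the notation of \eqref{QuadraticForm}), and everything comes down to showing that the rigid-motion remainder contributes nothing. This, rather than the routine interchange above, is the main obstacle, and I would handle it as follows. Write $T[\NN(\cdot,\by)](\bx)=\bGam(\bx,\by)+\bR(\bx,\by)$, where for each fixed $\by\in\Omega$ the columns of $\bx\mapsto\bR(\bx,\by)$ lie in $\Psi$; the remainder then contributes $\mathcal{Q}_{\cup_{n=1}^{N}D_n}^{\CC^0-\CC}(\bR(\bx,\cdot),\bu_m)$. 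For fixed $\bx\in\partial\Omega$, the map $\by\mapsto\bR(\bx,\by)$ is, columnwise, a smooth solution of $\OL_{\lambda_0,\mu_0}[\,\cdot\,]=0$ in a neighbourhood of $\cup_{n=1}^{N}\overline{D_n}$, since both $\bGam(\bx,\cdot)$ and $T[\NN(\cdot,\cdot)](\bx)$ are homogeneous solutions there (their $\by$-singularities lying on $\partial\Omega$, which \textbf{H1} separates from the inclusions). Reprising the Green's-identity manipulations from the proof of Lemma~\ref{LemIntEq1} with $\NN(\bx,\cdot)$ replaced by $\bR(\bx,\cdot)$ --- using \eqref{GreenID1}--\eqref{GreenID2}, the interior equation $\OL_{\lambda,\mu}[\bu_m]=0$ in $\cup_{n=1}^{N}D_n$, and the transmission conditions in \eqref{Prob_Trans2} --- collapses $\mathcal{Q}_{\cup_{n=1}^{N}D_n}^{\CC^0-\CC}(\bR(\bx,\cdot),\bu_m)$ to boundary integrals over the $\partial D_n$, and then over $\partial\Omega$, that cancel, because $\bR(\bx,\cdot)$ has continuous trace and traction across each $\partial D_n$ and because $\bu_m|_{\partial\Omega}$ and $\bU_m|_{\partial\Omega}$ belong to $L^2_\Psi(\partial\Omega)$. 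Hence the remainder drops out and \eqref{IntEqGam1} holds for a.e. $\bx\in\partial\Omega$; this vanishing of the $\Psi$-remainder is the delicate point, and the one place where the structure of \eqref{Prob_Trans2} and the constraint $\bu_m|_{\partial\Omega}\in L^2_\Psi(\partial\Omega)$ enter essentially.
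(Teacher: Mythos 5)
Your overall route --- apply $T:=-\tfrac12\mathcal{I}+\Kcal_\Omega$ to \eqref{IntEq1}, commute it with the volume integral and the $\by$-derivatives, and invoke Lemma \ref{N-K-Gamma} --- is exactly the paper's (which dispatches the lemma in one sentence, ``readily proved thanks to Lemmas \ref{N-K-Gamma}--\ref{LemIntEq1}''), and your justification of the interchange via the boundedness of $\Kcal_\Omega$ on $L^2(\partial\Omega)^d$ and the separation constant $d_0$ of \textbf{H1} is fine. The gap is in the one step you yourself flag as delicate: the disposal of the rigid-motion remainder $\bR(\bx,\by):=T[\NN(\cdot,\by)](\bx)-\bGam(\bx,\by)$.

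Your argument that $\mathcal{Q}^{\CC^0-\CC}_{\cup_{n}D_n}(\bR(\bx,\cdot),\bu_m)=0$ uses only that $\bR(\bx,\cdot)$ is a smooth homogeneous solution of $\OL_{\lambda_0,\mu_0}$ near $\cup_{n}\overline{D_n}$, the transmission conditions of \eqref{Prob_Trans2}, and the $L^2_\Psi$ normalizations on $\partial\Omega$. But $\NN(\bx,\cdot)$ and $\bGam(\bx,\cdot)$ enjoy every one of these properties, and for them the very same quadratic form equals $(\bu_m-\bU_m)(\bx)$ (Lemma \ref{LemIntEq1}) and $T[\bu_m-\bU_m](\bx)$ (the identity being proved), neither of which vanishes in general; so the Green's-identity manipulation you describe cannot by itself force the remainder to zero --- it would prove too much. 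The deeper issue is your reading of ``modulo $\Psi$'': you take $\bx\mapsto\bR(\bx,\by)$ to be columnwise a rigid displacement for each fixed $\by$. Since $\mathcal{Q}^{\CC^0-\CC}_{\cup_{n}D_n}(\bR(\bx,\cdot),\bu_m)$ involves only $\by$-derivatives of $\bR$, that reading gives no control whatsoever over the term you must kill (arbitrary $\by$-dependence of the coefficients would survive $\nabla_\by\cdot$ and $\mathcal{E}_\by$, and \eqref{IntEqGam1} would then hold only modulo $\Psi$, which is not what is claimed). The missing observation is that the $\Psi$-ambiguity in Lemma \ref{N-K-Gamma} lives in the \emph{interior} variable $\by\in\Omega$ --- this is where $\Psi\subset H^1(\Omega)^d$ is defined, and it originates in the $L^2_\Psi$ normalization of the interior Neumann problem. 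Once one knows that $\by\mapsto\bR(\bx,\by)$ is columnwise an infinitesimal rigid displacement, the conclusion is immediate and needs no Green's identities: every $\bpsi\in\Psi$ satisfies $\mathcal{E}[\bpsi]=0$ and $\nabla\cdot\bpsi=0$, so $\bR$ is annihilated by the two operators $\nabla_\by\cdot$ and $\mathcal{E}_\by[\cdot]$ acting on $\NN(\bx,\cdot)$ in \eqref{IntEq1}, and the remainder contributes nothing. (Alternatively, the exact --- not merely modulo-$\Psi$ --- identity \eqref{IntEqGam1} follows without Lemma \ref{N-K-Gamma} at all, by letting $\bx$ tend to $\partial\Omega$ from inside in \eqref{IntEqGam2} and using the jump relation $\mathcal{D}_\Omega[\bphi]\big|_-=\bigl(\tfrac12\mathcal{I}+\Kcal_\Omega\bigr)[\bphi]$ from \eqref{Djumps}.)
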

It is emphasized that, however the integral equation \eqref{IntEq1} is valid for all $\bx\in\overline{\Omega}$, its preconditioned counterpart \eqref{IntEqGam1} is only valid for a.e. $\bx\in\partial\Omega$. In order to derive an alternative integral equation that is valid for all  $\bx\in\Omega$, one can exploit the double layer potential $\mathcal{D}_\Omega$ in view of the jump relations \eqref{Djumps}. In fact, the following result holds.

\begin{lem}
\label{LemIntEq3}
 For all $\bx\in\Omega$,
\begin{align}
\bu_m(\bx)-\bU_m(\bx)=& \mathcal{D}_\Omega\left[(\bu_m-\bU_m)\big|_{\partial\Omega}\right](\bx)-\int_{\cup_{n=1}^N D_n}\Big[\big(\lambda_0-\lambda(\by)\big)\big(\nabla_\by\cdot\bGam(\bx,\by)\big)\big(\nabla\cdot\bu_m(\by)\big)
\nonumber
\\
&
+2(\mu_0-\mu(\by))\mathcal{E}[\bGam(\bx,\cdot)](\by):\mathcal{E}[\bu_m](\by)\Big]d\by.
\label{IntEqGam2}
\end{align}
\end{lem}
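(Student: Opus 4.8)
The plan is to combine Lemma~\ref{LemIntEq2} with the jump relations \eqref{Djumps} for the double layer potential. The key observation is that the left-hand side of \eqref{IntEqGam1}, namely $(-\tfrac12\mathcal{I}+\Kcal_\Omega)[\bu_m-\bU_m]$, is exactly the interior trace $\mathcal{D}_\Omega[(\bu_m-\bU_m)|_{\partial\Omega}]|_{-}$ of the double layer potential with density $(\bu_m-\bU_m)|_{\partial\Omega}$. So the first step is to verify that $(\bu_m-\bU_m)|_{\partial\Omega}\in L^2_\Psi(\partial\Omega)$: indeed, both $\bu_m$ and $\bU_m$ satisfy $\OL_{\lambda_0,\mu_0}[\cdot]=\mathbf 0$ in $\Omega\setminus\cup_n\overline{D_n}$ with the same Neumann data $\bg_m$ on $\partial\Omega$, and both boundary traces lie in $L^2_\Psi(\partial\Omega)$ by the well-posedness conditions in \eqref{Prob_Trans2}; hence so does their difference. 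This is what makes $\mathcal{D}_\Omega[(\bu_m-\bU_m)|_{\partial\Omega}]$ and its jump relation \eqref{Djumps} legitimate.

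Second, I would introduce the auxiliary function
\begin{align*}
\bw_m(\bx):=\mathcal{D}_\Omega\big[(\bu_m-\bU_m)|_{\partial\Omega}\big](\bx)-\int_{\cup_{n=1}^N D_n}\Big[\big(\lambda_0-\lambda(\by)\big)\big(\nabla_\by\cdot\bGam(\bx,\by)\big)\big(\nabla\cdot\bu_m(\by)\big)+2(\mu_0-\mu(\by))\mathcal{E}[\bGam(\bx,\cdot)](\by):\mathcal{E}[\bu_m](\by)\Big]d\by,
\end{align*}
defined for $\bx\in\Omega$, and show that $\bw_m=\bu_m-\bU_m$ there. Both sides are solutions of $\OL_{\lambda_0,\mu_0}[\cdot]=\mathbf{0}$ in $\Omega\setminus\cup_n\overline{D_n}$ (for $\bu_m-\bU_m$ this is immediate; for $\bw_m$ the double layer term is a solution away from $\partial\Omega$, and the volume integral over $\cup_n D_n$ is a solution at points $\bx\notin\cup_n\overline{D_n}$ since $\bGam(\bx,\cdot)$ is then smooth and $\OL$-harmonic in that region). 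However, since both sides also have singular/transmission behaviour across the $\partial D_n$, the cleanest route is a trace argument: compute the interior boundary trace of $\bw_m$ on $\partial\Omega$ using \eqref{Djumps}, which gives $\big(-\tfrac12\mathcal{I}+\Kcal_\Omega\big)\big[(\bu_m-\bU_m)|_{\partial\Omega}\big](\bx)$ minus the same volume integral evaluated at $\bx\in\partial\Omega$; by Lemma~\ref{LemIntEq2} this difference is $\mathbf 0$. So $\bw_m$ has vanishing trace on $\partial\Omega$. Likewise one checks the Neumann trace $\partial\bw_m/\partial\bnu$ on $\partial\Omega$ vanishes (the double layer potential of an $L^2_\Psi$ density contributes $\mathbf 0$ Neumann data modulo rigid motions, and the volume term is regular up to $\partial\Omega$ and can be shown to match), so that $\bw_m-(\bu_m-\bU_m)$ solves a homogeneous elastostatic problem with zero Cauchy data on $\partial\Omega$ in the outer region and the matching transmission conditions across $\partial D_n$; uniqueness then forces $\bw_m=\bu_m-\bU_m$ throughout $\Omega$.

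Alternatively — and this is probably the shortest write-up — I would observe that \eqref{IntEq1} of Lemma~\ref{LemIntEq1} already expresses $\bu_m-\bU_m$ in terms of the Neumann function $\NN$, and that by the third line of \eqref{Neumann2} together with Lemma~\ref{N-K-Gamma} one has the decomposition $\NN(\bx,\by)=\bGam(\bx,\by)+\bH_m(\bx,\by)$ with the correction term accounting for the boundary condition; feeding this splitting into \eqref{IntEq1}, the $\bGam$-part yields the volume integral in \eqref{IntEqGam2} and the remainder assembles into the double layer potential $\mathcal{D}_\Omega[(\bu_m-\bU_m)|_{\partial\Omega}]$ after recognising it via \eqref{Djumps} and the trace identity in Lemma~\ref{LemIntEq2}. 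The main obstacle in either approach is bookkeeping the boundary-integral identities modulo $\Psi$: Lemma~\ref{N-K-Gamma} holds only modulo rigid displacements, and one must argue that the ambiguity is harmless because $\mathcal{D}_\Omega$ annihilates the $L^2_\Psi$-orthogonal complement and all the fields in play have traces in $L^2_\Psi(\partial\Omega)$, so the $\Psi$-indeterminacy drops out of the final representation. The rest is routine verification that the volume integral has the same regularity up to $\partial\Omega$ as the double layer term so that the interior trace computation is valid.
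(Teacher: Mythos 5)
The paper proves this lemma by direct computation: it expands $\mathcal{D}_\Omega[(\bu_m-\bU_m)|_{\partial\Omega}]$ as a boundary integral, uses the fact that $\bu_m$ and $\bU_m$ share the traction data $\bg_m$ to split it into the two Cauchy-data integrals, identifies the $\bU_m$-part as $\bU_m(\bx)$ by Green's representation, and evaluates the $\bu_m$-part by applying \eqref{GreenID2} over $\Omega\setminus\cup_n\overline{D_n}$ and over $\cup_n D_n$ with the transmission conditions of \eqref{Prob_Trans2} --- essentially rerunning the proof of Lemma \ref{LemIntEq1} with $\bGam$ in place of $\NN$. Your trace-and-uniqueness strategy is a genuinely different route, and it could in principle work, but as written it contains two concrete errors that prevent it from closing. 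First, the sign of the interior trace: by \eqref{Djumps} (where the subscript $-$ is the limit from inside), $\mathcal{D}_\Omega[\bphi]\big|_{-}=\left(+\tfrac{1}{2}\mathcal{I}+\Kcal_\Omega\right)\bphi$, not $\left(-\tfrac{1}{2}\mathcal{I}+\Kcal_\Omega\right)\bphi$. With the correct sign, Lemma \ref{LemIntEq2} gives $\bw_m\big|_{\partial\Omega}=\left(\tfrac{1}{2}\mathcal{I}+\Kcal_\Omega\right)[\bu_m-\bU_m]-\left(-\tfrac{1}{2}\mathcal{I}+\Kcal_\Omega\right)[\bu_m-\bU_m]=(\bu_m-\bU_m)\big|_{\partial\Omega}$, so your intermediate claim that ``$\bw_m$ has vanishing trace'' is false (and inconsistent with the identity you are trying to prove, since $\bu_m-\bU_m$ does not vanish on $\partial\Omega$). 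The salvageable statement is that the \emph{difference} $\bw_m-(\bu_m-\bU_m)$ has vanishing Dirichlet trace.

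Second, and more seriously, the Neumann-trace step is unjustified: the conormal derivative of a double layer potential is \emph{continuous} across $\partial\Omega$, but it is not zero, even for densities in $L^2_\Psi(\partial\Omega)$; so the assertion that $\mathcal{D}_\Omega$ ``contributes $\mathbf{0}$ Neumann data modulo rigid motions'' is wrong, and you have not shown that the Neumann traces of the double layer term and the volume term cancel. Without matching Neumann data you only know that $\bw_m-(\bu_m-\bU_m)$ is $\OL_{\lambda_0,\mu_0}$-harmonic in the annular region $\Omega\setminus\cup_n\overline{D_n}$ with zero Dirichlet data on the \emph{outer} boundary, which does not force it to vanish; alternatively you would have to verify that $\bw_m+\bU_m$ satisfies the full transmission problem \eqref{Prob_Trans1} inside the inclusions (i.e.\ compute $\OL_{\lambda_0,\mu_0}$ of the volume potential across $\partial D_n$), which you do not do. Your ``alternative'' route via the splitting $\NN=\bGam+\mathbf{H}$ is only a sketch: the claim that the regular part of the kernel ``assembles into'' $\mathcal{D}_\Omega[(\bu_m-\bU_m)|_{\partial\Omega}]$ is precisely the content of the lemma and is asserted rather than derived. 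The most economical fix is to abandon the uniqueness argument and do what the paper does: compute $\mathcal{D}_\Omega[(\bu_m-\bU_m)|_{\partial\Omega}]$ directly with Green's identities.
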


\begin{proof}
Note that, for all $\bx\in \Omega$, 
\begin{align}
\mathcal{D}_\Omega\left[(\bu_m-\bU_m)\big|_{\partial\Omega}\right](\bx)
= & 
\ds\int_{\partial\Omega} \frac{\partial\bGam}{\partial\bnu_\by}(\bx,\by)\cdot\Big(\bu_m(\by)-\bU_m(\by)\Big)d\sigma(\by)
\nonumber
\\
\nm
=&\ds\int_{\partial\Omega} \left(\frac{\partial\bGam}{\partial\bnu_\by}(\bx,\by)\cdot\bu_m(\by)-\bGam(\bx,\by)\cdot\frac{\partial\bu_m(\by)}{\partial\bnu}\right) d\sigma(\by)
\nonumber
\\
&
\ds
-\int_{\partial\Omega} \left(\frac{\partial\bGam}{\partial\bnu_\by}(\bx,\by)\cdot\bU_m(\by)-\bGam(\bx,\by)\cdot\frac{\partial\bU_m(\by)}{\partial\bnu}\right) d\sigma(\by),
\label{D1} 
\end{align}
where the fact that $\bu_m$ and $\bU_m$ satisfy same traction boundary conditions on $\partial\Omega$ is exploited. It can be easily verified, using the Green's identity \eqref{GreenID2}, that
\begin{align}
\int_{\partial\Omega} \left(\frac{\partial\bGam}{\partial\bnu_\by}(\bx,\by)\cdot\bU_m(\by)-\bGam(\bx,\by)\cdot\frac{\partial\bU_m(\by)}{\partial\bnu}\right) d\sigma(\by) = \bU_m(\bx),\qquad\bx\in \Omega,
\label{D2}
\end{align}
since $\Omega$ is a $\mathcal{C}^2-$ domain.
On the other hand, using \eqref{GreenID2} over $\Omega\setminus\cup_{n=1}^N\overline{D_n}$, one obtains
\begin{align*}
\mathbf{T}(\bx):=&\int_{\partial\Omega} \left(\frac{\partial\bGam}{\partial\bnu_\by}(\bx,\by)\cdot\bu_m(\by)-\bGam(\bx,\by)\cdot\frac{\partial\bu_m(\by)}{\partial\bnu}\right) d\sigma(\by) 
\\
= &
\int_{\Omega\setminus \cup_{n=1}^N \overline{D_n}}\Big(\OL_{\lambda_0,\mu_0}[\bGam](\bx,\by)\cdot\bu_m(\by)- \bGam(\bx,\by)\cdot\OL_{\lambda_0,\mu_0}[\bu_m](\by)\Big)d\by
\\
&-\sum_{n=1}^N\int_{\partial D_n}\left(\bGam(\bx,\by)\cdot\frac{\partial\bu_m}{\partial\bnu}(\by)\Big|_{+}-\frac{\partial\bGam}{\partial\bnu_\by}(\bx,\by)\cdot\bu_m(\by)\Big|_+\right)d\sigma(\by),\quad\bx\in {\Omega}.
\end{align*}
By making use of the system \eqref{Prob_Trans2}, one gets
\begin{align*}
\mathbf{T}(\bx)=& \int_{\Omega\setminus \cup_{n=1}^N \overline{D_n}}\OL_{\lambda_0,\mu_0}[\bGam](\bx,\by)\cdot\bu_m(\by)d\by
\nonumber
\\
&-\sum_{n=1}^N\int_{\partial D_n}\left(\bGam(\bx,\by)\cdot\frac{\partial\bu_m}{\partial\widetilde{\bnu}}(\by)\Big|_{-}-\frac{\partial\bGam}{\partial\bnu_\by}(\bx,\by)\cdot\bu_m(\by)\Big|_-\right)d\sigma(\by), \quad \bx\in {\Omega}.
\end{align*}
Subsequently, by applying Green's identity \eqref{GreenID2} over $\cup_{n=1}^N D_n$, one arrives at
\begin{align}
\mathbf{T}(\bx)=& \int_{\Omega}\OL_{\lambda_0,\mu_0}[\bGam](\bx,\by)\cdot\bu_m(\by)d\by
-\int_{\cup_{n=1}^N D_n}\bGam(\bx,\by)\cdot\OL_{\lambda_0,\mu_0}[\bu_m](\by)d\by
\nonumber
\\
&-\sum_{n=1}^N\int_{\partial D_n}\bGam(\bx,\by)\cdot\left(\frac{\partial\bu_m}{\partial\widetilde{\bnu}}(\by)\Big|_{-}-\frac{\partial\bu_m}{\partial\bnu}(\by)\Big|_{-}\right)d\sigma(\by), \quad \bx\in {\Omega}.
\label{D4}
\end{align}
One can easily identify the first term on the RHS of \eqref{D4} as  $\bu_m(\bx)$, thanks to \eqref{Gamma}. Moreover, fairly simple arguments, similar to those in the proof of Lemma \ref{LemIntEq1}, lead us to 
\begin{align}
\int_{\cup_{n=1}^N D_n}
&\bGam (\bx,\by)\cdot\OL_{\lambda_0,\mu_0}[\bu_m](\by)d\by
+\sum_{n=1}^N\int_{\partial D_n}\bGam(\bx,\by)\cdot\left(\frac{\partial\bu_m}{\partial\widetilde{\bnu}}(\by)\Big|_{-}-\frac{\partial\bu_m}{\partial\bnu}(\by)\Big|_{-}\right)d\sigma(\by)
\nonumber
\\
=&
-\mathcal{Q}_{\cup_{n=1}^N D_n}^{\CC^0-\CC}(\bGam(\bx,\cdot),\bu_m),
\quad \forall \bx\in{\Omega}.\label{D6}
\end{align}
Therefore, by using the expression \eqref{D6} in \eqref{D4}, it is found that
\begin{align}
\mathbf{T}(\bx)= \bu_m(\bx)+ \mathcal{Q}_{\cup_{n=1}^N D_n}^{\CC^0-\CC}(\bGam(\bx,\cdot),\bu_m),
\quad \forall \bx\in{\Omega}.\label{D3}
\end{align} 
Finally, using \eqref{D2} and \eqref{D3} in \eqref{D1}, one arrives at 
\begin{align*}
\mathcal{D}_\Omega\left[\left(\bu_m-\bU_m\right)\big|_{\partial\Omega} \right](\bx) = \bu_m(\bx)-\bU_m(\bx)+ \mathcal{Q}_{\cup_{n=1}^N D_n}^{\CC^0-\CC}(\bGam(\bx,\cdot),\bu_m), \qquad\forall \bx\in\Omega,
\end{align*}
which renders the expression \eqref{IntEqGam2} by virtue of the definition \eqref{QuadraticForm}.
\end{proof}

The integral equations \eqref{IntEqGam1} and \eqref{IntEqGam2} are the key components of  our proposed algorithm.  
For completeness, the analytic expressions for different kernels involved in \eqref{IntEqGam1} and \eqref{IntEqGam2} are provided in Appendix \ref{Append}.
However, the presence of $\bu_m$ on their right hand sides is problematic since $\bu_m$  is only available  on $\partial \Omega$ \emph{\`{a} priori}. 
Different approaches based on the linearized versions of \eqref{IntEq1} and \eqref{IntEqGam1}  or on Born and Rytov type approximations of  $\bu_m$ are available in the literature.  Unfortunately, these simplifications are not always valid and become too strong to allow an accurate reconstruction.  However, as will be shown in Section \ref{s:Recon}, a joint sparsity based reformulation of \eqref{IntEqGam1} in terms of unknown densities is possible if the inclusions, $D_1,\cdots, D_N$, are compactly supported and \emph{sufficiently localized} inside $\Omega$, i.e., the support set $\cup_{n=1}^n D_n$ is \emph{sparse} in $\Omega$. Consequently, linearization or approximations can be avoided. It is also elaborated how this allows us to recover support set $\cup_{n=1}^n D_n$ and the Lam\'{e} parameters $(\lambda_n,\mu_n)$ without any linearization or iterative update when multiple measurements are available. 
\section{Joint sparse reconstruction framework}\label{s:Recon}

In this section, the inverse problem for spatial localization of inclusions is recast to a joint sparse recovery problem and it is shown that the problem for quantitative evaluation of Lam\'{e} parameters becomes linear in sought parameters  if the underlying inclusions are sparsely embedded in elastic formation.
In particular, the integral representation \eqref{IntEqGam1}  for multiple perturbed displacement fields, corresponding to different applied boundary forces $\bg_1,\cdots,\bg_M$, will be reformulated in terms of jointly sparse densities in Section \ref{ss:JSP}. This will allow us to invoke compressed sensing algorithms for so-called \emph{multiple measurement vector} problems for sparse signal recovery (see, for instance, \cite{cotter2005ssl, Kim2010CMUSIC, tropp2006ass2, wipf2007ebs}), thereby furnishing the unknown densities inside the support set $\cup_{n=1}^N D_n$.   Furthermore, in Section \ref{ss:Parameters},  it is established using the recovered densities together with \eqref{IntEqGam2} that the  accurate estimation of the displacement and the strain inside the inclusions is possible. Consequently, the inverse problem for parameter reconstruction becomes linear. The issues related to the discretization of these formulations and the imaging procedure using finite discrete measurements will be discussed in Section \ref{ss:discrete}.

\subsection{Integral formulation using jointly sparse densities and support identification}\label{ss:JSP}

Let us first investigate the integral formulation \eqref{IntEqGam1} subject to multiple boundary forces.  Towards this end, let $\mathbf{A}^i:\RR^d\times\RR^d\to\RR^{1\times 1}$, $\mathbf{B}^i:\RR^d\times\RR^d\to\RR^{d\times d}$, $\mathbf{X}_m^1:\RR^d\to\RR^{1\times 1}$, and $\mathbf{X}_m^2:\RR^d\to\RR^{d\times d}$, for all $i=1,\cdots,d$ and $m=1,\cdots, M$, be defined by 
\begin{align*}
&\mathbf{A}^i(\bx,\by):= 
\Big([\nabla\cdot\bGam(\bx,\by)]_i\Big), 
\qquad
\mathbf{B}^i(\bx,\by):= 
{\rm slice}_1\Big(i, \mathcal{E}[\bGam]
(\bx,\by)\Big), \quad \bx\neq\by,  
\\\nm
&\mathbf{X}_m^1(\by):= \left[\lambda_0-\lambda(\by)\right]\Big(\nabla\cdot\bu_m(\by)\Big),  
\qquad
\mathbf{X}_m^2(\by):= 
2\left[\mu_0-\mu(\by)\right]\mathcal{E}[\bu_m](\by).
\end{align*}
With these definitions at hand,  \eqref{IntEqGam1} can be rewritten as
\begin{align}
\Big(-\frac{1}{2}&\mathcal{I}+\Kcal_\Omega\Big)\left[\bu_m-\bU_m\right](\bx)
=
\int_{\cup_{n=1}^N D_n}\sum_{i=1}^d\Big[\mathbf{A}^i(\bx,\by):\mathbf{X}^1_m(\by)+\mathbf{B}^i(\bx,\by):\mathbf{X}^2_m(\by)\Big]\be_i\,d\by
\nonumber
\\
=&
\int_{\cup_{n=1}^N D_n}\sum_{i=1}^d\Big[{\rm vec}\left(\mathbf{A}^i(\bx,\by)\right)^\top{\rm vec}\left(\mathbf{X}^1_m(\by)\right)+{\rm vec}\left(\mathbf{B}^i(\bx,\by)\right)^\top{\rm vec}\left(\mathbf{X}^2_m(\by)\right)\Big]\be_i\,d\by, 
\label{eq:ast} 
\end{align}
for all $\bx\in\partial\Omega$. Let us also introduce $\mathbf{X}_{m}:\cup_{n=1}^N D\to\RR^{d^2+1}$,  
 $\bLambda:\cup_{n=1}^N D \times \cup_{n=1}^N D\to\RR^{d\times (d^2+1)}$, and $\mathbf{Y}_{m}:\partial\Omega\to\RR^{d}$ by
\begin{align*}
&\mathbf{X}_{m}(\by):=
\begin{pmatrix}
{\rm vec}\left(\mathbf{X}_m^1(\by)\right)
\\
{\rm vec}\left(\mathbf{X}_m^2(\by)\right)
\end{pmatrix},
\quad
\bLambda(\bx,\by)
:=
\begin{pmatrix}
{\rm vec}\left(\mathbf{A}^1(\bx,\by)\right)^\top  
&
& 
{\rm vec}\left(\mathbf{B}^1(\bx,\by)\right)^\top
\\
\vdots&&\vdots
\\
{\rm vec}\left(\mathbf{A}^d(\bx,\by)\right)^\top  &
& 
{\rm vec}\left(\mathbf{B}^d(\bx,\by)\right)^\top
\end{pmatrix},
\\
&\mathbf{Y}_{m}(\bx):=
\left(-\frac{1}{2}\mathcal{I}+\Kcal_\Omega\right)\left[\bu_m-\bU_m\right](\bx),
\end{align*}
so that, from \eqref{eq:ast},
\begin{align}
\mathbf{Y}_m(\bx)
=
\int_{\cup_{n=1}^N D_n} \bLambda(\bx,\by)\mathbf{X}_{m}(\by)d\by, \quad \bx\in\partial\Omega.
\label{IntEqMatrix1}
\end{align}
Remark that the integral in \eqref{IntEqMatrix1} has to be evaluated over the unknown support of the inclusions. In order to furnish an integral equation that does not require \emph{\`a priori} information of the unknown support of the inclusions, the function  $\mathbf{X}_m$ is simply extended by zero outside $\cup_{n=1}^N D_n$, i.e., its extension, $\widetilde{\mathbf{X}}_m$, defined by 
\begin{align*}
\widetilde{\mathbf{X}}_m:= 
\begin{cases}
\mathbf{X}_{m}(\by),  & \text{for } \by\in\cup_{n=1}^N D_n, 
\\
\mathbf{0},  & \text{for }\by\in\Omega\setminus\cup_{n=1}^N \overline{D_n},
\end{cases}
\end{align*}
is considered so that 
\begin{align}
\mathbf{Y}_m(\bx)
=
\int_{\Omega} \bLambda(\bx,\by)\widetilde{\mathbf{X}}_{m}(\by)d\by, \quad \bx\in\partial\Omega.
\label{IntEqMatrix2}
\end{align}

It is very interesting to note that the inclusions are compactly embedded well inside the background domain $\Omega$  (thanks to assumption H1) and are located at fixed positions despite of the different applied boundary forces $\bg_1,\cdots,\bg_M$. Moreover, the density $\widetilde{\mathbf{X}}_m$ varies only at the support of the inclusions, $D_1,\cdots, D_N$, for each excitation but is zero elsewhere  independent of the measurement data $\mathbf{Y}_m$. Therefore, assuming sparsity for the support set $\cup_{n=1}^N D_N$ in $\Omega$, the problem of inclusion detection from integral form \eqref{IntEqMatrix2} can be regarded as a joint sparse recovery problem of $\widetilde{\mathbf{X}}_{m}(\by), m=1,\cdots,M$, which has been extensively investigated in compressed sensing literature \cite{chen2006trs,Kim2010CMUSIC}.  A detailed implementation including the discretization of integral form \eqref{IntEqMatrix2} for joint sparse recovery will be discussed at a later stage. 
Once  the density  $\widetilde{\mathbf{X}}_{m}(\by), m=1,\cdots, M,$ is reconstructed using a joint sparse recovery algorithm, the support of the inclusions can be easily identified by  investigating the magnitudes of  $\widetilde{\mathbf{X}}_{m}(\by)$.

\subsection{Recovery of constitutive parameters}\label{ss:Parameters}

In the sequel,  the notation $\hat{D}$ is adopted for the entire reconstructed support $\cup_{n=1}^N D_n$ by virtue of joint sparse recovery step. 
 Similarly,  $\hat{\mathbf{X}}_{m}(\by)$ (and, accordingly, $\hat{\mathbf{X}}^1_m(\by), \hat {\mathbf{X}}^2_m(\by)$) will denote the estimated densities.
The second step of the proposed algorithm dealing with the parameter evaluation is based on the integral equation \eqref{IntEqGam2}.  Precisely, first the total displacement field  is estimated for all $\bx\in\hat{D}$ using the recursive relationship
\begin{align*}
\widehat{\bu}_m(\bx)=\bU_m(\bx)+\mathcal{D}_\Omega
&
\left[(\bu_m-\bU_m)\big|_{\partial\Omega}\right](\bx)
\nonumber
\\
&-
\int_{\hat{D}}\sum_{i=1}^d\Big[\mathbf{A}^i(\bx,\by):\hat{\mathbf{X}}^1_m(\by)+\mathbf{B}^i(\bx,\by):\hat{\mathbf{X}}^2_m(\by)\Big]\be_i\,d\by,
\end{align*}
where  $\widehat{\bu}_m$ is the calculated total field over $\hat{D}$ at this step and all the terms on the RHS are obtained either from the measurements or the first step. Then, the quantities $\nabla\cdot\widehat{\bu}_m$ and $\mathcal{E}\left(\widehat{\bu}_m\right)$ can also be computed from $\widehat{\bu}_m$ for all $\bx\in\hat{D}$. Finally, the integral formulation \eqref{IntEqGam1} or its matrix form  \eqref{IntEqMatrix1} is invoked once again to formulate another problem with slightly modified sensing matrix and new unknowns, $(\lambda_0-\lambda)$ and $(\mu_0-\mu)$. Indeed, from  \eqref{IntEqMatrix1},  
\begin{align}
\mathbf{Y}_m(\bx)
&=  \int_{\hat{D}} \bLambda(\bx,\by) \hat{\mathbf{X}}_m(\by)d\by
=
\int_{\hat{D}} \widetilde{\bLambda}_m(\bx,\by)\mathbf{Z}(\by)d\by,
\label{IntEqMatrix3}
\end{align}
where $\widetilde{\bLambda}_m : \cup_{n=1}^{N} D_n \times \cup_{n=1}^{N} D_n \to \RR^{d\times (d^2+1)}$ and $\mathbf{Z}:\cup_{n=1}^{N} D_n \to \RR^{d^2+1}$ are defined by 
\begin{align*}
&\widetilde{\bLambda}_m(\bx,\by) := 
\bLambda(\bx,\by){\rm diag}\Big(\nabla\cdot\hat{\bu}_m(\by), {\rm vec} \big(2\mathcal{E}(\hat{\bu}_m)(\by)\big)^\top\Big),
\\
&\mathbf{Z}(\by) :=  
\begin{pmatrix}
\lambda_0-\lambda(\by), 
&  
\mu_0-\mu(\by),
&
\cdots,
&
\mu_0-\mu(\by)
\end{pmatrix} ^\top.
\end{align*}

It is emphasized that the problem \eqref{IntEqMatrix3} is {\em linear} for $\mathbf{Z}$ since the support set $\hat{D}$ and  the modified sensing matrix $\widetilde{\bLambda}_m$ on it are completely known. Thus,   no linearization or iterative update is required to solve \eqref{IntEqMatrix3}. Note that, the second step of inverse problem is expected to be efficient and less ill-posed due to the knowledge of the estimated position of anomalies.

\subsection{Joint sparsity algorithm in discrete setting}\label{ss:discrete}

Let us now explain a  procedure to use the findings of Sections \ref{ss:JSP} and \ref{ss:Parameters} in a discrete setting. In doing so, the first critical step is the  Calder\'{o}n preconditioning of the discrete data obtained on a finite number of boundary points using the operator $(-\mathcal{I}/2+\Kcal_\Omega)$.  Once the measured data is filtered, the integral formulations \eqref{IntEqMatrix2} and \eqref{IntEqMatrix3} can be descritized for implementation of the joint sparsity algorithm using discrete measurements. 

\subsubsection{Calder\'{o}n preconditioning and discrete data interpolation}\label{ss:Calderon}

Although the left-hand side of the integral formulations \eqref{IntEqMatrix2} and \eqref{IntEqMatrix3} are the filtered outputs of $\bu_m(\bx)-\bU_m(\bx)$ using Calder\'{o}n preconditioning,
the actual measurement in real experiments are the discrete samples of $\bu_m(\bx)-\bU_m(\bx)$  instead of their filtered outputs.
This can be problematic because it can be assumed that the filtered outputs can  be calculated only if the measurements $\bu_m(\bx)-\bU_m(\bx)$ are available at all points along the boundary $\partial \Omega$, taking into account the continuous nature of the  Calder\'{o}n preconditioning operator.

However, this problem can be easily alleviated by interpolating the discrete measurement of $\bu_m(\bx)-\bU_m(\bx)$ along $\partial \Omega$ and then applying
the Calder\'{o}n preconditioning.  For this purpose, in particular high-order spline interpolation are used. This may result in potential interpolation errors in the filtered outputs  used for the integral formulations \eqref{IntEqMatrix2} and \eqref{IntEqMatrix3}.
However, due to the stability of the compressed sensing reconstruction, this does not involve any significant errors in the results of the final reconstruction,  as will be shown in the experimental section.

\subsubsection{Step one: Descritization of \eqref{IntEqMatrix2}}\label{sec:step1}

For numerical implementation,  let us assume that $\widetilde{\mathbf{X}}_{m}$ is approximated by either piece wise constant functions or splines as 
\begin{align*}
\left[\mathbf{\widetilde{X}}_{m}(\by)\right]_q=
\ds\sum_{\ell=1}^L\left[\mathbf{\widetilde{X}}_{m}(\by_{\ell})\right]_q\;\varphi_q\left(\by,\by_{\ell}\right),
\quad \forall q\in\{1,\cdots, d^2+1\},\,\,\by\in\Omega,
\end{align*}
where $\{\by_{\ell}\}_{\ell=1}^L$, for some $L\in\mathbb{N}$, are the finite sampling points of $\Omega$ and $\varphi_q\left(\by,\by_{\ell}\right)$ is the basis function for the $q-$th coordinate with $q\in\{1,\cdots,d^2+1\}$.  

Using measurement points $\{\bx_{r}\}_{r=1}^R\subset\partial\Omega$  and  sampling points $\{\by_{\ell}\}_{\ell=1}^L \in\Omega$, let us introduce the unknown density $\mathfrak{X}\in\RR^{(d^2+1)L\times M}$, the measurement matrix $\mathfrak{Y}\in\RR^{dR\times M}$ and the sensing matrix $\mathbf{\Pi}\in\RR^{dR\times (d^2+1)L}$ by 
\begin{align*}
\mathfrak{X}:=
\begin{pmatrix}
\mathfrak{X}_1
\\
\vdots
\\
\mathfrak{X}_{d^2+1}
\end{pmatrix},
\quad
\mathfrak{Y}:=
\begin{pmatrix}
\mathfrak{Y}_1
\\
\vdots
\\
\mathfrak{Y}_{d}
\end{pmatrix}, 
\quad\text{and}\quad
\mathbf{\Pi}:=
\begin{pmatrix}
\mathbf{\Pi}_{11} & \cdots &\mathbf{\Pi}_{1\,(d^2+1)}
\\
\vdots & \ddots & \vdots
\\
\mathbf{\Pi}_{d1} & \cdots &\mathbf{\Pi}_{d\,(d^2+1)}
\end{pmatrix},
\end{align*}
where the element matrices  $\mathfrak{X}_q\in\RR^{L\times M}$, $\mathfrak{Y}_p\in\RR^{R\times M}$ and $\mathbf{\Pi}_{pq}\in\RR^{R\times L}$ are defined by
\begin{align*}
\left[\mathfrak{X}_q\right]_{\ell m}:=&\left[\widetilde{\mathbf{X}}_m(\by_\ell)\right]_q,
\,\,\,
\left[\mathfrak{Y}_p\right]_{rm}:=&\left[\mathbf{Y}_m(\bx_r)\right]_p,
\,\,\,\text{and}\,\,\,
[\mathbf{\Pi}_{pq}]_{r\ell} :=&\int_{\Omega}\left[\bLambda\left(\bx_r,\bz\right)\right]_{pq}\varphi_q\left(\bz,\by_\ell\right)d\bz,
\end{align*}
for all $p\in\{1,\cdots, d\}$, $q\in\{1,\cdots,d^2+1\}$, $r\in\{1,\cdots,R\}$ and $\ell\in\{1,\cdots, L\}$.
The aforementioned discretization and definitions render the system of linear equations 
\begin{eqnarray}
\mathfrak{Y}=\mathbf{\Pi}\mathfrak{X}.\label{eq:DisSys}
\end{eqnarray}

The following remarks are in order. 
Firstly, there are usually more sampling points $\by_{\ell}\in\Omega$ than the measurement points $\bx_{r}\in\partial\Omega$ in practice. 
Therefore, the linear system \eqref{eq:DisSys} is practically very under-determined, i.e., $dR\ll (d^2+1)L$. 
Therefore, there is no uniqueness of the solution without assuming any prior knowledge.
Secondly, the unknown density matrix $\mathfrak{X}$ is sparse thanks to its construction and the assumption of the sparsity of the support set $\cup_{n=1}^N D_n$ in $\Omega$.  In fact, in the aforementioned discrete setup the non-zero rows of $\mathfrak{X}$ are those that correspond to the locations $\by_{\ell}\in \cup_{n=1}^N D_n$. 
These two observations naturally lead us to exploit the joint sparsity as a prior information.
 As will be discussed at a later stage, there are several joint sparse recovery algorithms to uniquely solve system \eqref{eq:DisSys}. 
Therefore, by solving the linear system using any one of those algorithms, the unknown density $\mathfrak{X}$ can be recovered which in turn gives access to the support of the inclusions and the perturbed displacement field inside the support of the inclusions.

\subsubsection{Step two: Discretization of  \eqref{IntEqMatrix3}}

The formulation \eqref{IntEqMatrix3} renders another under-determined system of linear equations  in the discrete setting and $\mathbf{Z}(\by_\ell)$, for all $\by_\ell\in \hat D$, can be obtained by solving another constraint optimization problem. Towards this end, let $\{\hat{\by}_\ell\}_{\ell=1}^{\tilde{L}}$ be the collection of sampling points that belong to the support set $\hat{D}$ with $\widetilde{L} \ll L$. Then, by fairly easy manipulations similar to those in Section  \ref{sec:step1}, the descretized version of \eqref{IntEqMatrix3} is obtained as 
\begin{align}
\widetilde{\mathbf{Y}}=\widetilde{\mathbf{\Pi}}\widetilde{\mathbf{Z}},
\label{eq:DisSys2}
\end{align}
with 
\begin{align*}
\widetilde{\mathbf{Y}}:=
\begin{pmatrix}
\widetilde{\mathbf{Y}}_1
\\
\widetilde{\mathbf{Y}}_2
\\
\vdots
\\
\widetilde{\mathbf{Y}}_M
\end{pmatrix}\in\RR^{MdR},
\quad
\widetilde{\mathbf{Z}}:=
\begin{pmatrix}
\widetilde{\mathbf{Z}}_1
\\
\widetilde{\mathbf{Z}}_2
\\
\vdots
\\
\widetilde{\mathbf{Z}}_{d^2+1}
\end{pmatrix} \in\RR^{(d^2+1)\widetilde{L}},
\quad\text{and}\quad
\widetilde{\mathbf{\Pi}}:=
\begin{pmatrix}
\widetilde{\mathbf{\Pi}}_1
\\
\widetilde{\mathbf{\Pi}}_2
\\
\vdots
\\
\widetilde{\mathbf{\Pi}}_M
\end{pmatrix}\in\RR^{MdR\times (d^2+1)\widetilde{L}},
\end{align*}
where 
\begin{align*}
&
\widetilde{\mathbf{\Pi}}_m
=
\begin{pmatrix}
\widetilde{\mathbf{\Pi}}_{m11} &  \cdots & \widetilde{\mathbf{\Pi}}_{m1(d^2+1)}
\\
\vdots & \ddots   &\vdots
\\
\widetilde{\mathbf{\Pi}}_{md1} & \cdots & \widetilde{\mathbf{\Pi}}_{md(d^2+1)}
\end{pmatrix}
\quad\text{with}\quad
\left[\widetilde{\mathbf{\Pi}}_{mpq}\right]_{r\ell} 
=
\int_{\hat{D}}\left[\widetilde{\bLambda}_{m}(\bx_r,\by)\right]_{pq}\varphi_q(\by,\hat{\by}_{\ell})d\by,
\\
&
\widetilde{\mathbf{Y}}_m= 
\begin{pmatrix}
\widetilde{\mathbf{Y}}_{m1}
\\
\vdots
\\
\widetilde{\mathbf{Y}}_{md}
\end{pmatrix}
\quad\text{with}\quad
\widetilde{\mathbf{Y}}_{mp}= 
\begin{pmatrix}
[\mathbf{Y}_{m}(\bx_1)]_p
\\
\vdots
\\
[\mathbf{Y}_{m}(\bx_R)]_p
\end{pmatrix},
\quad\text{and}\quad
\widetilde{\mathbf{Z}}_{q}= 
\begin{pmatrix}
[\mathbf{Z}(\hat{\by}_1)]_q
\\
\vdots
\\
[\mathbf{Z}(\hat{\by}_{\widetilde{L}})]_q
\end{pmatrix},
\end{align*}
for all $p\in\{1,\cdots, d\}$, $q\in\{1,\cdots, d^2+1\}$, $m\in\{1,\cdots, M\}$, $\ell\in\{1,\cdots, \widetilde{L}\}$ and $r\in\{1,\cdots, R\}$.
Note that the number of unknowns in the discretized domain is reduced from $(d^2+1)L$ to $(d^2+1)\widetilde{L}$, whereas the sensing matrix $\widetilde{\mathbf{\Pi}}$ is accurate if the estimates of $\hat{\by}_{\ell}$ and $\hat{\bu}_m$  are precise.

\section{Implementation of the  proposed algorithm}\label{s:Implement}

Let us now discuss the implementation details of the solution procedure for the discrete inverse problems \eqref{eq:DisSys} and \eqref{eq:DisSys2}.
Note that the problem \eqref{eq:DisSys} is a joint sparse recovery problem with a prior constraint that the number of non-zero rows  in unknown multi-vector $\mathfrak{X}$ is sparse, whereas problem \eqref{eq:DisSys2} is a classical linear single measurement vector  inverse  problem.
Therefore, the two problems should be addressed separately.
In the following, the compressed sensing approach for joint sparse recovery is  reviewed in Section \ref{ss:CS}, and a modified version of Multiple Sparse Bayesian Learning (M-SBL) algorithm is explained in Section \ref{ss:MSBLImp} as our  joint sparse recovery algorithm to solve \eqref{eq:DisSys}.
In order to solve \eqref{eq:DisSys2}, the detailed implementation of the Constrained Split Augmented Lagrangian Shrinkage Algorithm (C-SALSA) is discussed in Section \ref{ss:Born}.

\subsection{Compressed sensing for joint sparse recovery problems}\label{ss:CS}

Compressed Sensing  (CS) theory is the state of the art in the field of signal processing that enables the recovery of signals beyond the Nyquist limit based on their sparsity \cite{CaRoTa06}. As an example, let us consider the under-determined linear system  $\By = \mathbf{\Pi} \Bx$ that has many solutions. One of the most important innovations of CS is that when the signal $\Bx$ is sparse,  its  accurate recovery is possible using the sparse recovery problem 
\begin{equation}\label{eq:y=Ax0}
\min_{\Bx} \|\Bx\|_0  
\quad\text{subject to }  \By = \mathbf{\Pi} \Bx,
\end{equation}
where $\By \in \mathbb{R}^{J}$, $\mathbf{\Pi} \in \mathbb{R}^{J \times K}$, and $\Bx \in \mathbb{R}^{K}$ with $J<K$ (see, for instance, \cite{CaRoTa06}). Here  $\|\Bx\|_0$ denotes the number of non-zero elements in the vector $\Bx$. 
The uniqueness of the solution to the problem \eqref{eq:y=Ax0} is guaranteed by the condition 
\begin{equation*}
\|\Bx\|_0 < \frac{{\rm spark}(\mathbf{\Pi})}{2},
\end{equation*}
where ${\rm spark}(\mathbf{\Pi})$ is the smallest possible number $\ell$ such that there exist $\ell$ linearly dependent columns of $\mathbf{\Pi}$ \cite{DoEl03}. Since \eqref{eq:y=Ax0} is an \emph{NP-hard} problem,  a convex relaxation using $l_1-$minimization widely used in practice is
\begin{equation}\label{eq:y=Ax1}
\min_{\Bx} \|\Bx\|_1 \quad\text{subject to } \By = \mathbf{\Pi} \Bx,
\end{equation}
where $\|\cdot\|_1$ denotes the $l_1-$norm. The beauty of compressed sensing is that \eqref{eq:y=Ax1} provides exactly the same solution as \eqref{eq:y=Ax0} if the so-called \emph{restricted isometry property} (RIP) is satisfied \cite{Candes2008IntroCS}. It has been shown that for many classes of random matrices, the RIP is satisfied with extremely high probability if the number of measurements satisfies  $J\geq c \|\Bx\|_0 \log (K/\|\Bx\|_0)$, where $c$ is an absolute constant \cite{Candes2008IntroCS}.

The Multiple Measurement Vector (MMV) problem is a generalization of the Single Measurement Vector (SMV) problem defined in \eqref{eq:y=Ax0} \cite{chen2006trs,Kim2010CMUSIC}. It is the signal recovery problem to exploit a set of sparse signal vectors that share common non-zero supports, i.e., a set of signal vectors that have \emph{joint sparsity}. Specifically, let $\|\mathfrak{X}\|_0$ denote the number of rows that have non-zero elements in the matrix $\mathfrak{X}$. Then, the MMV problem addresses the following:
\begin{equation} \label{eq:y=Ax0MMV}
\min_{\mathfrak{X}} \|\mathfrak{X}\|_0  
\quad \text{ subject to } \mathfrak{Y} = \mathbf{\Pi}\mathfrak{X},
\end{equation}
where $\mathfrak{Y} \in \mathbb{R}^{J \times M}$, $\mathfrak{X} \in \mathbb{R}^{K \times M}$, and $M$ denotes the number of measurement vectors. 
Since the MMV problem \eqref{eq:y=Ax0MMV} contains more information than the SMV problem \eqref{eq:y=Ax0} (except in the degenerate case when all columns of $\mathfrak{Y}$ are linearly dependent),  it provides better reconstruction results.  Theoretically, problem \eqref{eq:y=Ax0MMV} has unique solution if and only if
\begin{equation}\label{eq:y=Ax0MMVuniq}
\|\mathfrak{X}\|_0 < \frac{{\rm spark}(A)+ \mbox{rank}(\mathfrak{Y})-1}{2},  
\end{equation}
where $\mbox{rank}(\mathfrak{Y})$ denotes the rank of $\mathfrak{Y}$ which may increase with the number of measurement vectors \cite{chen2006trs, Davies2012MMV}. 
 Note that \eqref{eq:y=Ax0MMVuniq} is just an algebraic bound for the noiseless measurements. In practice, the number of measurements can be reduced proportionally to the number of multiple measurement vectors, i.e. 
$J\geq (c /M)\|\mathfrak{X}\|_0 \log (K/\|\mathfrak{X}\|_0 )$ \cite{Kim2010CMUSIC}.  

There are various types of joint sparse recovery algorithms to solve the MMV problem including the convex relaxation \cite{cotter2005ssl, Kim2010CMUSIC, tropp2006ass2,wipf2007ebs}. 
Among those,  the M-SBL  algorithm \cite{wipf2007ebs} is chosen here due to its robustness for noise.  The detailed description of the M-SBL with its specific modification to the problem under investigation is provided in the next section.

\subsection{M-SBL implementation}\label{ss:MSBLImp}

It is worthwhile precising that  M-SBL algorithm was initially derived based on the assumption that the noise  and the unknown signal  follow independent and identically distributed (i.i.d)  zero mean Gaussian distributions. However, recent theoretical analysis substantiates that M-SBL is in fact a sparse recovery algorithm that can be used in deterministic framework without assuming any statistics for the unknown signal (see, for instance, \cite{Wipf2011Latent}). More specifically, it solves the minimization problem
\begin{align}
\label{eq:MSBLcost}
\min_{\mathfrak{X}} \|\mathfrak{Y}-\mathbf{\Pi}\mathfrak{X}\|_F^2 + \zeta \OR_\zeta(\mathfrak{X}),
\end{align}
wherein the penalty function $\OR_\zeta(\mathfrak{X})$ is given by
\begin{align*}
\OR_\zeta(\mathfrak{X}):=\min_{g_{i}\geq 0, \forall i=1,\cdots,(d^2+1)L} {\rm tr}(\mathfrak{X}^*\bG^{-1}\mathfrak{X}) + M \log \left|\det ( \mathbf{\Pi}\bG \mathbf{\Pi}^* + \zeta \I_{dR})\right|,
\end{align*}
with \emph{tr} indicating the trace of a matrix and the superposed $*$ reflecting a Hermitian conjugate, i.e., $\mathbf{A}^*=\overline{\mathbf{A}}^\top$. Here $\zeta$ is a regularization hyper-parameter controlling the relative weights of the two terms and provides a trade-off between fidelity to the measurements and noise sensitivity, and $\bG\in\RR^{(d^2+1)L\times(d^2+1)L}$ is a diagonal matrix with  entries $[\bG]_{ii}:=g_{i}$  indicating the sparseness of   the respective rows of $\mathfrak{X}$. It is  emphasized that, thanks to the non-separating nature of $\det(\cdot)$, the M-SBL penalty function $\OR_\zeta$ imposes the sparsity more effectively than  the conventional $l^p-$ norms. The interested readers  are referred, e.g., to \cite{Wipf2011Latent} for a detailed topical discussion.

It is interesting to note that by construction the unknown matrix $\mathfrak{X}$ has a special block structure for elasticity imaging unlike the general joint sparse signal recovery problems. In fact, the density $\mathfrak{X}$ is a block matrix composed of ($d^2+1)$ sub-matrices vertically stacked and each one of those has exactly same joint sparsity structure. The sparsity structure of elastostatic problem for $d=2$ is delineated in Figure \ref{fig:JSM}.  
Moreover, since it is really inevitable to avoid measurement noise in practice, it is more appropriate to consider the noisy linear system 
\begin{eqnarray}
\mathfrak{Y}=\mathbf{\Pi}\mathfrak{X}+\mathbf{E},\label{eq:DisSysNoisy}
\end{eqnarray} 
than  the system \eqref{eq:DisSys}.  Here $\mathbf{E}\in\RR^{dR\times M}$ represents additive measurement noise.  
Therefore, the joint sparse recovery problem corresponding to \eqref{eq:DisSysNoisy} subject to the aforementioned structural constraint is given by
\begin{align}
\min_{\mathfrak{X}\in \mathcal{M}_{\rm ad}}
\left\| \mathfrak{X}\right\|_0  
\quad
\text{subject to } 
\left\|
\mathfrak{Y}-\mathbf{\Pi}\mathfrak{X}\right
\|_F 
\leq \epsilon,
\label{eq:JSM}
\end{align}
where $\mathcal{M}_{\rm ad}\subset \RR^{(d^2+1)L\times M}$ is the set of admissible matrices that have aforementioned block structure  and $\epsilon>0$ is a noise dependent  parameter. 
\begin{figure}[!htb] 	
\centering
\includegraphics[width=0.95\textwidth]{./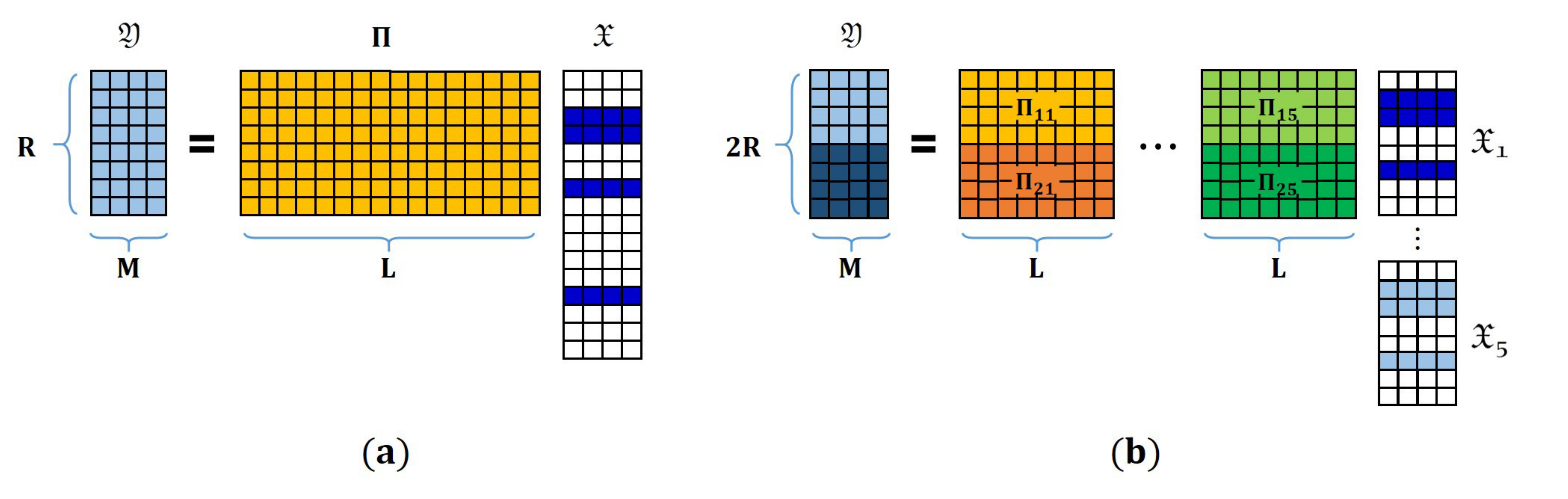}
\caption{Joint sparsity models in $2D$. (a) General problem. (b) Elastostatic problem. 
}
\label{fig:JSM}
\end{figure}

\subsubsection{Signal recovery}

In order to solve the joint sparsity problem \eqref{eq:JSM}, a modified version of M-SBL algorithm with structural constraint is proposed. Precisely,  the general optimization problem \eqref{eq:MSBLcost} is  modified as
\begin{align*}
\min_{\mathfrak{X}} \|\mathfrak{Y}-\mathbf{\Pi}\mathfrak{X}\|_F^2 + \zeta \OR_\zeta(\mathfrak{X}) + \mathbbm{1}_{\mathcal{M}_{\rm ad}}(\mathfrak{X}),
\end{align*}
where $\mathbbm{1}_{\mathcal{M}_{\rm ad}}$ denotes the indicator function of $\mathcal{M}_{\rm ad}$, i.e.,
\begin{align*}
\mathbbm{1}_{\mathcal{M}_{\rm ad}}(\bw ) :=
\begin{cases} 
0, &  {\bw} \in \mathcal{M}_{\rm ad},
\\ 
\infty, & \text{otherwise.} 
\end{cases}
\end{align*}
 Owing to the structural constraint on unknown density $\mathfrak{X}$, the associated structured sparsities are updated simultaneously using the constraint
\begin{align*}
g_\ell=g_{\ell+L}=\cdots= g_{\ell+ d^2L}, \quad\forall \ell=1,\cdots L.
\end{align*} 
The step-by-step procedure for the modified M-SBL is summarized in Algorithm \ref{alg:PseudocodeMSBL}. 

\begin{algorithm}[!htb]
\caption{Modified M-SBL for sparse signal recovery in elasticity imaging.}
\label{alg:PseudocodeMSBL}
\begin{algorithmic}[1]
\State Set iterations  ${\rm Iter}_{\max}\geq 1$ and threshold $1\gg\varrho>0$.

\State Set $\sigma_{\max}$ to be the largest singular value of $\mathbf{\Pi}$.  

\State  Set $k:=0$ and $\zeta^{(0)}:=10 \times \sigma_{\max}^2$.

\State Set $g^{(0)}_{\ell}:=1$ for $\ell=1,2,\cdots,(d^2+1)L$ and $\bG^{(0)}:={\rm diag}\left(g_{1}^{(0)},g_{2}^{(0)},\cdots, g^{(0)}_{(d^2+1)L}\right)$.

\State Set ${\bpi}_\ell:= \Big([\mathbf{\Pi}]_{1\ell}, \cdots, [\mathbf{\Pi}]_{(dR)\ell}\Big)^\top$ and ${\bpi}^*_\ell:= \left([\mathbf{\Pi}]^*_{1\ell}, \cdots, [\mathbf{\Pi}]^*_{(dR)\ell}\right)^\top$.

\For{$k=1,\dots, {\rm Iter}_{\max}$}
 
    \State Set $\mathbf{F}^{(k-1)}:=\left(\mathbf{\Pi}\bG^{(k-1)}\mathbf{\Pi}^* + \zeta^{(k-1)}\I_{dR} \right)^{-1}$.
   
    \State Update $\mathfrak{X}^{(k)}=\bG^{(k-1)}\mathbf{\Pi}^*\mathbf{F}^{(k-1)} \mathfrak{Y}$.
   
    \State Update: for $\ell=1,\cdots,L,$
    $$
    \ds g_{\ell}^{(k)}=g_{\ell+L}^{(k)}=\cdots=g_{\ell+d^2L}^{(k)} =\sqrt{{\ds\sum_{q=1}^{d^2+1}\sum_{m=1}^M\left| \left[\mathfrak{X}_q^{(k)}\right]_{\ell m}\right|^2}\Big/M{\ds\sum_{p=0}^ {d^2L}\bpi_{\ell+p}^*\mathbf{F}^{(k-1)} \bpi_{\ell+p} }}.
    $$
    
    \State Set $\bG^{(k)}:={\rm diag}\left(g_{1}^{(k)},g_{2}^{(k)},\cdots, g^{(k)}_{(d^2+1)L}\right)$.
    
    \If{$\ds {g_\ell^{(k)}}/{\max_{\ell}\left(g_\ell^{(k)}\right)} < \varrho$}  
     
        \State $g_\ell^{(k)}=0.$
    
    \EndIf
    
    \State Update $\ds\zeta^{(k)} = \sqrt{{\|\mathfrak{Y}-\mathbf{\Pi}\mathfrak{X}^{(k)}\|_F^2}\Big/{M \,{\rm tr}\left(\mathbf{F} ^{(k-1)}\right)}}$.
    
\EndFor
 \Return $\hat{\mathfrak{X}}:=\mathfrak{X}^{(k)}$.
\end{algorithmic}
\end{algorithm}

\subsubsection{Preconditioning}

Recall that the problem \eqref{eq:DisSysNoisy} is severely ill-posed if the inclusions are extended and not really sparse inside the background domain due to the intrinsic ill-posedness of the elasticity imaging problem. Moreover, the sensing matrix, which is associated to a physical system, has a coherence structure and its columns are not completely incoherent. This affects the performance of the sparsity based recovery algorithms. Therefore, it is desirable to introduce a surgical preconditioning procedure before executing M-SBL algorithm. For this, the singular value decomposition of the sensing matrix is considered as $\mathbf{\Pi} = \mathbf{V}\mathbf{\Sigma}\mathbf{W}^*$. Here $\mathbf{\Sigma}\in\RR^{dR\times (d^2+1)L}$ is such that $[\mathbf{\Sigma}]_{rr}=:\sigma_{r}$, for all $r=1,\cdots, dR$, are the singular values  of $\mathbf{\Pi}$ and $[\mathbf{\Sigma}]_{r\ell}=0$ for all $r\neq \ell$. The matrices $\mathbf{V}\in\RR^{dR\times dR}$ and  $\mathbf{W}\in\RR^{(d^2+1)L\times (d^2+1)L}$ are unitary and their columns are respectively the left and right singular vectors of $\mathbf{\Pi}$. Consequently, a preconditioning weight matrix $\mathbf{P}\in\RR^{dR\times dR}$ can be introduced as
\begin{align*}
\mathbf{P}= (\mathbf{\Sigma}^2+\theta\I_{dR})^{-1/2}\mathbf{V}^*,
\end{align*} 
with $\theta$ being a thresholding parameter  (refer, for instance, to \cite{Jin2012precond, Lee2011CDOT}).  The M-SBL algorithm can then be applied to the regularized problem 
\begin{align}
\label{eq:preconProblem}
\mathbf{P}\mathfrak{Y}=\mathbf{P}\mathbf{\Pi}\mathfrak{X}+\mathbf{P}\mathbf{E}.
\end{align}

\subsubsection{Support identification}

The application of M-SBL Algorithm \ref{alg:PseudocodeMSBL}  renders the  unique minimizer  $\hat{\mathfrak{X}}$ to  the constraint optimization problem   \eqref{eq:JSM}.  Having recovered the sparse signal vector, one can  identify the support set $\cup_{n=1}^ N D_n$ by collecting all $\by_\ell$ such that $[\widetilde{\mathbf{X}}_m(\by_\ell)]_q$ is nonzero for all $q=1,\cdots, d^2+1$ and $m=1,\cdots,M$. In other words, it suffices to look for $\ell\in \{1,\cdots, L\}$ such that $[\hat{\mathfrak{X}}_q]_{\ell m}$ is non-zero for all $q=1,\cdots, d^2+1$ and $m=1,\cdots,M$.  Towards this end, set
\begin{align}
\label{eq:SpecMSBL2}
\hat{D} := \left\{\by_{\ell} \;\Big\vert\; {\psi_{\ell}}\Big/{\ds\max_{\ell\in\{ 1,\cdots, L\} }(\psi_\ell)} > \xi \right\},
\end{align}
where $\xi$ is a pruning parameter and $\psi_\ell$ is  defined by
\begin{align*}
\psi_{\ell} := \sqrt{\sum_{q=1}^{d^2+1} \sum_{m=1}^{M} \left|\left[\hat{\mathfrak{X}}_q\right]_{\ell m}\right|^2}, \quad 1\leq  \ell \leq L.
\end{align*}
Note that $\psi_{\ell} $ indicates the sparseness of the $l$-th row.  Due to the numerical implementation,  the values of $[\hat{\mathfrak{X}}_q]_{\ell m}$ cannot reach  zero absolutely, though they may be very small. Consequently,  this pruning step is indispensable to sweep away the values smaller than a predefined threshold depending on the noise level and numerical discretization.

\subsection{Parameter reconstruction}\label{ss:Born}

For the quantitative evaluation of the Lam\'{e} parameters of $\hat{D}$, one needs to solve the discrete system \eqref{eq:DisSys2}. This can be done by formulating the constrained optimization problem 
\begin{align}
\label{eq:CondRecon2}
{\rm arg}\min \limits_{\mathfrak{Z}} \, \widetilde{\zeta}\|\mathfrak{Z}\|_{1}
\,\text{ subject to }\,  \| \widetilde{\mathbf{Y}}-\widetilde{\mathbf{\Pi}}\mathfrak{Z}\|_2 \leq \eta, 
 \,\,
 \theta_{\min} \leq  [\mathfrak{Z}]_i\leq  \theta_{\max},
 \,\,
  i=1,\cdots, (d^2+1)\widetilde{L},
\end{align}
where $l_1-$ penalty is enforced in order to achieve noise robust reconstruction and  the constraint 
\begin{align}
\theta_{\min}\leq [\mathfrak{Z}]_i \leq \theta_{\max},\label{eq:BoxCons}
\end{align}
emerges from the assumption H3. Here $\theta_{\min}$ and $\theta_{\max}$ are real numbers such that $\theta_{\min}\leq\theta_{\max}$ and $\widetilde{\zeta}$ is the constraint weight. There are several algorithms available in the literature that are tailored to solve such constrainted optimization problems and any one of them can be deployed to resolve \eqref{eq:CondRecon2}. In this article, the C-SALSA by Afonso, Bioucas-Dias, and Figueiredo \cite{Afonso2011CSALSA} is implemented.  A pseudo-code implementation  of C-SALSA  for \eqref{eq:CondRecon2} is furnished in Algorithm \ref{alg:PseudocodeCSALSA}.  Beforehand, the sensing matrix $\widetilde{\mathbf{\Pi}}$ is normalized so that each one of its columns has a unit $l_2-$norm, however,  the normalized matrix is still denoted by $\widetilde{\mathbf{\Pi}}$  by abuse of notation. 

The idea of C-SALSA  is to transform the constrained optimization problem into an unconstrained problem first. Then, the resulting problems is further transformed using a variable splitting operation before finally being resolved using \emph{Alternating Direction Method of Multipliers} (ADMM). The interested readers are refered to \cite{Afonso2011CSALSA} for a topical review and detailed description of C-SALSA.

Let $\mathcal{B}_\eta(\widetilde{\mathbf{Y}})$ be the Euclidean ball in $\RR^{MdR}$ centered at $\widetilde{\mathbf{Y}}$ and radius $\eta$.  Then, the problem \eqref{eq:CondRecon2} can be seen as the unconstrained problem  (see \cite{Afonso2011CSALSA})
\begin{align}
\label{eq:UnconstrainedProb}
\min_{\mathfrak{Z}} \widetilde{\zeta}\| \mathfrak{Z}\|_{1}+\mathbbm{1}_{\mathcal{B}_\eta(\widetilde{\mathbf{Y}})} \left(\widetilde{\mathbf{\Pi}}\mathfrak{Z}\right),
\end{align}
where $\mathbbm{1}_{\mathcal{B}_\eta(\widetilde{\mathbf{Y}})}$ is the indicator function of ${\mathcal{B}_\eta(\widetilde{\mathbf{Y}})}$, i.e., 
\begin{align*}
\mathbbm{1}_{\mathcal{B}_\eta(\widetilde{\mathbf{Y}})}\left(\bw\right)
:=
\begin{cases}
0, &  \bw\in {\mathcal{B}_\eta(\widetilde{\mathbf{Y}})},
\\
\infty, &\text{otherwise}.
\end{cases}
\end{align*}

Let us introduce the mappings $f_1:\RR^{(d^2+1)\widetilde{L}}\to \RR$ and $f_2:\RR^{Md\widetilde{L}}\to \RR$ by 
$ f_1(\bs):=\|\bs\|_1$  and  $f_2(\bs):=\mathbbm{1}_{\mathcal{B}_\eta(\widetilde{\mathbf{Y}})}(\bs)$,
and the corresponding \emph{Moreau proximal mappings} $\Psi_{\tau f_1}: \RR^{(d^2+1)\widetilde{L}} \to  \RR^{(d^2+1)\widetilde{L}}$  and $\Psi_{\tau f_2}: \RR^{Md\widetilde{L}} \to  \RR^{Md\widetilde{L}}$ by 
\begin{align*}
\Psi_{\tau f_1} (\bs)&:= {\rm arg}\min_{\bv} \frac{1}{2}\|\bv-\bs\|_2^2+\tau\|\bv\|_1,
\\
\Psi_{\tau f_2} (\bs)&:= {\rm arg}\min_{\bw} \tau \mathbbm{1}_{\mathcal{B}_\eta(\widetilde{\mathbf{Y}})}(\bw)+\frac{1}{2}\|\bw-\bs\|_2^2.
\end{align*}
Refer, for instance, to \cite{Combettes} and articles cited therein for  details on Moreau proximal mappings. It is  worthwhile mentioning that   $\Psi_{\tau f_1}$ with $f_1$ being the $l_1-$regularizer turns out to be simply a soft thresholding \cite{Afonso2011CSALSA}, i.e., 
\begin{align*}
\Psi_{\tau f_1}(\bs)={\rm soft}(\bs,\tau),
\end{align*}
where  ${\rm soft}(\bs,\tau)$ reflects the  component-wise operation  
\begin{align*}
[\bs]_j = {\rm sign}\big([\bs]_j\big)\max\big\{\big|[\bs]_j\big|-\tau,\;0\big\}.
\end{align*}
In numerical implementation,  the regularization parameter $\tau$ is chosen to be  $\tau=0.1|\bar{\mathfrak{Z}}_0|$, where $|\bar{\mathfrak{Z}}_0|$ is the average value of $|\mathfrak{Z}|$ at the zeroth iteration in Algorithm \ref{alg:PseudocodeCSALSA}. The threshold $\eta$ is fixed at $\eta=0.3\|\widetilde{\mathbf{Y}}\|_2$.   The data fidelity parameter $\widetilde{\zeta}$ is manually selected as the optimal choice from the set $ \{ 8,4,2,1,1/2,1/4,1/8\}$.  The parameters $\theta_{\min}$ and $\theta_{\max}$ in \eqref{eq:BoxCons} are set to $-\infty$ and $+\infty$, respectively. Even in this (unconstrained) setup, the constraint in \eqref{eq:BoxCons} is necessary to introduce additional variable splitting, which allows much faster convergence. This type of additional splitting is quite often used in ADMM.  Finally, the relative change of the cost function in \eqref{eq:CondRecon2} is used as a stopping criterion, i.e., the algorithm is executed until $\left| \left(C_{k}-C_{k-1}\right)/C_{k} \right| < 10^{-4}$ is satisfied, where $C_{k}$ is the cost function at the $k$-th iteration. With these choices of parameters, the relevant pseudo-code implementation of C-SALSA for the resolution of the  unconstrained problem \eqref{eq:UnconstrainedProb} is provided in Algorithm \ref{alg:PseudocodeCSALSA}. 
 
\begin{algorithm}[!htb]
\caption{C-SALSA for parameter recovery in elasticity imaging.}
\label{alg:PseudocodeCSALSA}
\begin{algorithmic}[1]
\State Set $k=0$, choose $\tau>0$.

\State Set $\Ba_0^{(i)}=\Bb_0^{(i)}=0$, for $i=1,2$.

\Repeat
\State $\br_k = \widetilde{\zeta}(\Ba_k^{(1)} + \Bb_k^{(1)}) + \widetilde{\mathbf{\Pi}}^*(\Ba_k^{(2)} + \Bb_k^{(2)})$.

\State $\mathfrak{Z}_{k+1} = \left[(1+{\widetilde{\zeta}}^2) \I+\widetilde{\mathbf{\Pi}}^*\widetilde{\mathbf{\Pi}}\right]^{-1}\br_k$.

\State $\Ba_{k+1}^{(1)} = \Psi_{\tau f_1} ( \widetilde{\zeta} \mathfrak{Z}_{k+1} - \Bb_k^{(1)} )$.

\State $\Ba_{k+1}^{(2)} = \Psi_{f_2} ( \widetilde{\mathbf{\Pi}} \mathfrak{Z}_{k+1} - \Bb_k^{(2)} )$.

\State $\Bb_{k+1}^{(1)} = \Bb_{k}^{(1)} - \widetilde{\zeta} \mathfrak{Z}_{k+1} + \Ba_{k+1}^{(i)}$.

\State $\Bb_{k+1}^{(2)} = \Bb_{k}^{(2)} - \widetilde{\mathbf{\Pi}}\mathfrak{Z}_{k+1} + \Ba_{k+1}^{(i)}$.

\State $k \longleftarrow k + 1$.
\Until{Stopping criterion is satisfied.}
\end{algorithmic}
\end{algorithm}

\section{Numerical validation of reconstruction scheme}\label{s:NS}

In this section, some numerical experiments are performed in order  to validate the proposed reconstruction scheme.  Let us first provide the details of the numerical scheme for the  forward model in Section \ref{ss:forward}. The examples of the reconstruction of different inclusions are furnished in Section \ref{ss:tests}.

\subsection{Forward solver}\label{ss:forward}

In order to solve the forward problem for data acquisition, the boundary layer potential technique is used together with the so-called \emph{Nystr\"{o}m discretization scheme}. Let us briefly fix the ideas about the resolution of the forward problem. For simplicity, a two- dimensional case is considered. Note that the vector space  $\Psi$ in two-dimensions is given by 
$$
\Psi={\rm Span}
\left\{
\begin{pmatrix}
1\\0
\end{pmatrix},
\begin{pmatrix}
0\\1
\end{pmatrix},
\begin{pmatrix}
x_2\\-x_1
\end{pmatrix}
\right\}.
$$
For $M$ distinct points $\bz_1,\cdots, \bz_M\in\RR^2\setminus \overline{\Omega}$, generate 
\begin{equation}\label{eqn:truesolution}
\bU_m(\bx)=
\bGam(\bx-\bz_m)\cdot
\begin{pmatrix}
    1\\0
    \end{pmatrix}
    +\alpha_1
    \begin{pmatrix}
    1\\0
    \end{pmatrix}
    +\alpha_2
    \begin{pmatrix}
    0\\1
    \end{pmatrix}
    +\alpha_3
    \begin{pmatrix}
    x_2\\-x_1
    \end{pmatrix},
\end{equation}
where $\alpha_1,\alpha_2,\alpha_3\in\RR$ are chosen in such a way that $\bU_m |_{\partial\Omega}\in L^2_{\Psi}(\partial\Omega)$, i.e.,
\begin{equation*}
\int_{\partial\Omega}\bU_m\cdot
\begin{pmatrix}
1\\0
\end{pmatrix}d\sigma=
\int_{\partial\Omega}\bU_m\cdot
\begin{pmatrix}
0\\1
\end{pmatrix}d\sigma=
\int_{\partial\Omega}\bU_m\cdot
\begin{pmatrix}
x_2\\-x_1
\end{pmatrix}d\sigma=0.
\end{equation*}
The surface traction $\bg_m$, $m=1,\cdots, M$, is then calculated by the relation $\bg_m={\partial\bU_m}/{\partial\bnu}|_{\partial\Omega}$. Remark that $\OL_{\lambda_0,\mu_0}[\bU_m]=0$ in $\Omega$ and consequently $\bg_m\in L^2_{\Psi}(\partial \Omega)$.

In order to generate the displacement field $\bu_m$ in the presence of inclusions, $D_1,\cdots, D_N$, the transmission problem \eqref{Prob_Trans2} is solved using a layer potential technique. Towards this end,  the single layer potential  associated to the linear elasticity operator $\OL_{\lambda_0,\mu_0}$  is defined by
\begin{eqnarray*}
\mathcal{S}_\Omega[\bphi](\bx):=\int_{\partial \Omega}\bGam(\bx-\by)\cdot\bphi(\by)d\sigma(\by), \qquad \bx\in\RR^2,  \quad \bphi\in L^2(\partial \Omega)^2.
\end{eqnarray*}
It is well known (see, for instance,  \cite{Dahlberg}) that $\mathcal{S}_\Omega[\bphi]$ satisfies the  jump relations  
\begin{align*}
\frac{\partial}{\partial \bnu}\mathcal{S}_\Omega[\bphi]\Big|_{\pm}(\bx)=\left(\pm\frac{1}{2}\mathcal{I}+\mathcal{K}^*_\Omega\right)\bphi(\bx), \qquad{\rm a.e.}\quad\bx\in\partial \Omega,
\end{align*}
where $\Kcal^*_\Omega$ is the $L^2-$adjoint operator of $\Kcal_\Omega$ and is defined by
\begin{equation*}
\Kcal^*_\Omega[\bphi](\bx) := {\rm p.v.} \ds\int_{\partial \Omega}\frac{\partial}{\partial\bnu_\bx}\bGam (\bx-\by)\cdot\bphi(\by)d\sigma(\by),\quad{\rm a.e.}\qquad\,\bx\in\partial\Omega, \quad\bphi\in L^2(\partial\Omega)^2.
\end{equation*}
Then, the solution to the system\eqref{Prob_Trans2} can be represented as (see, e.g., \cite[Theorem 6.15]{AK-Book})
\begin{equation}\label{eqn:representationtransmission}
\bu_m(\bx)=\begin{cases}
         \mathcal{S}_\Omega[\bfeta_m]+\mathcal{S}_D[ \bpsi_m](\bx) , & \mbox{if } \bx\in\Omega\setminus\overline{D}, 
         \\
         \widetilde{\mathcal{S}}_D[\bphi_m](\bx), & \bx\in D,
       \end{cases}
\end{equation}
where  $(\bphi_m,\bpsi_m,\bfeta_m)\in L^2(\partial D)^2\times L^2_{\Psi}(\partial\Omega)^2\times L^2(\partial D)^2$ is the unique solution to  
\begin{equation}\label{eqn:systemtwo}
\begin{bmatrix}
 \ds\ds \widetilde{\mathcal{S}}_D|_{\partial\D} 
 & 
\ds -\mathcal{S}_D|_{\partial\D} 
 & 
 \ds -\mathcal{S}_\Omega|_{\partial\D} 
  \\
 \ds \left(-\frac{1}{2}\mathcal{I}+\widetilde{\Kcal}^*_D\right) 
 & 
 -\ds\left(\frac{1}{2}\mathcal{I}+\Kcal^*_D\right) 
 & 
 -\ds \frac{\partial}{\partial\bnu}\mathcal{S}_\Omega|_{\partial D}
  \\
 \ds 
  0
  & 
  \ds\frac{\partial}{\partial\bnu}\mathcal{S}_D|_{\partial\Omega} 
  & 
  \ds \left(-\frac{1}{2}\mathcal{I}+\mathcal{K}_{\Omega}^*\right)
\end{bmatrix}
\begin{bmatrix}
  \bphi_m
  \\
  \bpsi_m
  \\
  \bfeta_m
\end{bmatrix}
=\begin{bmatrix}
   0
   \\
   0
   \\
   \bg_m
 \end{bmatrix},
\end{equation}
subject to the constraint $\bu_m\in L^2_\Psi(\partial\Omega)$, thanks to the transmission and the boundary conditions. Here $\widetilde{\mathcal{S}}_D$ and $\widetilde{\mathcal{K}}_D^*$ are the operators related to the interior parameters $(\lambda, \mu)$.

The aim here is to solve the system \eqref{eqn:systemtwo} for $(\bphi_m,\bpsi_m,\bfeta_m)$  and then evaluate  $\bu_m|_{\partial \Omega}$ using representation \eqref{eqn:representationtransmission}. 
In order to numerically  solve system \eqref{eqn:systemtwo}, express $\mathcal{K}^*_\Omega$ as
\begin{equation}\label{eqn:parametrizedKomeg}
\Kcal^*_{\Omega}[\f](\bx)={\rm p.v.}\int_{0}^{1}\frac{\partial}{\partial\bnu_{\bx }}\bGam(\bx -\bx(t))\cdot\f(\bx(t)) |\bx'(t)| dt, \quad\, \bx\in\partial D,
\end{equation}
where $\bx(t)$ is a parametrization of $\partial \Omega$.
Then the Nystr\"{o}m discretization, with $P$ boundary points $\{\bx_p\}_{p=1}^{P}$ and weights $w_p$,  renders 
\begin{equation}\label{eqn:discretizationKOmeg}
\Kcal^*_{\Omega}[\f](\bx)\approx\sum_{p=1}^{P}\frac{\partial}{\partial\bnu_\bx}\bGam(\bx-\bx_p)\cdot\f_p |\bx'_p| w_p, \quad\,\bx\in\partial\Omega,
\end{equation}
where  $\f_p=\f(\bx_p)$ for $p=1,\cdots, P$.  
Here, the simplest quadrature rule 
$$
\ds 
\int_{0}^{1}\f(t)dt\approx\sum_{p=1}^{P}\frac{1}{P}\f\left(\frac{p}{P}\right),
$$ 
is used. It is emphasized that the integral \eqref{eqn:parametrizedKomeg}  is defined as the Cauchy principal value. Thus, the singularity of ${\partial}[\bGam(\bx-\bx_p)]/{\partial\bnu_\bx}$ at $\bx=\bx_p$ should be evaluated in the sense of the Cauchy principle value. The numerical computation of $\Kcal^*_\Omega$ can be realized using \eqref{eqn:discretizationKOmeg}. Similarly,  $\Kcal_{\Omega}$, $\mathcal{S}_\Omega$ and $\mathcal{D}_{\Omega}$ can be descretized as
\begin{align*}
\Kcal_{\Omega}[\f](\bx) 
&
\approx\sum_{p=1}^{P}\frac{\partial}{\partial\bnu(\bx_p)}\bGam(\bx-\bx_p)\cdot\f_p|\bx'_p| w_p, \quad\,\bx\in\partial\Omega,
\\
\mathcal{S}_{\Omega}[\f](\bx)
&
\approx\sum_{p=1}^{P}\bGam(\bx-\bx_p)\cdot\f_p |\bx'_p| w_p, \quad\bx\in \Omega,
\\
\D_{\Omega}[\f](\bx)
&
\approx\sum_{p=1}^{P}\frac{\partial}{\partial\bnu(\bx_p)}\bGam(\bx-\bx_p)\cdot\f_p |\bx'_p| w_p, \quad\bx\in \Omega.
\end{align*}
Consequently, the integral system \eqref{eqn:systemtwo} can be discretized and solved for $(\bphi_m,\bpsi_m,\bfeta_m)$. 
If only sparsely sampled data $(\bu_m-\bU_m)$ are available, 
the preprocessing of the data $(\bu_m-\bU_m)$ using the Caler\'{o}n preconditioner $(-\mathcal{I}/2+\mathcal{K}_\Omega)$ can be done in the similar fashion after being interpolated to  dense samples
as discussed in Section \ref{ss:Calderon}.

\subsection{Numerical experiments}\label{ss:tests}
 
For numerical examples, let the background domain $\Omega$ to be an ellipse of semi-major and semi-minor axes $10mm$ and $7mm$ respectively with shear and compression moduli  $\mu_0=\lambda_0=1GPa$. Three different kinds of inclusions are considered for numerical experiments. Precisely, the examples of sparse, extended and thin or worm-like inclusions are taken into account.  The sparse inclusions are modeled with three unit disks.  The extended inclusion is modeled with a non-convex kite shaped domain with the size comparable to that of the background domain in order of magnitude. By thin or worm-like inclusions, it is meant that  one dimension of the inclusions is  much smaller than the other dimension. The examples of straight and curved thin inclusions are dealt with. The test geometries are delineated in Figure \ref{fig:targets}.
The field of view is discretized to have a grid size $1/3mm$ for all reconstructions. The Lam\'{e} parameters of the three inclusions in the sparse case are both fixed at $7GPa$ for the leftmost inclusion, $2GPa$ for the inclusion in the middle, and $2.5GPa$ for the rightmost inclusion. For the rest of the examples, the Lam\'{e} parameters of the targets are both fixed at $2GPa$.  
\begin{figure}[!htb] 	
\centering
\includegraphics[width=0.95\textwidth]{./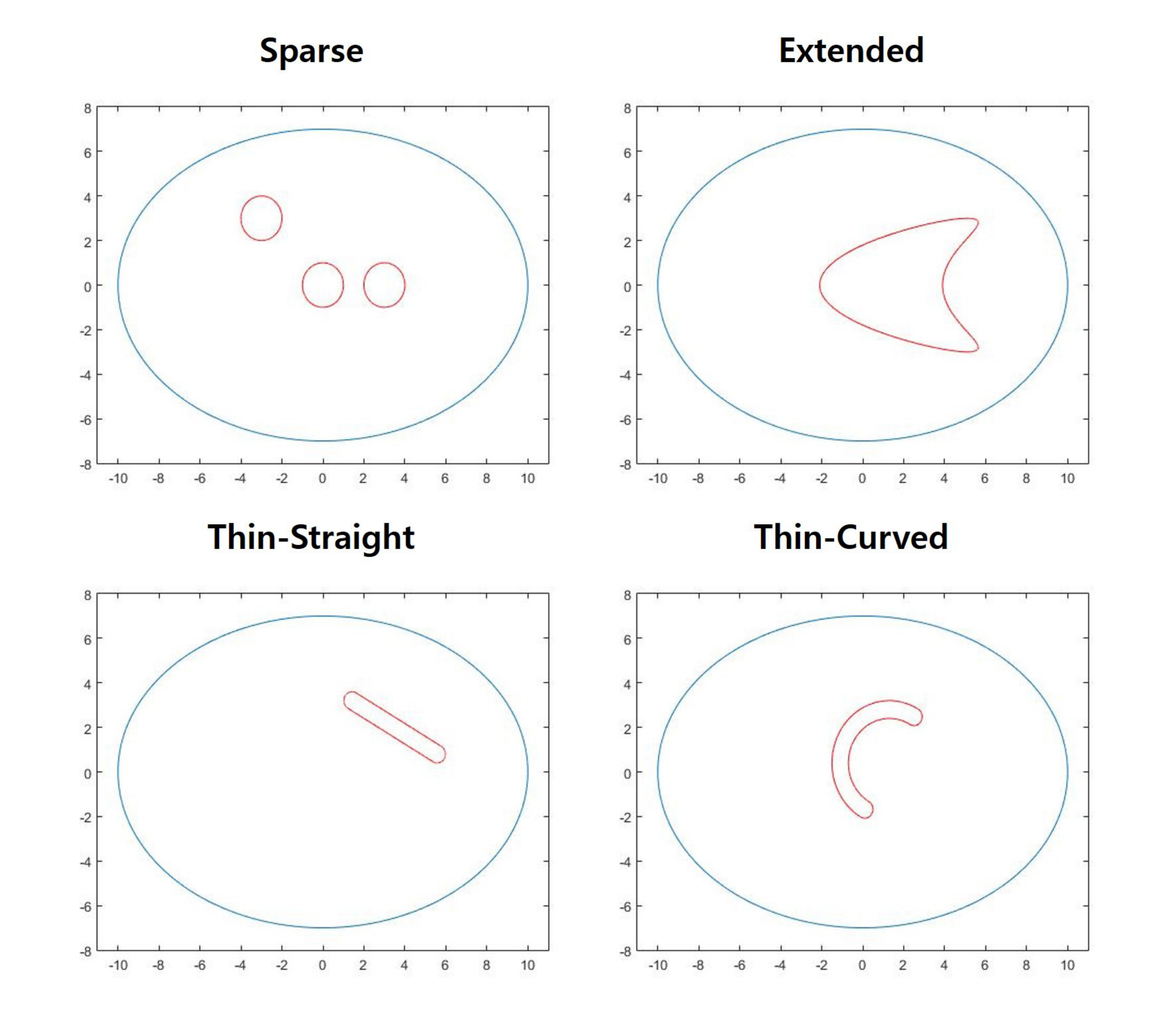} 
\caption{Geometric configuration and different test inclusions. }
\label{fig:targets}
\end{figure}

\subsubsection{Parameter selection}

For all experiments, four measurement sets are used, i.e., $M=4$. Accordingly, points $\bz_1=(12,11)$, $\bz_2=(9,-11)$, $\bz_3=(-1,8)$ and $\bz_4=(-50,0)$ are used in \eqref{eqn:truesolution} to define  $\{\bU_m\}_{m=1}^4$ and $\{\bg_m\}_{m=1}^4$. The forward data is acquired using the numerical  scheme described in Section \ref{ss:forward}. $P=2000$ discretization points on $\partial\Omega$ and $\partial D_n$, for each $n$,  are used for the example of sparse inclusions, and $P=5000$ points are used for rest of the examples. Three different sets of the uniformly distributed full view measurement points $\{\bx_r\}_{r=1}^R$ with $R=100$, $32$, $16$ and  a set of limited view (with angle $3\pi/2$) measurement  points  on $\partial \Omega$ with $R=16$  are taken into account. The latter case is indicated hereinafter by $R=16p$. The measurement setups are depicted in Figure \ref{fig:meas}. An additive Gaussian noise with signal-to-noise ratio $40dB$ was added to the boundary measurement vectors $(\bu_m-\bU_m)\big|_{\partial\Omega}$ for all simulations.  
\begin{figure}[!htb] 	
\centering
\includegraphics[width=0.95\textwidth]{./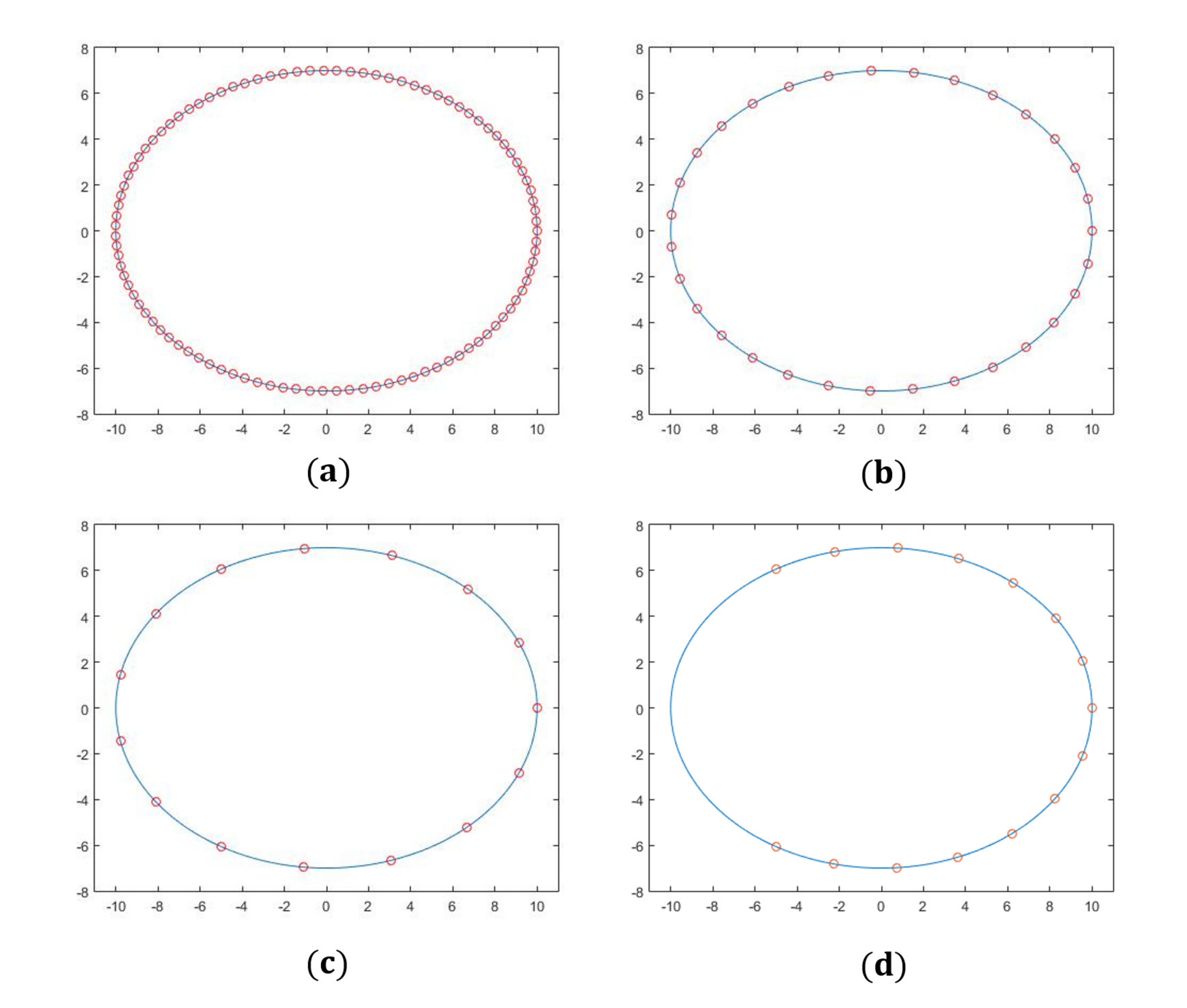} 
\caption{Configuration of the measurement points on the boundary of $\Omega$. (a) $R=100$. (b) $R=32$. (c) $R=16$.  (d) $R=16p$.}
\label{fig:meas}
\end{figure}

In order to recover the density $\mathfrak{X}$ over the support $\hat{D}$, the modified M-SBL Algorithm \ref{alg:PseudocodeMSBL} is applied on the preconditioned problem \eqref{eq:preconProblem} using  regularization parameter $\theta=10^{-2}\times\sigma_{\max}^2$, where $\sigma_{\max}$ denotes the maximum singular value of the sensing matrix $\Pi$. The threshold parameter $\varrho$ in Algorithm \ref{alg:PseudocodeMSBL} is set to be $\varrho=10^{-3}$. 

For support identification, the pruning parameter $\xi$ in \eqref{eq:SpecMSBL2} is set to be $\xi=0$, i.e., small values are not pruned out and the obtained information is fully utilized in order to avoid a sub-optimal selection of $\xi$. The box constraint parameters $\theta_{\min}$ and $\theta_{\max}$ in \eqref{eq:BoxCons} are set to be very large so that $[\mathfrak{Z}]_i$ can simply assume values in $(-\infty,+\infty)$. 

The selected optimal values of the parameters for Algorithms \ref{alg:PseudocodeMSBL} and  \ref{alg:PseudocodeCSALSA} are summarized in Table \;\ref{tbl:ParameterSimul}. These parameters are used for all examples except for data fidelity parameter  $\widetilde{\zeta}$, which is gradually decreased in value with respect to the decrease in the number of measurement  points. 
\begin{table}[!htb]
\footnotesize
\caption{Parameters choices for numerical simulations.}
\label{tbl:ParameterSimul}
\vspace*{0.2cm}
\centerline{
\renewcommand{\arraystretch}{1.3}
\begin{tabular}{|c|c|c|c|c|} \hline
                Proposed Method & Sparse target & Thin-Straight target & Thin-Curved target  &  Extended target \\
\hline \hline
\multirow{2}{*}{Step 1: M-SBL}& $\mbox{Iter}_{\max}=50$   & $\mbox{Iter}_{\max}=50$   & $\mbox{Iter}_{\max}=50$  & $\mbox{Iter}_{\max}=50$ \\
                & with preconditioning   & with preconditioning   & with preconditioning & with preconditioning\\
\cline{1-5}
\multirow{3}{*}{Step 2: C-SALSA} & $\tau= 0.1|\bar{\mathfrak{Z}}_0|$  & $\tau= 0.1|\bar{\mathfrak{Z}}_0|$  & $\tau=0.1|\bar{\mathfrak{Z}}_0|$   & $\tau=0.1|\bar{\mathfrak{Z}}_0|$\\
                & $\eta=0.3\|\widetilde{\mathbf{Y}}\|_2$ & $\eta=0.3\|\widetilde{\mathbf{Y}}\|_2$ & $\eta=0.3\|\widetilde{\mathbf{Y}}\|_2$ & $\eta=0.3\|\widetilde{\mathbf{Y}}\|_2$ \\
                & $\widetilde{\zeta} = 2~(R=100)$ & $\widetilde{\zeta} = 4~(R=100)$ & $\widetilde{\zeta} = 4~(R=100)$  & $\widetilde{\zeta}= 8~(R=100)$\\
                 & $\widetilde{\zeta} = 1/2~(R=32)$ & $\widetilde{\zeta} = 2~(R=32)$ & $\widetilde{\zeta} = 1~(R=32)$  & $\widetilde{\zeta}= 4~(R=32)$\\
                & $\widetilde{\zeta} = 1/4~(R=16)$ & $\widetilde{\zeta} = 1~(R=16)$ & $\widetilde{\zeta} = 1/2~(R=16)$  & $\widetilde{\zeta}= 2~(R=16)$\\	
                & $\widetilde{\zeta} = 1/4~(R=16p)$ & $\widetilde{\zeta} = 1/2~(R=16p)$ & $\widetilde{\zeta} = 1/2~(R=16p)$  & $\widetilde{\zeta}= 2~(R=16p)$\\	
\hline
\noalign{\hrule height 0.5pt}
\end{tabular}
}
\normalsize
\end{table}

\subsubsection{Simulation results}

The reconstructed shear and compression moduli for different inclusions together with estimated support of the inclusions are provided in Figures \ref{fig:recon-3}--\ref{fig:recon-kite} for sparse, thin straight, thin curved and extended inclusions respectively. For all the listed inclusions, the results  are furnished with different configurations of measurement points (with $R=100$, $32$, $16$, $16p$) as precised earlier. When $R=100$, the proposed algorithm clearly recovered the structures of the inclusions and their  constitutive parameters in all cases. For instance, for sparse inclusions, even though the parameter values have been varied by tuning the optimization parameters, their relative relationships remained the same so that the leftmost inclusion always appears to have the highest value and the middle one has the lowest value (see Figure \ref{fig:recon-3}). 
It is observed that the overall reconstruction performance gradually suffers when the number of measurement points decreases.  Nevertheless, the results corresponding to $R=32$ are comparable to those of $R=100$. Even when $R=16$, the simulations are mostly able to indicate the crude shapes of  the inclusions.

 However, when the measurement points only cover the partial aperture ($R=16p$ and $3\pi/2$ angle of view), the results are distorted. 
 The reconstructions for thin and extended targets show comparatively less accurate
results than those for the sparse targets.
  It is worthwhile to mention that M-SBL was still able to localize the anomalies even in the deteriorated conditions. In the deteriorated cases from the partial aperture, although the M-SBL algorithm recovered the locations outside the expected regions, the estimated M-SBL values are relatively higher inside and near the boundary of the inclusions than spurious detected regions outside the inclusions (see Figure \ref{fig:MSBL-supp}). The points outside the inclusions with small values can be easily filtered by appropriately tuning the pruning parameter $\xi$.
\begin{figure}[!htb] 	
\centering
\includegraphics[width=0.95\textwidth]{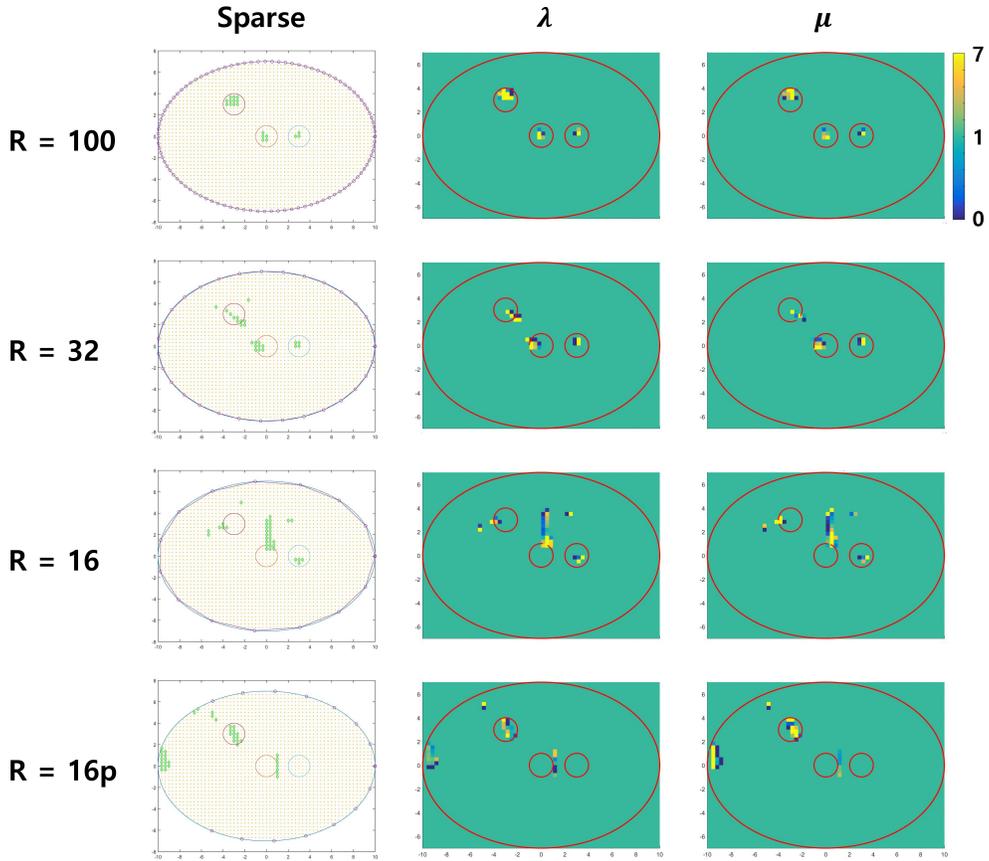}
\caption{Reconstruction of multiple disk-like inclusions. Left to Right: $\hat{D}$,  $\lambda$, and $\mu$. 
}
\label{fig:recon-3}
\end{figure}

\begin{figure}[!htb] 	
\centering
\includegraphics[width=0.95\textwidth]{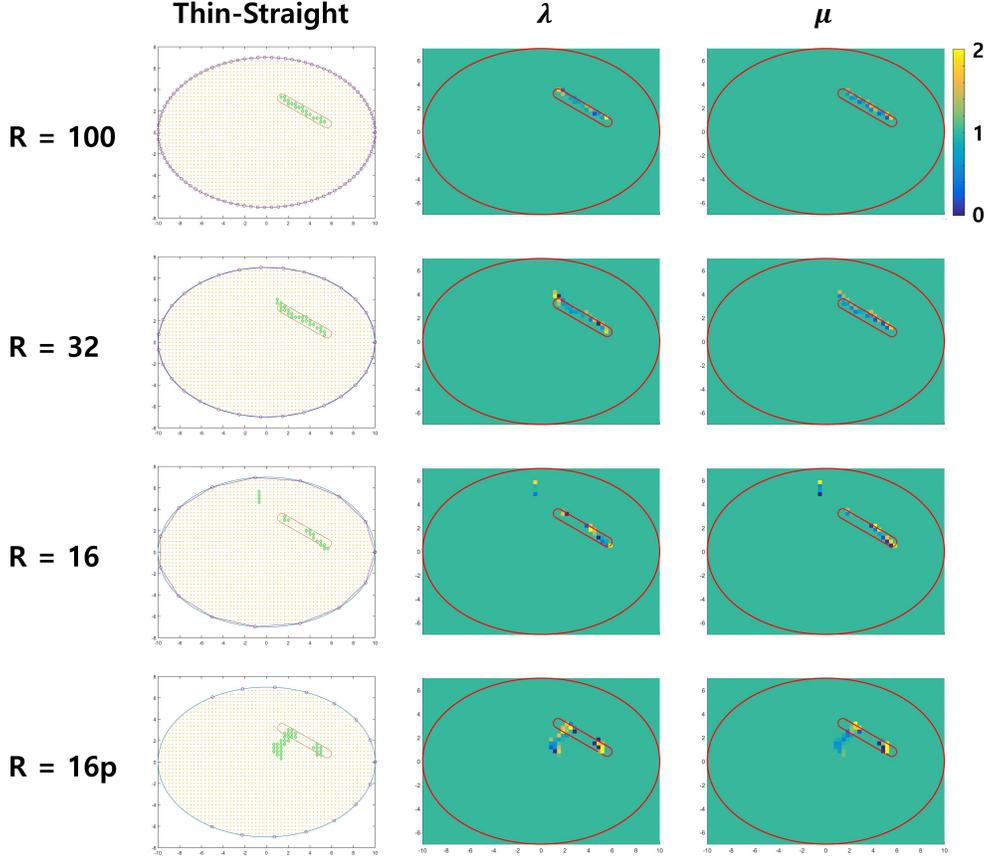}
\caption{Reconstruction of a thin-straight inclusion. Left to Right: $\hat{D}$,  $\lambda$, and $\mu$. 
} 
\label{fig:recon-thin1}
\end{figure}

\begin{figure}[!htb] 	
\centering
\includegraphics[width=0.95\textwidth]{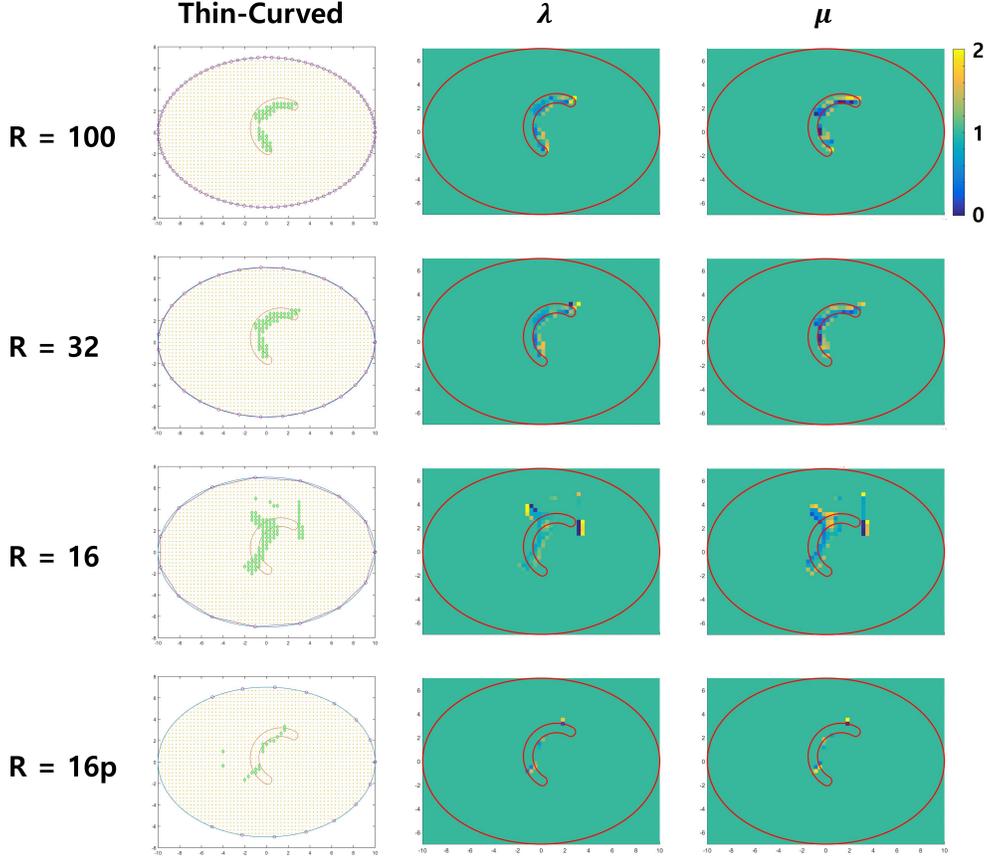}
\caption{Reconstruction of a thin-curved inclusion. Left to Right: $\hat{D}$,  $\lambda$, and $\mu$. 
}
\label{fig:recon-thin2}
\end{figure}

\begin{figure}[!htb] 	
\centering
\includegraphics[width=0.95\textwidth]{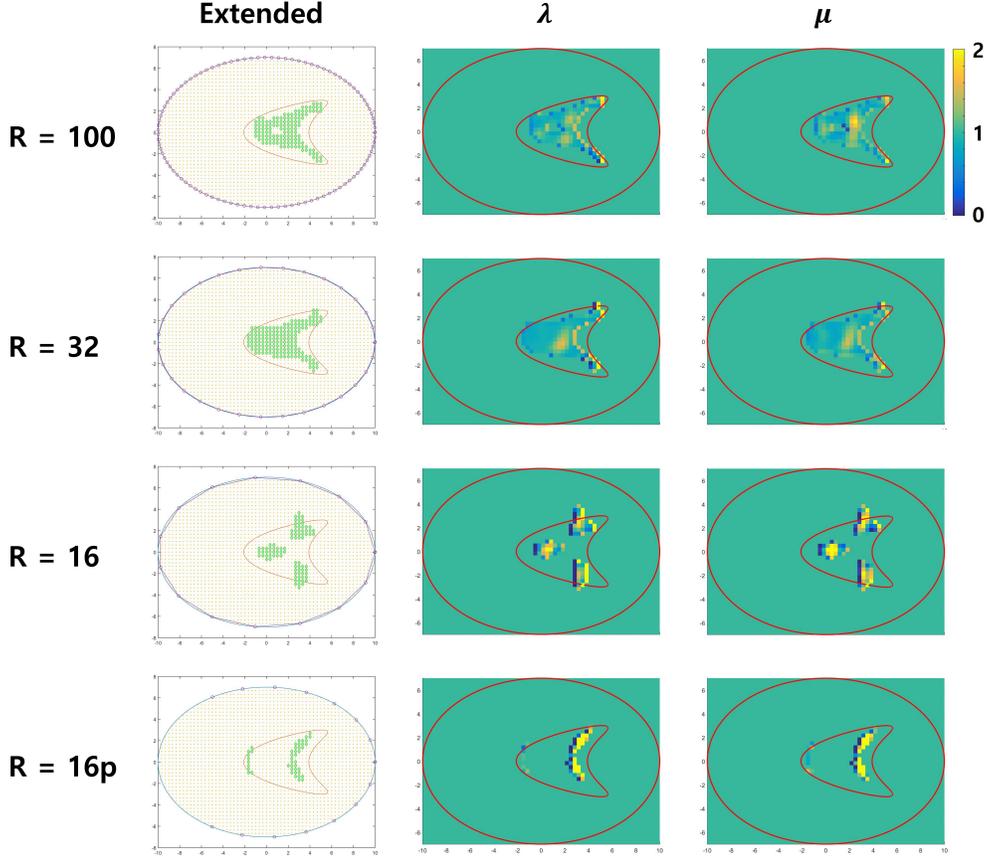} 
\caption{Reconstruction of an extended inclusion. Left to Right: $\hat{D}$,  $\lambda$, and $\mu$. 
}
\label{fig:recon-kite}
\end{figure}

\begin{figure}[!htb] 	
\centering
\includegraphics[width=0.95\textwidth]{./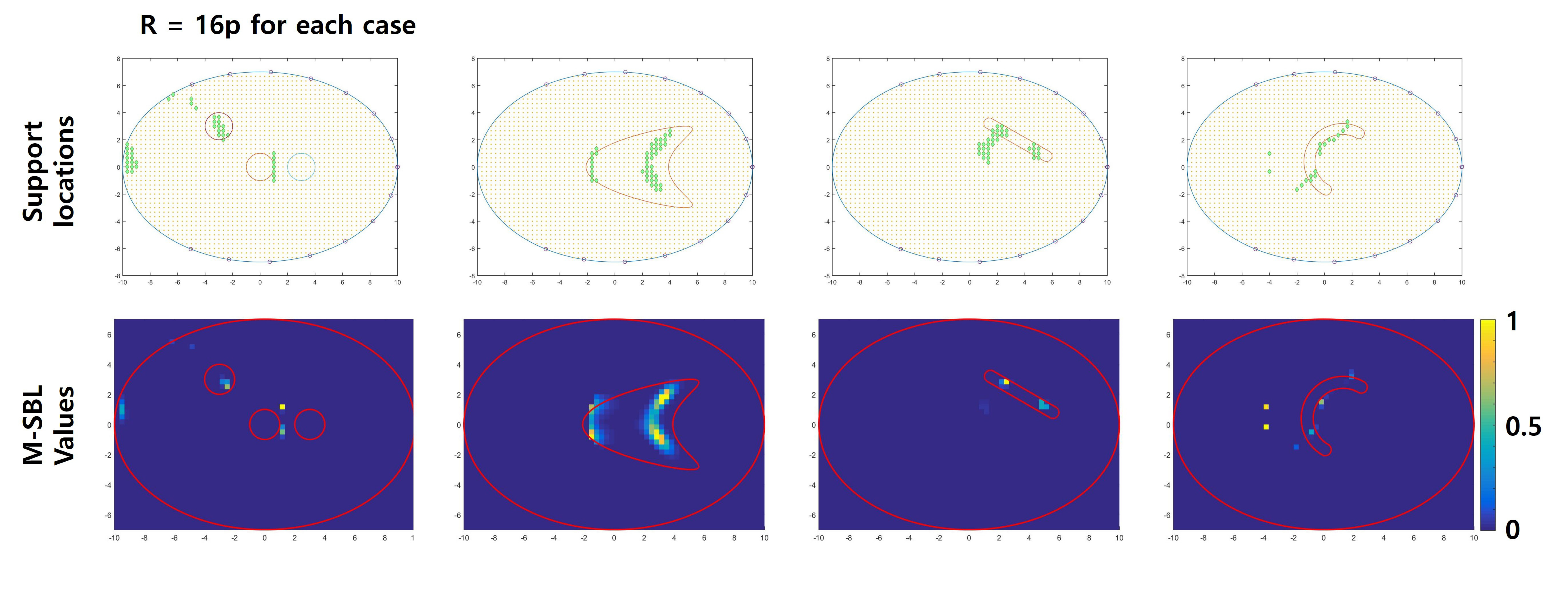} 
\caption{Support identification using M-SBL in deteriorated cases. Top:  Reconstructed support sets. Bottom: The normalized M-SBL values $\psi_\ell/\max_{i\in\{1,\cdots, L\}}(\psi_i)$.
}
\label{fig:MSBL-supp}
\end{figure}

\section{Conclusion}\label{s:conc}

A joint sparse recovery based direct algorithm was proposed to reconstruct the spatial support of multiple elastic inclusions and their material parameters using only a few measurements of the displacement over a very coarse grid of boundary points (in the sense of Nyquist sampling rate). The inverse problem for support detection was converted to a joint sparse recovery problem for internal data (linked to the displacement and strain fields inside the support set of the inclusions) by virtue of an integral formulation. The sparse signal recovery problem resulting therefrom was resolved  for an exact and unique solution by invoking a modified M-SBL algorithm with structural constraints. Then, using the leverage of the learned internal  information about the displacement field, a linear inverse problem for quantitative evaluation of material parameters was formulated. The resulting problem was  then converted to a noise robust constraint optimization problem, which was subsequently solved using the C-SALSA. The proposed imaging algorithm was computationally very efficient and was felicitous to demonstrate very accurate reconstruction  since it is non-iterative and does not require any linearization or computations of multiple forward solutions. 
The advantage is taken here of the learned internal data and the sparsity of the support set of the inclusions inside the elastic medium to reduce the mathematical ill-posedness of the underlying inverse problem. In fact, the recovery of such an information is very novel and pertinent. This additional information compensates for the under-determined data and therefore renders stability to the reconstruction framework. In addition, since no linearization or simplifying approximations are used,  the proposed technique provides reconstruction with better resolution and quality than classical techniques. However, a more sophisticated  quantitative mathematical analysis is certainly necessary in order to ascertain the stability and resolution properties of the proposed framework in terms of the relative size of the inclusion, the number of measurement fields, the number and the placement of the measurement points on the  boundary, and the aperture size.  This will be the subject of future investigations. Albeit, the elastostatic problem is  undertaken in this article, the quasi-static or time-harmonic elasticity problems are also amenable to the same treatment with minor changes. Moreover, the elasticity imaging problem in the so-called quasi-incompressible regime can also be dealt with and will be investigated in future.

\appendix
\section{Evaluation of integral kernals}\label{Append}

Let us provide the explicit expressions for different kernels involved in our integral formulation and those required to compute the sensing matrix of the reconstruction framework. For brevity, only the two dimensional case is entertained. 

Following identities will be handy in ensuing calculations. For all $i,j\in\{1,2\}$ and $\bx,\by\in\RR^2$, such that  $\bx\neq\by$,  
\begin{equation}
\begin{cases}
\ds\frac{\partial}{\partial y_i}\ln|\bx-\by| 
= -\frac{x_i-y_i}{|\bx-\by|^2},
\\
\ds\frac{\partial^2}{\partial y_i\partial y_j}\ln|\bx-\by|
= -2\frac{(x_i-y_i)(x_j-y_j)}{|\bx-\by|^4}+\delta_{ij}\frac{1}{|\bx-\by|^2},
\\
\ds \Delta_\by\ln|\bx-\by|
=0.
\end{cases}\label{identities}
\end{equation}

\subsection{Surface traction of Kelvin matrix and  boundary integral operators}

Recall from  \cite[Appendix A]{AKLL} that, for all $\bx,\by\in\RR^2$, $\bx\neq \by$ and $i,j\in\{1,2\}$, 
\begin{align}
\left(\frac{\partial\bGam}{\partial\bnu_\by}(\bx,\by)
\right)_{ij}
=&\left[a\delta_{ij}+b\frac{(x_i-y_i)(x_j-y_j)}{|\bx-\by|^2}\right]\left(\sum_{k=1}^2\frac{\nu_k(x_k-y_k)}{|\bx-\by|^2}\right)
\nonumber
\\
&-a\frac{\nu_j(x_i-y_i)-\nu_i(x_j-y_j)}{|\bx-\by|^2},
\label{GammaTraction}
\\
\nu_i=&[\bnu]_i,
\quad
a:=-\frac{\mu_0}{2\pi(\lambda_0+2\mu_0)},
\quad\text{and}\quad
b:=-\frac{\lambda_0 + \mu_0}{\pi(\lambda_0+2\mu_0)}.
\end{align}
 
Therefore, thanks to \eqref{GammaTraction},  operators $\mathcal{S}_\Omega$, $\mathcal{D}_\Omega$  and  $\mathcal{K}_\Omega$  can be evaluated as
\begin{align*}
&\mathcal{S}_\Omega[\bphi](\bx):= \ds\left(\sum_{i,j=1}^2 \int_{\partial \Omega}\left[\bGam(\bx-\by)\right]_{ij}\cdot\left[\bphi(\by)\right]_jd\sigma(\by)\right)\mathbf{e}_i,\quad\,\bx\in\RR^2\setminus\partial\Omega,
\\ 
&\mathcal{D}_\Omega[\bphi](\bx):= \ds\left(\sum_{i,j=1}^2 \int_{\partial \Omega}\left[\frac{\partial}{\partial\bnu_\by}\bGam(\bx-\by)\right]_{ij}\cdot\left[\bphi(\by)\right]_jd\sigma(\by)\right)\mathbf{e}_i,\quad\,\bx\in\RR^2,
\\ 
&\Kcal_\Omega[\bphi](\bx):= \ds\left(\sum_{i,j=1}^2{\rm p.v.}\int_{\partial \Omega}\left[\frac{\partial}{\partial\bnu_\by}\bGam(\bx-\by)\right]_{ij}\cdot\left[\bphi(\by)\right]_jd\sigma(\by)\right)\mathbf{e}_i,\quad{\rm a.e.}\quad\,\bx\in\partial\Omega,
\end{align*}
for all $\bphi\in L^2_\Psi(\partial\Omega)$.

\subsection{Divergence of Kelvin matrix}

It is reminded that (see, e.g., \cite[Lemma 3.3.2]{ammaribook})
\begin{align}
\nabla_\by\cdot\bGam(\bx,\by)=\frac{1}{(\lambda_0+2\mu_0)}\nabla_\by\Phi(\bx,\by)=\frac{1}{(\lambda_0+2\mu_0)}\sum_{i=1}^2\frac{\partial}{\partial y_i}\Phi(\bx,\by)\mathbf{e}_i,
\label{DivGamma1}
\end{align}
where $\Phi(\bx,\cdot):\RR^2\to\RR$, for fixed $\bx\in\RR^2$, is the fundamental solution to the Laplace equation in $\RR^2$, that is, 
$\Delta_\by\Phi(\bx,\by)= \delta_\bx(\by)$,  for all $\bx,\by\in\RR^2$
and is given by 
\begin{eqnarray*}
\Phi(\bx,\by)= \ds\frac{1}{2\pi}\ln |\bx-\by|,\quad \forall \bx,\by\in\RR^2, \quad \bx\neq \by.
\end{eqnarray*}
After fairly easy manipulations and using identities  \eqref{identities} in \eqref{DivGamma1}, one arrives at
\begin{align*}
\nabla_\by\cdot\bGam(\bx,\by)=\frac{a}{\mu_0}\sum_{i=1}^2\frac{(x_i-y_i)}{|\bx-\by|^2}\mathbf{e}_i,\quad \bx,\by\in\RR^2, \quad \bx\neq\by.
\end{align*}

\subsection{Strain of Kelvin matrix}

In order to calculate $\Big[\mathcal{E}[\bGam(\bx,\cdot)](\by)\Big]_{ijk}$, express $\bGam$ as
\begin{align*}
\gamma_{ij}(\bx-\by)= \alpha\delta_{ij}\ln|\bx-\by|+\beta(x_i-y_i)\frac{\partial}{\partial y_j}\ln|\bx-\by|,\quad \bx,\by\in\RR^2,\quad\bx\neq\by,
\end{align*}
where $\alpha$ and $\beta$ are given by \eqref{AlphaBeta}.
Therefore, 
\begin{align*}
&\frac{\partial}{\partial y_k}
\gamma_{ij}(\bx-\by)
\\
=&
\phantom{-1}\alpha\delta_{ij}\frac{\partial}{\partial y_k}\ln|\bx-\by|+\beta\frac{\partial}{\partial y_k}(x_i-y_i)\frac{\partial}{\partial y_j}\ln|\bx-\by|+\beta(x_i-y_i)\frac{\partial^2}{\partial y_k\partial y_j}\ln|\bx-\by| 
\\
=& -\alpha\delta_{ij}\frac{(x_k-y_k)}{|\bx-\by|^2}+\beta\delta_{ik}\frac{(x_j-y_j)}{|\bx-\by|^2}+
\beta\delta_{jk}\frac{(x_i-y_i)}{|\bx-\by|^2}-2\beta\frac{(x_i-y_i)(x_j-y_j)(x_k-y_k)}{|\bx-\by|^4}.
\end{align*}
Consequently, $\Big[\mathcal{E}[\bGam(\bx,\cdot)](\by)\Big]_{ijk}$ can be calculate, for all $i,j,k \in\{1,2\}$, as 
\begin{align*}
&2\Big[\mathcal{E}[\bGam(\bx,\cdot)](\by)\Big]_{ijk}
=
\left(\frac{\partial\gamma_{ij}}{\partial y_k}(\bx,\by)+\frac{\partial\gamma_{ik}}{\partial y_j}(\bx,\by)\right)
\\
=&-\alpha\delta_{ij}\frac{(x_k-y_k)}{|\bx-\by|^2}+\beta\delta_{ik}\frac{(x_j-y_j)}{|\bx-\by|^2}+
\beta\delta_{jk}\frac{(x_i-y_i)}{|\bx-\by|^2}-2\beta\frac{(x_i-y_i)(x_j-y_j)(x_k-y_k)}{|\bx-\by|^4}
\\
&-\alpha\delta_{ik}\frac{(x_j-y_j)}{|\bx-\by|^2}+\beta\delta_{ij}\frac{(x_k-y_k)}{|\bx-\by|^2}+
\beta\delta_{jk}\frac{(x_i-y_i)}{|\bx-\by|^2}-2\beta\frac{(x_i-y_i)(x_j-y_j)(x_k-y_k)}{|\bx-\by|^4} 
\\
=&
\left(\beta-\alpha\right)\left[\delta_{ij}\frac{(x_k-y_k)}{|\bx-\by|^2}+\delta_{ik}\frac{(x_j-y_j)}{|\bx-\by|^2}\right]+2\beta\delta_{jk}\frac{(x_i-y_i)}{|\bx-\by|^2}
-4\beta\frac{(x_i-y_i)(x_j-y_j)(x_k-y_k)}{|\bx-\by|^4}.
\end{align*}
Finally, by remarking that $a=\beta-\alpha$, one arrives at 
\begin{align*}
\Big[\mathcal{E}[\bGam(\bx,\cdot)](\by)\Big]_{ijk}
=&
\frac{a}{2}\left[\delta_{ij}\frac{(x_k-y_k)}{|\bx-\by|^2}+\delta_{ik}\frac{(x_j-y_j)}{|\bx-\by|^2}\right]+\beta\delta_{jk}\frac{(x_i-y_i)}{|\bx-\by|^2}
\\
&-2\beta\frac{(x_i-y_i)(x_j-y_j)(x_k-y_k)}{|\bx-\by|^4}.
\end{align*}


\begin{thebibliography}{99} 

\bibitem{Abbas} 
{\sc T. Abbas, H. Ammari, G. Hu, A. Wahab, and J. C. Ye},
{\em Two-dimensional elastic scattering coefficients and
enhancement of nearly elastic cloaking},
{J. Elast.}, DOI: 10.1007/s10659-017-9624-7.

\bibitem{Afonso2011CSALSA}
{\sc M. Afonso, J. Bioucas-Dias, and M. Figueiredo},
{\em An augmented {L}agrangian approach to the constrained optimization  formulation of imaging inverse problems},
{IEEE Trans. Image Process.}, 20 (2011), pp. 681--695.

\bibitem{ammaribook}  
{\sc H. Ammari}, 
{\em An Introduction to Mathematics of Emerging Biomedical Imaging}, 
Math. Appl. (Berlin) 62,  Springer-Verlag, Berlin, 2008.


\bibitem{TDelastic} 
{\sc H. Ammari, E. Bretin, J. Garnier, W. Jing, H. Kang, and A. Wahab}, 
{\em Localization, stability, and resolution of topological derivative based imaging functionals in elasticity}, 
SIAM J. Imaging Sci., 6 (2013), pp. 2174--2212.

\bibitem{Princeton} 
{\sc H. Ammari, E. Bretin, J. Garnier, H. Kang, H. Lee, and A. Wahab}, 
{\em Mathematical Methods in Elasticity Imaging}, 
Princeton Ser. Appl. Math., Princeton University Press, Princeton, NJ, 2015.

\bibitem{TrElastic}
{\sc H. Ammari, E. Bretin, J. Garnier, and A. Wahab}, 
{\em Time reversal algorithms in visco-elastic media}, 
Eur. J. Appl. Math., 24 (2013), pp. 565--600.

\bibitem{Garapon}
{\sc H. Ammari, P. Garapon, H. Kang, and H. Lee},
{\em A method of biological tissues elasticity reconstruction using magnetic resonance elastography measurements}, 
Quart. Appl. Math., 66 (2008), pp. 139--175.


\bibitem{AK-Book} 
{\sc H. Ammari and H. Kang},
{\em Reconstruction of Small Inhomogeneities from Boundary Measurements},
Lecture Notes in Math. 1846, Springer-Verlag, Berlin, 2004.


\bibitem{AKLL} 
{\sc H. Ammari, H. Kang, H. Lee, and J. Lim}, 
{\em Boundary perturbations due to the presence of small linear cracks in an elastic body}, 
{J. Elast.}, 113 (2013), pp. 75--91.


\bibitem{jin2012preconditioning}
{\sc J.  An, Y.  Birsen, A. Angelique, and N.  Vasilis},
{\em Preconditioning of the fluorescence diffuse optical tomography sensing matrix based on compressive sensing}, 
Opt. Lett., 37 (2012), pp. 4326--4328.

\bibitem{Bonnet08} 
{\sc S. Avril, M. Bonnet , A.-S. Bretelle, M. Gr\'{e}diac, F. Hild, P. Ienny, F. Latourte,  D. Lemosse, S. Pagano, E. Pagnacco, and F. Pierron},
{\em Overview of identification methods of mechanical parameters based on full-field measurements},
Exp. Mech., 48 (2008), pp. 381--402.

\bibitem{Bal14}
{\sc G. Bal, C. Bellis, S. Imperiale, and F. Monard},
{\em Reconstruction of constitutive parameters in isotropic linear elasticity from noisy full field measurements},
Inverse Probl., 30 (2014), 125004.

\bibitem{Bal15a}
{\sc G. Bal and S. Imperiale},
{\em Displacement reconstruction in ultrasound elastography},
SIAM J. Imaging Sci., 8 (2015), pp. 1070--1089.

\bibitem{Bal15b}
{\sc G. Bal, F. Monard, and G. Uhlmann},
{\em Reconstruction of a fully anisotropic elasticity tensor from knowledge of displacement fields}, 
SIAM J. Appl. Math., 75 (2015), pp. 2214--2231.

\bibitem{Barbone}
{\sc E. Barbone and N. H. Gokhale},
{\em  Elastic modulus imaging: On the uniqueness and nonuniqueness of the elastography inverse problem in two dimensions},
Inverse Probl., 20 (2004), pp. 203--296.


\bibitem{Bonnet}
{\sc M. Bonnet and A. Constantinescu},
{\em Inverse problems in elasticity},
Inverse Probl., 21 (2005), R1--R50.

\bibitem{CaRoTa06}
{\sc E. Candes, J. Romberg, and T. Tao},
{\em Robust uncertainty principles: Exact signal reconstruction from highly incomplete frequency information},
IEEE Trans. Inf. Theory, 52 (2006), pp. 489--509.

\bibitem{Candes2008IntroCS}
{\sc E. J. Candes and M. B. Wakin},
{\em An introduction to compressive sampling},
IEEE Signal Process. Mag., 25 (2008), pp. 21--30.


\bibitem{chen2006trs}
{\sc  J. Chen and X. Huo},
{\em Theoretical results on sparse representations of multiple measurement vectors},
IEEE Trans. Signal Process., 54 (2006), pp. 4634--4643.
 
\bibitem{Combettes}
{\sc P. L. Combettes and V. R. Wajs},
{\em Signal recovery by proximal forward-backward splitting},
SIAM J. Multiscale Model. Sim., 4 (2005), pp. 1168--1200.

\bibitem{Constantinescu}
{\sc A. Constantinescu}, 
{\em On the identification of elastic moduli from displacement force boundary measurements}, 
Inverse Probl. Eng., 1 (1995), pp. 293--315.
 
\bibitem{cotter2005ssl}
{\sc S. F. Cotter, B. D. Rao, K. Engan, and K. Kreutz-Delgado},
{\em Sparse solutions to linear inverse problems with multiple measurement vectors},
IEEE Trans. Signal Process., 53 (2005), pp. 2477--2488.
 

\bibitem{Dahlberg} 
{\sc B. E. Dahlberg, C. E. Kenig, and G. Verchota}, {\em Boundary value problem for the systems of elastostatics in Lipschitz domains}, 
Duke Math. Jour., 57 (1988), pp.
795--818.


\bibitem{Davies2012MMV}
{\sc  M. E. Davies and Y. C. Eldar},
{\em Rank awareness in joint sparse recovery},
IEEE Trans. Inf. Theory, 58 (2012), pp. 1135--1146.

 
\bibitem{DoEl03}
{\sc D. L. Donoho and M. Elad},
{\em Optimally sparse representation in general (non-orthogonal) dictionaries via $l_1$ minimization},
Proc. Nati. Acad. Sci. USA, 100 (2003), pp. 2197--2202.

\bibitem{Kang16} 
{\sc J. Eom, H. Kang, G. Nakamura, and Y.-C. Wang},
{\em Reconstruction of the shear modulus of viscoelastic systems in a thin cylinder: An inversion scheme and experiments}, 
Inverse Probl., 32 (2016), 095007.

\bibitem{Eskin}
{\sc G. Eskin and J. Ralston}, 
{\em On the inverse boundary value problem for linear isotropic elasticity},
Inverse Probl., 18 (2002), pp. 907--921.

\bibitem{Garra} 
{\sc B. S. Garra, I. Cespedes, J. Ophir, S. Spratt, R. A. Zuurbier, C. M. Magnant, and M. F. Pennanen},
{\em Elastography of breast lesions: Initial clinical results}, 
Radiology, 202 (1997), pp. 79--86.

\bibitem{Geymonat}
{\sc G. Geymonat and S. Pagano},
{\em Identification of mechanical properties by displacement field measurement: A variational approach},
Meccanica, 38 (2003), pp. 535--545.  
\bibitem{Greenleaf}
{\sc J. F. Greenleaf, M. Fatemi, and M. Insana}, 
{\em Selected methods for imaging elastic properties of biological tissues},
Annu. Rev. Biomed. Eng., 5 (2013), pp. 57--78.

\bibitem{Guchhait}
{\sc S. Guchhait and B. Banerjee},
{\em Anisotropic linear elastic parameter estimation using error in constitutive equation functional}, 
Proc. R. Soc. A, 472 (2016), 20160213.

\bibitem{Hiltawsky}
{\sc K. M. Hiltawsky, M. Kruger, C. Starke, L. Heuser, H. Ermert, and A. Jensen},
{\em  Freehand ultrasound elastography of breast lesions: Clinical results}, Ultrasound Med. Biol., 27 (2001), pp. 1461--1469.

\bibitem{Ikehata}
{\sc M. Ikehata}, 
{\em  Inversion formulas for the linearized problem for an inverse boundary value problem in elastic prospection},
SIAM J. Appl. Math., 50 (1990), pp. 1635--1644.

\bibitem{Imanuvilov}
{\sc O. Imanuvilov, G. Uhlmann, and M. Yamamoto},
{\em On uniqueness of Lam\'{e} coefficients from partial Cauchy data in three dimensions}, Inverse Probl.,  28 (2012), 125002.

\bibitem{Jin2012precond}
{\sc  A. Jin, B. Yazici, A. Ale, and V. Ntziachristos},
{\em Preconditioning of the fluorescence diffuse optical tomography sensing matrix based on compressive sensing},
Opt. Lett.,  37 (2012), pp. 4326--4328.

\bibitem{Cirrhosis}
{\sc T. Y. Kim, T. Y. Kim, Y. Kim, S. Lim, W. K. Jeong, and J. H. Sohn},
{\em Diagnostic performance of shear wave elastography for predicting esophageal varices in patients with compensated liver cirrhosis},
J. Ultrasound Med., 35 (2016), pp. 1373--1381.

\bibitem{Kim2010CMUSIC}
{\sc J. M. Kim, O. K. Lee, and J. C. Ye},
{\em Compressive {MUSIC}: Revisiting the link between compressive  sensing and array signal processing},
IEEE Trans. Inf. Theory,  58 (2012), pp. 278--301.


\bibitem{EIT} 
{\sc O. K. Lee, H. Kang, J. C. Ye, and M. Lim}, 
{\em A non-iterative method for the electrical impedance tomography based on joint sparse recovery}, 
Inverse Probl., 31 (2015), 075002.

\bibitem{Lee2011CDOT}
{\sc O. K. Lee, J. M. Kim, Y. Bresler, and J. C. Ye},
{\em Compressive diffuse optical tomography: Non-iterative exact  reconstruction using joint sparsity}, 
IEEE Trans. Med. Imag.,  30 (2011), pp. 1129--1142.

\bibitem{Lee2013DOT}
{\sc  O. Lee and J. C. Ye},
{\em Joint sparsity-driven non-iterative simultaneous reconstruction of absorption and scattering in diffuse optical tomography},
Opt. Express, 21 (2013), pp. 26589--26604.

\bibitem{Monard}
{\sc F. Monard}, 
{\em Taming unstable inverse problems: Mathematical routes toward high-resolution
medical imaging modalities}, 
PhD Thesis, Columbia University, 2012.

\bibitem{Nakamura94}
{\sc G. Nakamura and G. Uhlmann}, 
{\em Global uniqueness for an inverse boundary problem arising in elasticity}, 
Invent. Math., 118 (1994), pp. 457--474.

\bibitem{Nakamura03}
{\sc G. Nakamura and G. Uhlmann}, 
{\em Erratum: Global uniqueness for an inverse boundary problem arising in elasticity}, 
Invent. Math., 152 (2003), pp. 205--207.

\bibitem{Oberai}
{\sc A. A. Oberai, N. H. Gokhale, S. Goenezen, P. E. Barbone, T. J. Hall, A. M. Sommer, and J. Jiang},
{\em Linear and nonlinear elasticity imaging of soft tissue in vivo: Demonstration of feasibility},
Phys. Med. Biol., 54 (2009), pp. 1191--1207.

\bibitem{Parker}
{\sc K. J. Parker,  L. S. Taylor, S. Gracewski, and D. J. Rubens},
{\em A unified view of imaging the elastic properties of tissue}, 
J. Acoust. Soc. Am.,  117 (2005), pp. 2705--2712.


\bibitem{Sinkus06} 
{\sc R. Sinkus, J. Bercoff, M. Tanter, J. L. Gennisson, C. El Khoury, V. Servois, A. Tardivon, and M. Fink}, 
{\em Nonlinear viscoelastic properties of tissue assessed by ultrasound}, 
IEEE Trans. Ultrason., Ferroelectr. Freq. Control, 53 (2006), pp. 
2009--2018.

\bibitem{Sinkus00}
{\sc R. Sinkus, J. Lorenzen, J. Schrader, M. Lorenzen, M. Dargatz, and D. Holz}, 
{\em High-resolution tensor MR elastography for breast tumor detection},
Phys. Med. Biol., 45 (2000), pp. 1649--1664.

\bibitem{Sinkus05} 
{\sc R. Sinkus, M. Tanter, T. Xydeas, S. Catheline, J. Bercoff, and M. Fink},
{\em Viscoelastic shear properties of in vivo breast lesions measured by MR elastography}, 
Magn. Reson. Imaging, 23 (2005), pp. 159--165.


\bibitem{Snieder}
{\sc R. Snieder}, 
{\em General theory of elastic wave scattering}, in Scattering and Inverse Scattering in Pure and Applied Science, R. Pike and P. Sabatier, eds., Academic Press, San Diego, 2002, pp. 528--542.

\bibitem{Tarantola} 
{\sc A. Tarantola}, 
{\em Inverse Problem Theory},
Elsevier, 1987.

\bibitem{tropp2006ass2}
{\sc J. A. Tropp},
{\em Algorithms for simultaneous sparse approximation. {P}art {II}:  Convex relaxation},
Signal Process.,  86 (2006), pp. 589--602.

\bibitem{JVC} 
{\sc A. Wahab and R. Nawaz},
{\em A note on elastic noise source localization}, 
J. Vib. Control, 22 (2016), pp. 1889--1894.


\bibitem{Weglein}
{\sc A. B. Weglein, F. V. Ara\'{u}jo, P. M. Carvalho, R. H. Stolt, K. H. Matson, R. T. Coates, D. Corrigan, D. J. Foster, S. A. Shaw,  and H. Zhang},
{\em Inverse scattering series and seismic exploration}, 
Inverse Probl., 19 (2003), pp. R27--R83.

\bibitem{Wellmann} 
{\sc P. Wellman, R. H. Howe, E. Dalton, and K. A. Kern},
{\em Breast tissue stiffness in compression is correlated to histological
diagnosis}, 
Technical Report, Harvard BioRobotics Laboratory, Division of Engineering and Applied Sciences, Harvard University, 1999.

\bibitem{Widlak}
{\sc T. Widlak and O. Scherzer}, 
{\em Stability in the linearized problem of quantitative elastography},
Inverse Probl., 31 (2015), pp. 035005.

\bibitem{wipf2007ebs}
{\sc D. P. Wipf and B. D. Rao},
{\em An empirical {B}ayesian strategy for solving the simultaneous   sparse approximation problem},
IEEE Trans. Signal Process.,  55 (2007), pp. 3704--3716.

\bibitem{Wipf2011Latent}
{\sc D. P. Wipf, B. D. Rao, and S. Nagarajan},
{\em Latent variable {B}ayesian models for promoting sparsity},
IEEE Trans. Inf. Theory, 57 (2011), pp. 6236--6255.

\bibitem{Helmholtz}
{\sc J. C. Ye and S. Y. Lee}, 
{\em Non-iterative exact inverse scattering using simultaneous orthogonal matching pursuit (S-OMP)}, in Proceedings of  EEE International Conference on Acoustics, Speech and Signal Processing (ICASSP 2008), Las Vegas, NV,  2008, pp. 2457--2460.
 
\end{thebibliography}
\end{document}